\documentclass[a4wide,11pt]{article}
\usepackage[DIV=11]{typearea}
\usepackage{amsmath,amssymb,amsthm}
\usepackage[dvipsnames]{xcolor}
\RequirePackage{aliascnt}
\usepackage[unicode, final, colorlinks=true]{hyperref}
\usepackage{textgreek}
\usepackage{enumitem, booktabs}
\usepackage{xcolor}
\usepackage{siunitx}
\usepackage{mathtools}
\usepackage{tikz, pgfplots}
\usepackage{romanbar}
\usepackage{caption}
\usepackage{float}
\usepackage{bm}
\usepackage[capitalise]{cleveref}
\usepackage[ruled,vlined]{algorithm2e}
\usepackage{subcaption}
\usepackage{placeins}
\usepackage{cite}

\hypersetup{citecolor=teal, linkcolor=BrickRed, urlcolor=NavyBlue}

\pgfplotsset{compat=1.17}
\usepgfplotslibrary{groupplots}
\usepgfplotslibrary{fillbetween}
\usetikzlibrary{backgrounds}
\usetikzlibrary{arrows.meta}
\pgfplotsset{plot coordinates/math parser=false}
\usetikzlibrary{external}
\usepgfplotslibrary{patchplots}
\usetikzlibrary{shapes.geometric}
\usetikzlibrary{backgrounds}
\usetikzlibrary{intersections}
\usetikzlibrary{spy}

\newcommand{\R}{ {\mathbb{R}}}

\newcommand{\e}{ {\varepsilon}}
\newcommand{\g}{ {\gamma}}
\renewcommand{\t}{ {\theta}}
\renewcommand{\r}{ {r}}
\newcommand{\ueb}{ {\bar{u}^{\varepsilon}}}
\newcommand{\ue}{ {u^{\varepsilon}}}
\newcommand{\uen}{ {u_0^{\varepsilon}}}
\newcommand{\uebn}{ {\bar{u}_0^{\varepsilon}}}
\newcommand{\un}{ {{u}^{0}}}
\newcommand{\unn}{ {{u}_0^{0}}}
\newcommand{\opl}{ {+}}
\newcommand{\omin}{ {-}}
\newcommand{\uebm}{ {{u}^{\e -}}}
\newcommand{\uebp}{ {{u}^{\e +}}}
\newcommand{\rotmat}{ {\begin{pmatrix} 0 &-1 \\ 1 &0 \end{pmatrix}}}
\newcommand{\D}{ {D}}

\newcommand{\gn}{ {{\gamma_h^0}}}
\newcommand{\nn}{ {{n_h^0}}}
\newcommand{\nun}{ {u_h^0}}
\newcommand{\nue}{ {\bar{u}_h^\e}}
\newcommand{\nueob}{ {u_h^\e}}
\newcommand{\nv}{ {v_h}}
\newcommand{\nr}{ {r_h}}
\newcommand{\sn}{ \sigma}

\providecommand{\keywords}[1]{\textit{Keywords:} #1}
\providecommand{\msc}[1]{\textit{2020 MSC:} #1}

\newcommand\newsharedtheorem[2]{
    \newaliascnt{#1}{theorem}
    \newtheorem{#1}[#1]{#2}
    \aliascntresetthe{#1}}

\theoremstyle{plain}
\newtheorem{theorem}{Theorem}[section]
\newsharedtheorem{lemma}{Lemma}
\newsharedtheorem{corollary}{Corollary}

\theoremstyle{definition}
\newsharedtheorem{definition}{Definition}
\newsharedtheorem{proposition}{Proposition}
\newsharedtheorem{remark}{Remark}
\newsharedtheorem{hypothesis}{Hypothesis}
\newsharedtheorem{example}{Example}
\newsharedtheorem{assumption}{Assumption}

\begin{document}
\date{
  \small
  Institute of Geometry and Applied Mathematics,\\ RWTH Aachen University, Im S\"usterfeld 2,\\ 52072 Aachen, Germany\\
   \smallskip
   September, 2026
  }

\title{Sensitivity Calculus and its Numerical Implementation for Multi-D Hyperbolic Balance Laws}
\author{Olivia Dre\ss en\thanks{\noindent Corresponding author. E-Mail: dressen@eddy.rwth-aachen.de \\
Contributing authors: herty@igpm.rwth-aachen.de; kolb@eddy.rwth-aachen.de; mueller@igpm.rwth-aachen.de} \and Michael Herty \and Adrian Kolb \and Siegfried M\"uller}

\maketitle

\begin{abstract}
We investigate the sensitivity of solutions to multi-dimensional scalar balance laws with respect to perturbations of the initial data analytically and numerically. Reliable first-order sensitivity information is essential in gradient-based optimization and inverse problems constrained by hyperbolic balance laws. In hyperbolic problems, such perturbations affect both the smooth components of the solution and the locations of shocks. Consequently, classical difference quotients of the solution operator generally fail to converge in $L^1$, even in one space dimension. 
Although generalized tangent-vector techniques have been developed for one-dimensional problems, extending these ideas to multiple space dimensions is considerably more challenging because it requires a geometric description of hypersurfaces of discontinuity.

We represent perturbations of hypersurfaces of discontinuity by normal displacements and account for the induced variation of the normal direction. This representation yields evolution equations for the components of a generalized tangent vector. Based on this calculus, we develop a numerical method for computing first-order variations with respect to the initial data. Numerical experiments in two space dimensions confirm the expected first-order accuracy of the resulting approximation.

  \bigskip
  \noindent \keywords{scalar multi-dimensional balance law, shock sensitivity, generalized tangent vectors, numerical schemes}\\
  \msc{ 35L65, 35L67, 65K10, 49K40, 49K20}
  \end{abstract}

\newpage
\section{Introduction}\label{sec: Intro}

The study of how to solve partial differential equations naturally raises questions about their sensitivity with respect to the data and parameters of the problem. 
We study the sensitivity of solutions to multi-dimensional hyperbolic equations with respect to perturbations of the initial data.
Such sensitivities are particularly relevant in optimization problems constrained by partial differential equations, where first-order variations provide the basis for necessary optimality conditions and, for example, gradient-based numerical methods.

In this paper, we study solutions to scalar equations in multiple space dimensions, possibly including source terms, of the form
\begin{equation*}
    \partial_t u + \nabla_x \cdot f(u) = S(u),
    \qquad x\in\mathbb{R}^d,\quad t>0,
\end{equation*}
together with an initial condition $u(\cdot,0)=u_0(\cdot)$. A fundamental difficulty in the analysis of such equations is the formation of shock discontinuities, which may occur in finite time even for smooth initial data, see, for instance, \cite[Section 3.3]{lV}. Once formed, such discontinuities propagate in time and are not necessarily smoothed out by the dynamics. As a consequence, the solution operator cannot, in general, be linearized in the classical sense. 

This difficulty is already present in the one-dimensional scalar case. In general, the evolution operator
$
\mathcal{S}_t : u_0 \mapsto u(\cdot,t)=\mathcal{S}_t u_0
$
is not differentiable in $L^1$, see \cite[Example 1]{osti_482447}. The reason is that perturbations of the initial data not only change the values of the solution in smooth regions, but may also shift the location of discontinuities. Difference quotients of such shifted jumps fail to converge in $L^1$.

In the \emph{spatially one-dimensional} case, this observation has led to the theory of tangent vectors or shift differentiability for solutions. Analytical and numerical works in this direction include, among many others, \cite{BandaHerty2012aa,BardosPironneau2002aa,Bianchini2000aa,BressanGuerra1995aa,BressanLewicka1999aa,osti_482447,BressanMarson1995aa,CastroPalaciosZuazua2008aa,GilesUlbrich2010aa,JamesSepulveda1999,PierceGiles2004aa,Giles1996aa,GilesSueli2002aa,GugatHertyKlar2006aa,Ulb02,Herty_2021,HZ24,PU15,SU21,AU26,Ulb03}. A key contribution is the tangent vector calculus introduced for spatially one-dimensional systems of balance laws in \cite[Theorems 2.2 and 2.3]{osti_482447}, see also \cite{BressanMarson1995aa}. In this framework, first-order perturbations are represented by variations of the smooth parts of the solution together with shifts of points of discontinuity. This point of view was further developed to prove continuous dependence of the solution operator \cite{BressanCrastaPiccoli2000aa} and extended to $BV$ initial data \cite{BressanGuerra1995aa,Bianchini2000aa}. Furthermore, it led to the notion of shift-differentiability \cite{Bianchini2000aa} as well as to an adjoint calculus \cite{BressanShen2007aa}. For scalar spatially one-dimensional problems, weaker assumptions on the initial data and related optimal control questions have been studied in \cite{Ulb03,CastroPalaciosZuazua2008aa}. The connection with weak formulations is discussed, for example, for the Burgers equation in \cite{BardosPironneau2002aa}. 
The notion of Fr\'echet differentiability has been considered for example in \cite{Ulb02,SU21,AU26,PU15,Ulb03}.
Progress has also been made in computing tangent vectors. For example, \cite{Herty_2021} presents an algorithmic-differentiation framework, whereas \cite{HZ24} treats the discontinuity curve as an interface.
\\

The \emph{spatially multi-dimensional} case is considerably more challenging. While a one-dimensional shock location can be described by a point and its perturbation by a scalar shift, shock locations in multiple spatial dimensions are hypersurfaces. Their first-order variations, therefore, have geometric features such as the displacement of the curve, but also changes of its normal direction.

To the best of our knowledge, the work \cite{LZ16} is the closest contribution in this direction for two spatial dimensions. There, a tangent vector approach is developed for two-dimensional scalar conservation laws and applied in the context of an optimization strategy. The analysis focuses on shock curves without boundary. The present paper complements this approach while remaining consistent with the setting in \cite{LZ16}. We derive a sensitivity calculus from a different perspective and generalize the results, extending the analysis to open shock hypersurfaces, equations with source terms, and more than two space dimensions. By open shock hypersurfaces, we mean shock hypersurfaces whose boundaries may lie in the interior of the domain, rather than being required to connect to the domain boundary. Furthermore, we present a numerical method.

The aim of this work is twofold. First, we derive evolution equations for tangent vector components describing both the perturbation of the smooth solution components and the normal displacement of the shock surface, including the induced variation of the normal direction. Second, we design and implement a numerical method based on this calculus. The numerical experiments compare first-order approximations obtained by the tangent vector and reference solutions for perturbed initial data to confirm the expected first-order accuracy.

This paper is organized as follows. 
In \cref{sec: problem description} we describe the considered class of multi-dimensional scalar balance laws and specify the assumptions. 
\cref{sec: gen tangent vec} introduces the notion of generalized tangent vectors. We first give a formal derivation from perturbations of the initial data and then discuss the evolution of regular variations, which describe the first-order approximations of perturbed solutions.
In \cref{sec: Implementation} we present the numerical methodology for the two-dimensional case. This includes the computation of the unperturbed solution to the balance law, the tangent vector and auxiliary variables.
The numerical results are reported in \cref{sec: num results}. We first discuss the expected accuracy and then consider two test cases, including an open shock curve
 with endpoints in the domain, and a shock curve that meets the domain boundary and separates the domain into two subdomains.
Finally, \cref{sec: conclusion} summarizes the main findings and discusses possible directions for future work.

\FloatBarrier
\section{Description of the Problem}\label{sec: problem description}
For a bounded domain ${\Omega} \subset \R^d$ for $d\ge 2$, we consider the initial value problem for a scalar balance law given by
\begin{equation}\label{eq: IVP}
    \begin{array}{rclcc}
    \partial_t \un + \nabla_x \cdot f(\un) &= &S(\un) & \text{for }x \in \Omega, & t>0 \\
    \un\left( x,0 \right) &= &\unn(x) & \text{for }x \in \Omega.
    \end{array}
\end{equation}
with $\unn:\Omega \to \R$ and source $S \in C^2(\R,\R)$.
Assume $ f \in C^2(\mathbb{R},\mathbb{R}^d)$ and 
\begin{equation*}
    \unn \in \mathcal{U} \coloneqq \left\{u:\Omega \to \R \vert \text{$u$ measurable, } {TV}(u)\leq C, u \text{ piecewise Lipschitz continuous} \right\}
\end{equation*}
as well as suitable boundary conditions.
To investigate the sensitivity of the solution $\un$ with respect to perturbations in the initial data we introduce a perturbed problem.
The perturbed initial data is denoted by $\uen(x)$ and its corresponding solution at time $t$ by $\ue(x,t)$. 
The perturbed initial value problem takes the form 
\begin{equation}\label{eq: perturbed conslaw}
    \begin{array}{rclcc}
    \partial_t \ue + \nabla_x \cdot f(\ue)& = & S(\ue) & \text{for } x \in \Omega, & t>0 \\
    \ue(x,0) & = &\uen(x) & \text{for } x \in \Omega.
    \end{array}
\end{equation}
For both initial-value problems, we consider entropy solutions. The discontinuity hypersurfaces considered below are assumed to be entropy-admissible shocks satisfying the Rankine-Hugoniot condition.
We collect the remaining assumptions below.
\begin{assumption}\label{assumptions}
    ~
    \begin{itemize}
        \item[(a)] Without loss of generality, we assume that the initial datum $\unn$ has only one discontinuity along the hypersurface parametrized by $\g_0^0$ and the perturbed initial data $\uen$ also contains only one discontinuity parametrized by $\g_0^\e$. Furthermore, for $t>0$, we denote the parametrizations by $\g^0=\g^0(\cdot,t)$ or $\g^\e=\g^\e(\cdot,t)$, respectively. We also assume that for all $t \in [0,T)$ for some $T>0$, no additional discontinuities arise in $\un$ and $\ue$. 
        \item[(b)] We choose the spatial parameter domain of $\g^0$ and $\g^\e$ as $\D \coloneqq [0,1]^{d-1}$, i.e. \\ $\g_0^0(\cdot),\g_0^\e(\cdot),\g^0(\cdot,t),\g^\e(\cdot,t): \D \to \R^d$.
        \item[(c)] Additionally, we assume the parametrization to be injective, i.e., there are no self-intersections. Furthermore, we assume $\operatorname{rank}(D_\t \g^0) =d-1$. Lastly, we only consider $\g_0^0 \in C^2(\D,\R^d)$.
        \item[(d)] We assume in the following that $\g^0$ remains $C^2$ for all $t \in [0,T)$ for some $T>0$.
    \end{itemize}
\end{assumption}

Concerning \cref{assumptions} (a): In principle, multiple hypersurfaces of discontinuities that do not intersect at a given time can also be considered one by one. Concerning the choice of the parameter domain $\D$ in (b), we note that for technical reasons concerning the function to be defined in \cref{eq: theta_stern}, we need the image of the parametrization $\g^0$ to be a compact submanifold. This is why we choose a compact set for $\D$. 
We distinguish between two types of hypersurfaces: on the one hand, hypersurfaces separating the domain $\Omega$ into two parts, on the other hand hypersurfaces that do not touch $\partial \Omega$, i.e. end in the domain. 

For hypersurfaces that separate $\Omega$ into two parts, it is common to choose a compact parameter domain. This has been done, for example, in \cite{Maj83}.

For the second class of hypersurfaces, the choice of the parameter domain might not be natural, because the discontinuity vanishes on parts of the boundary, where $\g^0(\t,t)$ involves smooth data of $\un$. For the first-order approximation of the perturbed solution, only the points of $\g^0$ where the jump in $\un$ is non-zero affect the calculus. Therefore, without loss of generality, we include these parts of the boundary in the following theory also in this case. 

\cref{assumptions} (c) is needed for technical reasons, especially in \cref{prop: well defined,prop: well defined time}.
Finally, \cref{assumptions} (d) holds true, provided some stronger regularity assumptions on the initial value problem are imposed:

\begin{proposition}\label{prop: gamma c2}
    ~\\
    Suppose that the preceding assumptions hold. In addition, assume that $\g_0^0 \in C^2(\D,\R^d)$, $u_0^0 \in C^2(\Omega\setminus \operatorname{Im}(\g_0^0),\R)$ away from the discontinuity and $f\in C^3(\R,\R^d)$. Assume that $\un$ does not develop other discontinuities, i.e. $\g^0$ describes the only discontinuity in $\un$ at any considered time. Then, $\g^0(\cdot,t)$ stays $C^2$-regular locally in time. 
\end{proposition}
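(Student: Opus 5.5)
The plan is to recast the shock evolution as a free-boundary problem and invoke the local existence theory for a single shock front, in the spirit of Majda's shock-front stability results \cite{Maj83}. Near the hypersurface we describe the discontinuity locally as a graph. Fix a local chart in which $\operatorname{Im}(\g^0(\cdot,t))$ is $\{x_d=\phi(x',t)\}$; this is possible because $\operatorname{rank}(D_\t\g^0)=d-1$ and $\g^0$ is injective, by \cref{assumptions} (c). On each side of the graph, $\un$ is a classical solution. The unknowns are then the pair of one-sided states $u^\pm$ together with the front $\phi$. They are coupled through the Rankine--Hugoniot condition
\begin{equation*}
  \partial_t\phi\,[\un] = [f(\un)]\cdot(-\nabla_{x'}\phi,1)^T
\end{equation*}
and the Lax entropy condition.

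The steps, in order, are as follows.
\begin{enumerate}
\item Compatibility of the data. The hypotheses $u_0^0\in C^2$ away from the discontinuity and $\g_0^0\in C^2$ give a $C^2$ front and $C^2$ one-sided states at $t=0$. Uniform strict admissibility of the initial jump follows from the Lax condition, since characteristics on both sides impinge on the front.
\item Smooth one-sided solutions. On each side, solve by characteristics $\dot x=f'(u)$, $\dot u=S(u)$. Since $f\in C^3$, the characteristic map is $C^2$ in the initial point, so $u^\pm$ stay $C^2$ up to the front for short times. Here one uses that no other discontinuity forms, which is assumed.
\item Front evolution. The Rankine--Hugoniot condition is a first-order equation for $\phi$ whose coefficients depend on the traces $u^\pm|_{x_d=\phi}$. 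Because the Lax condition makes the front noncharacteristic with incoming characteristics, the traces at a front point are determined by data transported from the initial region. A fixed-point argument then closes the system: for given $\phi$, compute the traces, update $\phi$ by integrating the RH relation, and contract in a $C^1$ norm for small $t$.
\item Regularity upgrade. Differentiating the RH relation twice tangentially shows that $D^2_{x'}\phi$ obeys a linear transport-type equation whose coefficients involve at most second derivatives of $u^\pm$ and third derivatives of $f$. These are continuous, so $\phi(\cdot,t)\in C^2$, and hence $\g^0(\cdot,t)\in C^2$ after reparametrization over $\D$, using compactness of $\D$ and a finite atlas.
\end{enumerate}

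The main obstacle is step 4, together with the closure in step 3. Naively, the RH equation loses a derivative, because $\partial_t\phi$ depends on traces whose own regularity depends on $\phi$. I would first try to exploit the scalar structure. The traces are constant along characteristics, and those characteristics hit the front transversally. The second derivatives of $\phi$ can therefore be expressed through derivatives of the flow map and of $u_0^0$ without derivative loss; this is exactly where $f\in C^3$ enters. If this direct argument fails, I would fall back on citing Majda's result \cite{Maj83}, which gives the front with the regularity of the data, specialized to the scalar case.

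A separate point concerns the boundary of $\D$, where the front is open or degenerates. Because of the convention of \cref{assumptions} (b), the boundary points of $\D$ may carry zero jump, and there the RH relation degenerates to the characteristic equation. In that case I would extend the parametrization smoothly by characteristics so that $C^2$-regularity persists on all of $\D$.
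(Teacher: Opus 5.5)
Stripped to its essentials, your argument is the paper's, which simply invokes the classical theory of characteristics. However, the free-boundary/fixed-point framing hides the one observation that makes it work, and the mechanism you designate to close it would not.

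For a scalar law with Lax shocks nothing is coupled. Backward characteristics from either side do not cross the front, so near the front $\un$ coincides with the classical solutions $U^\pm$ generated by $C^2$ extensions of the two one-sided initial data. These are defined on a full neighbourhood of the front and are $C^2$ for short time; this is where $f\in C^3$ and $S\in C^2$ enter. They do not depend on $\phi$ at all.

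The RH relation is then $\partial_t\phi+\sum_{j<d}c_j\,\partial_{x_j}\phi=c_d$ with the \emph{given} field $c=[f(U)]/[U]=\int_0^1 f'(U^-+s[U])\,ds\in C^2$. Its characteristics are the integral curves of $c$. Hence the front at time $t$ is the image of $\operatorname{Im}(\g_0^0)$ under the $C^2$ flow $X_c(t;\cdot)$, and $\g^0(\t,t):=X_c(t;\g_0^0(\t))$ is a $C^2$ parametrization over all of $\D$, injective and with $\operatorname{rank}(D_\t\g^0)=d-1$. Local uniqueness of entropy solutions identifies this construction with $\un$. This replaces your Steps 3--4 and makes the finite-atlas reparametrization unnecessary. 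The integral form of $c$ also stays $C^2$ where the jump vanishes.

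Several parts of the proposal, as written, do not hold up.
\begin{itemize}
\item The derivative loss you diagnose is a feature of systems, where the one-sided states solve boundary value problems; it does not arise here. Updating $\phi$ by integrating RH in time with the previous iterate's $\nabla_{x'}\phi$ costs a derivative and gives no $C^1$ contraction.
\item Differentiating RH twice in Step 4 is formal unless justified through the characteristic representation above.
\item Majda's theorem is not a usable fallback. It is an $H^s$ result for large $s$, under compatibility conditions, for uniformly stable fronts that are global graphs with jump bounded away from zero. It neither yields $C^2$ from $C^2$ data nor covers open fronts.
\item Strict Lax admissibility is an extra hypothesis, not a consequence of entropy admissibility. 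It can fail where $u\mapsto f(u)\cdot n^0$ is not strictly convex or concave, and it fails where $[\un]=0$. There the domain-of-dependence argument needs separate care, in your proposal as in the paper.
\end{itemize}

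Finally, ``after reparametrization'' is essential. The normally moving parametrization fixed later in \eqref{eq: RH} differs from the one above by a tangential flow whose generator involves $n^0$, which is only $C^1$. It can lose $C^2$ regularity immediately, e.g.\ for a rigidly translating $C^2$ curve whose curvature has a kink at a point where $n^0$ is parallel to the translation velocity.
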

One can prove this proposition using the classical theory of characteristics, see e.g. \cite[Section 3.3]{PDE2015}.
We emphasize that the result of \cref{prop: gamma c2} can, in practice, also be obtained under weaker regularity assumptions, but it shows that \cref{assumptions} (d) can actually be fulfilled.

\FloatBarrier
\section{Generalized Tangent Vectors}\label{sec: gen tangent vec}

It was shown in \cite{osti_482447} that the evolution operator $\mathcal{S}_t : u_0(\cdot ) \to  u(t, \cdot ) = \mathcal{S}_t u_0(\cdot )$, of the balance law, is in general non-differentiable in $L^1$,
even in the one-dimensional scalar case. This means that the limit $\lim\limits_{h \to 0} \left( u^{\e+h}-\ue \right)/h$ does not necessarily define a function in $L^1$. For this reason, a calculus for generalized tangent vectors in the case of one-dimensional systems of balance laws was introduced, see e.g. \cite{PU15,Ulb02,SU21,AU26,Ulb03,osti_482447}.

The section is divided into two subsections. First, in \cref{sec: 3.1} we discuss the approximation of the perturbed initial data. Then, in \cref{sec: 3.2}, we introduce the evolution equations to propagate the components of the approximation of the perturbed solution in time to obtain an approximation also for $t>0$.
\FloatBarrier
\subsection{Formal Derivation of Perturbation of the Initial Data}\label{sec: 3.1}

In this section, we define the pair of functions $\left( v,\r \right)$ with $v \in L^1\left( \R^d,\R \right)$ and $\r \in C^1(D,\R) $ describing the infinitesimal $L^1$-displacement and the infinitesimal displacement of the shock position, respectively.
The space of these pairs is consequently given by $T_{\un} \coloneqq  L^1\left( \R^d,\R \right) \times C^1\left( \D,\R \right)$.
We note that $r$ can also be considered in the weak sense, i.e. $\r \in W^{1,1}(D,\R) $, as we will also do for the numerical experiments in \cref{sec: Implementation} and \cref{sec: num results}.

This pair of functions $\left( v,\r \right)$ refers to the generalized tangent vector in one spatial dimension, introduced in \cite{osti_482447}. Even though for the generalized tangent vector in one dimension, several properties have been proven that are not yet proven for our multi-dimensional approach, we refer to $\left( v,\r \right)$ as generalized tangent vector or just tangent vector in the following.

To approximate the perturbed initial data, we want to construct a first-order variation, i.e. for perturbed initial data $\uen$ and its approximation $\uebn$, we aim for the relation \\\mbox{$ \| \uen(\cdot) - \uebn(\cdot) \|_{L^1(\Omega)} = o(\e)$}.
In the following, we use the notation $\doteq$ to denote equality up to order $o(\e)$, i.e.,
\begin{equation*}
    a^\e(\cdot) \doteq b^\e(\cdot) \Leftrightarrow a^\e(\cdot) = b^\e(\cdot) + o(\e),
\end{equation*}
with respect to the corresponding functionspace. The space is e.g. given by $L^1(\Omega)$ for $\ue$.

In the following we introduce some notations. With regard to investigations below, they already incorporate the time variable.\\
We define the jump in $\ue$ along $\g^\e$ for $\e\ge 0$ as
\begin{equation}\label{eq: jump definition}
\begin{split}
    [\ue]\left( \t,t \right)&\coloneq \lim_{\delta \to 0} \ue\left( \g^\e\left( \t,t \right) + \delta n^\e\left( \t,t \right),t \right) - \ue\left( \g^\e\left( \t,t\right) - \delta n^\e\left( \t,t \right) ,t \right)\\
    &\equiv \ue\left( \g^\e\left( \t,t \right) \opl ,t\right) - \ue\left( \g^\e\left( \t,t \right) \omin ,t\right),
\end{split}
\end{equation}
where $n^\e$ denotes the spatial normal of the hypersurface parametrized by $\g^\e$. Here, $n^0$ corresponds to $\g^0$ and $n^\e$ corresponds to $\g^\e$. 
Without loss of generality, the normal is given by
\begin{equation}\label{eq:definition normal}
    n^\e\left( \t,t \right) = \frac{1}{\sqrt{\operatorname{det}\left( (D_\t\g^\e)^T (D_\t\g^\e) \right)}}
    \bigg( \left( -1 \right)^{i+d} \operatorname{det}\left( (D_\t\g^\e)_{\hat{i}} \right) \bigg)_{i=1}^d,
\end{equation}
for $\e\ge 0$ where $( D_\t\g^\e )_{\hat{i}} \in \R^{(d-1)\times(d-1)}$ describes the submatrix of $D_\t\g^\e$ that results from $D_\t\g^\e \in \R^{d\times(d-1)}$ by removing the $i$-th row.
A more detailed discussion on this can be found in the Appendix \ref{sec: normals Rd}. 
In the two-dimensional case we will refer to the hypersurface parametrized by $\g^\e$ as \textit{curve}. Later, in \cref{sec: 2d reduction}, we discuss the simplifications applying to the two-dimensional case. 
We approximate the perturbed shock surface with scalar displacements $\r$ in the normal direction $n^0$ of $\g^0$:
\begin{equation}\label{eq: gamma eps expansion}
    \g^\e(\t,t) \doteq \g^0(\t,t) + \e \r(\t,t) n^0(\t,t).
\end{equation}
For an illustration in two dimensions, we refer to \cref{fig:illustration_shockpos_shift}, where an unperturbed initial curve $\g_0^0$ (green) and an example of a perturbed initial curve $\g_0^\e$ (blue) are shown. Along the normal of $\g_0^0$, we approximate the distance between the two curves with a first-order variation $ \r_0$. 
The endpoints of the curves coincide in \cref{fig:illustration_shockpos_shift}, but the construction also applies when they do not.

For the normal to the perturbed shock position parametrized by $\g^\e$, we introduce the linear expansion 
\begin{equation}\label{eq: neps expansion}
    n^\e\left( \t,t \right) \doteq n^0\left( \t,t \right) + \e \eta\left( \t,t \right).
\end{equation}
The  first-order term is then calculated by
\begin{equation}\label{eq: def eta}
    \eta(\t,t) = \frac{1}{\sqrt{\operatorname{det}\left( (D_\t\g^0(\t,t))^T (D_\t\g^0(\t,t)) \right)}} H(\t,t) \left( \nabla_\t r(\t,t) \right)
\end{equation}
where $H \in \R^{d \times \left( d-1 \right)}$ is given as
\begin{equation*}
    H \coloneq \Bigg( \left( -1 \right)^{i+d} \left( \operatorname{adj}\left(( D_\t \g^0(\t,t)  )_{\hat{i}}\right) (n^0(\t,t))_{\hat{i}} \right)^T  \Bigg)_{i=1}^{d} 
\end{equation*}

For details on the derivation, we refer again to the Appendix \ref{sec: normals Rd}.

To approximate the shift of the discontinuity in the perturbed initial data, we introduce the projection of a point in the neighborhood of $\g^0$ onto $\g^0$:

\begin{equation}\label{eq: theta_stern}
    \t^*:U_t \times \R_{\ge 0} \to \D, \,\, (y,t) \mapsto\underset{\t \in \D}{\operatorname{argmin}} \|y-\g^0(\t,t) \|,
\end{equation}
for $U_t$ a time-dependent neighborhood of $\g^0$. The map $\t^*$ gives the corresponding parameter to the closest point on the hypersurface $\operatorname{Im}(\g^0(\cdot,t))$, which will be used in the approximation of $\uen$ to assign a jump value in the unperturbed solution to a given point in $\Omega$. 
It is illustrated in \cref{fig:illustration_shockpos_shift} for the initial data at a fixed $\t$.
\begin{proposition}[Well-definedness of $\t^*$]\label{prop: well defined}
    ~\\
    Let $t>0$ be fixed, $\D$ compact and $\g^0(\cdot,t):\D \to \R^d$ an injective $C^2$-function with $\operatorname{rank}(D_\t \g^0) =d-1$. Then, the map $\t^*$ is well-defined in a neighborhood $U_t$ of the hypersurface $\operatorname{Im}(\g^0(\cdot,t))$.
\end{proposition}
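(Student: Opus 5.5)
We prove the statement via the tubular neighborhood (positive reach) property of the compact $C^2$ embedded submanifold with boundary $M_t \coloneqq \operatorname{Im}(\g^0(\cdot,t))$. Well-definedness of $\t^*$ means two things: the minimum in \cref{eq: theta_stern} exists, and it is attained at a unique parameter. Existence is immediate. For fixed $y$, the map $\t \mapsto \|y-\g^0(\t,t)\|$ is continuous on the compact set $\D$, so a minimizer exists for every $y\in\R^d$. Injectivity of $\g^0(\cdot,t)$ turns uniqueness of the closest point on $M_t$ into uniqueness of the parameter $\t$. Moreover, $\g^0(\cdot,t)$ is a continuous injection from a compact set into a Hausdorff space, hence a homeomorphism onto $M_t$. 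Together with $\operatorname{rank}(D_\t\g^0)=d-1$, this makes $M_t$ an embedded $C^2$ submanifold with boundary. It therefore remains to find a neighborhood $U_t$ of $M_t$ in which every point has a unique nearest point on $M_t$.

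For uniqueness we argue by contradiction. Suppose there are points $y_k$ with $\operatorname{dist}(y_k,M_t)\to 0$, each having two distinct nearest parameters $\t_k\neq\t_k'$. Let $p_k=\g^0(\t_k,t)$ and $q_k=\g^0(\t_k',t)$, and set $\rho_k=\|y_k-p_k\|=\|y_k-q_k\|\to 0$. By compactness we pass to a subsequence with $\t_k\to\bar\t$. Since $\|p_k-q_k\|\le 2\rho_k\to 0$ and $\g^0$ is a homeomorphism onto its image, also $\t_k'\to\bar\t$.

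Near $\bar\t$, the rank condition and the $C^2$ bound give a uniform local estimate
\[
\|\g^0(\t)-\g^0(\t')\|\ge c\,|\t-\t'|,
\]
together with a uniform curvature bound $\kappa$ coming from $\|D^2_\t\g^0\|_\infty$. Each of $p_k,q_k$ minimizes the distance to $y_k$ over the ball $\bar B(y_k,\rho_k)$, whose interior contains no point of $M_t$. Whenever a minimizing parameter is interior to $\D$, first-order optimality gives $y_k-p_k\perp T_{p_k}M_t$. The second-order condition, or equivalently a Taylor expansion of $\g^0$ to second order, then shows that two distinct points $p_k,q_k$ at distance $\rho_k$ from a common center are impossible once $\rho_k<1/\kappa$. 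Indeed, Taylor expansion along the segment between $\t_k$ and $\t_k'$ forces the curve $\g^0$ to enter the open ball $B(y_k,\rho_k)$. This is the standard proof that compact $C^2$ submanifolds have positive reach, as in Federer.

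The main obstacle is the boundary $\partial\D$, where $\t^*$ may land at parameters with $\t_k\in\partial\D$. There the orthogonality $y-p\perp T_pM_t$ fails, and the minimizer only satisfies a variational inequality: $-(y-p)\cdot D_\t\g^0\,\xi\le 0$ for admissible inward directions $\xi$. We handle this by first extending $\g^0(\cdot,t)$ to a $C^2$ map on a slightly larger open box $\tilde\D\supset\D$, using a Whitney/reflection extension. On a small enough enlargement the extension remains an injective immersion, by the same compactness argument combined with the local injectivity estimate. The argument above then applies verbatim to the compact image of a closed box between $\D$ and $\tilde\D$ and gives uniqueness near every point.

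To transfer this back to the original surface, we rely on the uniform local convexity of the distance function. On $\bar B(\bar\t,\delta)\cap\D$, the function $\t\mapsto\frac12\|y-\g^0(\t)\|^2$ has Hessian
\[
(D_\t\g^0)^TD_\t\g^0-(y-\g^0)\cdot D^2_\t\g^0,
\]
which is uniformly positive definite when $\|y-\g^0\|$ is small. So this function is strictly convex on the convex set $\bar B(\bar\t,\delta)\cap\D$, and a strictly convex function has a unique minimizer on a convex set. This contradicts $\t_k\ne\t_k'$ for large $k$, and we obtain $\epsilon_t>0$ such that $U_t=\{y:\operatorname{dist}(y,M_t)<\epsilon_t\}$ works. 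This strict-convexity route is the one we will actually carry out, since it treats boundary and interior points uniformly and avoids the extension argument.
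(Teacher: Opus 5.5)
Your proof is correct, but it takes a different route from the paper. The paper's proof is a citation. By Foote, a compact $C^2$ submanifold has a neighborhood with the unique nearest point property. $\operatorname{Im}(\g^0(\cdot,t))$ qualifies because $\g^0(\cdot,t)$ is an injective $C^2$ map of rank $d-1$ on the compact set $\D$, and injectivity turns the unique nearest point into a unique parameter.

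You instead argue directly in parameter space. A minimizer exists by compactness. If uniqueness failed along points $y_k$ approaching the surface, the two minimizing parameters $\t_k\neq\t_k'$ would both converge to the same $\bar\t$, because $\g^0(\cdot,t)$ is a homeomorphism onto its image. That contradicts strict convexity of $\t\mapsto\frac12\|y_k-\g^0(\t,t)\|^2$ on the convex set $\bar B(\bar\t,\delta)\cap\D$.

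What your route buys is that it treats parameters on $\partial\D$ exactly like interior ones. The image of $[0,1]^{d-1}$ has boundary (and corners), and a nearest point may lie there. At such a point the normality condition $y-p\perp T_pM_t$, on which the tubular-neighbourhood argument behind the cited result rests, is unavailable. The paper's one-line citation does not address this case; your argument settles it in an elementary way and yields an explicit uniform tube.

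It also makes visible what is really used: convexity of $\D=[0,1]^{d-1}$ from Assumption (b), not just the compactness the proposition mentions. For an L-shaped parameter domain embedded flat in $\R^3$, points near the re-entrant corner have two nearest points, so the claim fails there. The paper's route, in turn, comes with regularity of the projection (in the boundaryless case), which yours does not provide, though the statement does not need it.

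When you write it up, two details:
\begin{itemize}
\item Choose $\delta$ so that $\sup_{\t\in\bar B(\bar\t,\delta)\cap\D}\|y_k-\g^0(\t,t)\|<\lambda_{\min}/\|D_\t^2\g^0\|_\infty$ for large $k$. Here $\lambda_{\min}$ bounds the eigenvalues of $(D_\t\g^0)^TD_\t\g^0$ from below near $\bar\t$. Positivity of the Hessian is needed along the whole segment from $\t_k$ to $\t_k'$, not only at the two minimizers.
\item Drop the extension detour. By itself it would not help, because the unique nearest point property is not inherited by subsets.
\end{itemize}
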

\begin{proof}
    From differential geometry, see e.g., \cite{Foo84}, a compact $C^2$-submanifold has a neighborhood with the unique nearest point property. The image of $\g^0$ is a $C^2$-submanifold, since $\g^0$ is injective and $C^2$ and it holds $\operatorname{rank}(D_\t \g^0) =d-1$ by \cref{assumptions}, see e.g. \cite{Hir76}. Moreover, its image is compact since the domain $\D$ is compact. Therefore, there exists a neighborhood $U_t$ in which the unique nearest point property is fulfilled. Since $\g^0(\cdot,t)$ is injective, the nearest point corresponds to a unique parameter in the domain $\D$.
\end{proof}
The well-definedness is restricted to the neighborhood in which the unique nearest point property holds. Therefore, we apply $\t^*$ only within this neighborhood.

Let $\e$ be sufficiently small and $\left( v_0,\r_0  \right) \in T_{\unn}$. Then, we approximate the perturbed initial data $\uen$ by 
\begin{equation}\label{eq:approx}
    \uebn(x)  \coloneqq  \unn(x) + \e v_0(x) - \operatorname{sign}(\r_0\left( \t^*(x,0) \right)) [\unn]\left( \t^*(x,0) \right)\chi_{M_0}(x) \doteq \uen(x),
\end{equation}
where $M_0 \coloneqq  \left\{\g_0^0(\t) + \e \r_0( \t) n^0(\t,0) \alpha :\alpha \in  [0,1], \, \t \in \D \right\}$.\\

\begin{figure}
      \centering
  \scalebox{1}{
    \begin{tikzpicture}

\fill[yellow!50]
    plot[domain=-0.25*pi:0.75*pi, samples=100, variable=\t]
      ({2*cos(\t r)}, {2*sin(\t r)})
    --
    plot[domain=0.75*pi:-0.25*pi, samples=100, variable=\t]
      ({(2+0.8*(-4/(pi^2))*(\t+0.25*pi)*(\t-0.75*pi))*cos(\t r)},
       {(2+0.8*(-4/(pi^2))*(\t+0.25*pi)*(\t-0.75*pi))*sin(\t r)})
    -- cycle;

  \draw[thick,Green]
    plot[domain=-0.25*pi:0.75*pi, samples=100, variable=\t]
      ({2*cos(\t r)}, {2*sin(\t r)});

      \draw[thick,blue]
    plot[domain=-0.25*pi:0.75*pi, samples=100, variable=\t]
      ({(2+0.8*(-4/(pi^2))*(\t+0.25*pi)*(\t-0.75*pi))*cos(\t r)}, {(2+0.8*(-4/(pi^2))*(\t+0.25*pi)*(\t-0.75*pi))*sin(\t r)});

      \node[inner sep=0pt]
    (A) at ({2*cos(0.25*pi r)},{2*sin(0.25*pi r)}) {};

    \node[inner sep=0pt]
    (A_kl) at ({2*cos(0.25*pi r)+0.1},{2*sin(0.25*pi r)-0.1}) {};

    \node[inner sep=0pt,label=above right:\textcolor{Green}{$n^0$}] (B) at ({2*cos(0.25*pi r)+ cos(0.25*pi r)*0.8*(-4/(pi^2))*(0.25*pi+0.25*pi)*(0.25*pi-0.75*pi)},{2*sin(0.25*pi r)+sin(0.25*pi r)*0.8*(-4/(pi^2))*(0.25*pi+0.25*pi)*(0.25*pi-0.75*pi)}) {};

    \node[inner sep=0pt] (B_kl) at ({2*cos(0.25*pi r)+ cos(0.25*pi r)*0.8*(-4/(pi^2))*(0.25*pi+0.25*pi)*(0.25*pi-0.75*pi)+0.1},{2*sin(0.25*pi r)+sin(0.25*pi r)*0.8*(-4/(pi^2))*(0.25*pi+0.25*pi)*(0.25*pi-0.75*pi)-0.1}) {};

    \node[inner sep=0pt] (B2) at ({2*cos(0.25*pi r)+ cos(0.25*pi r)*0.8*(-4/(pi^2))*(0.25*pi+0.25*pi)*(0.25*pi-0.75*pi)-0.2},{2*sin(0.25*pi r)+sin(0.25*pi r)*0.8*(-4/(pi^2))*(0.25*pi+0.25*pi)*(0.25*pi-0.75*pi)}) {};

    \node[inner sep=0pt] (B3) at ({2*cos(0.25*pi r)+ cos(0.25*pi r)*0.8*(-4/(pi^2))*(0.25*pi+0.25*pi)*(0.25*pi-0.75*pi)},{2*sin(0.25*pi r)+sin(0.25*pi r)*0.8*(-4/(pi^2))*(0.25*pi+0.25*pi)*(0.25*pi-0.75*pi)-0.2}) {};

      \draw[thick,Green]
    (A) -- (B);

    \draw[thick,Green]
    (B) -- (B2);

    \draw[thick,Green]
    (B) -- (B3);

    \draw[decorate,decoration={brace,amplitude=4pt},
        Orange,thick] (B_kl) -- (A_kl);

        \node[inner sep=0pt,label={[label distance=2pt,rotate=45]below:\textcolor{Orange}{$\hspace{-0.1cm}\doteq \hspace{-0.1cm}\e r_{\scalebox{0.6}{\hspace{-0.1cm}$0$}}$}}] (B_kl) at ({2*cos(0.25*pi r)+ 0.5* cos(0.25*pi r)*0.8*(-4/(pi^2))*(0.25*pi+0.25*pi)*(0.25*pi-0.75*pi)+0.1},{2*sin(0.25*pi r)+0.5*sin(0.25*pi r)*0.8*(-4/(pi^2))*(0.25*pi+0.25*pi)*(0.25*pi-0.75*pi)-0.1}) {};

          \node[circle,draw,inner sep=1pt,fill,magenta,label={above:\textcolor{Magenta}{$y$}}] (y) at ({2*cos(0.5*pi r)- cos(0.5*pi r)*0.8*(-4/(pi^2))*(0.5*pi+0.25*pi)*(0.5*pi-0.75*pi)},{2*sin(0.5*pi r)+sin(0.5*pi r)*0.8*(-4/(pi^2))*(0.5*pi+0.25*pi)*(0.5*pi-0.75*pi)}) {};

          \node[circle,draw,inner sep=1pt,fill,magenta,label={below:\textcolor{Magenta}{$\g_0^0\!\bigl(\t^*(y,0)\bigr)$}}]
    (stern) at ({2*cos(0.5*pi r)},{2*sin(0.5*pi r)}) {};

    \draw[thick,Magenta]
    (y) -- (stern);

\node[inner sep=0pt,label={above:\textcolor{Green}{$\g_0^0$}}]
    (g1) at ({2*cos(-0.25*pi r)},{2*sin(-0.25*pi r)}) {};
    \node[inner sep=0pt,label={[label distance=3pt]right:\textcolor{blue}{$\g_0^\e$}}]
    (g2) at ({2*cos(-0.25*pi r)},{2*sin(-0.25*pi r)}) {};

    \node[inner sep=0pt,label={above:\colorbox{yellow!50}{$M_0$}}]
    (M) at ({-1},{-1}) {};

\end{tikzpicture}}
  \captionof{figure}{Illustration of the shift in the shock position for the initial data in two dimensions, following \cref{eq: gamma eps expansion}. The shock position described by $\g_0^0$ (green) is shifted along the normal by an amount of $\e \r_0$ (orange). This approximates the perturbed curve $\g_0^\e$, where the correspondence between the neighborhood $M_0$ (see \cref{eq:approx}) and the unperturbed shock curve is formalized by the projection $\t^*$ defined in \cref{eq: theta_stern}.}\label{fig:illustration_shockpos_shift}
\end{figure}

\FloatBarrier
In the definition of the set $M_0$, we use the line-segment parametrization between $\g_0^0(\t)$ and $\g_0^0(\t) + \e \r_0( \t) n^0(\t,0)\doteq \g_0^\e$. Since we consider the whole hypersurface $\operatorname{Im}(\g_0^0)$, we include the line-segment parametrization for every $\t \in \D$. In \cref{fig:illustration_shockpos_shift} we see an example for this in the two-dimensional setting.

Along each normal, we take the value of $[\unn]$ corresponding to the point on the curve from which the normal vector emanates. Therefore, we use the projection $\t^*$ which is the parameter corresponding to that point on the curve. 
As we see in \cref{eq:approx}, the projection $\t^*(y,0)$ is only relevant if $y \in M_0$. The neighborhood of the hypersurface in which the well-definedness is valid, see \cref{prop: well defined}, should include the set $M_0$. This is guaranteed for sufficiently small $\e$ but strongly depends on the geometry of the hypersurface parametrized by $\g^0$ and therefore the underlying problem. Moreover, the well-definedness is needed, to assign to every point in $M_0$ a unique point on the hypersurface parametrized by $\g_0^0$.

Finally, we need to know the sign of $\r_0$, to determine wether the shock should be shifted along the normal or in the opposite direction. Depending on the sign, we add or subtract the value $[\unn](\t^*(x,0))$.

The $L^1$ infinitesimal displacement for the solution is described by $\unn(x) + \e v_0(x)$ in \cref{eq:approx}. Note that the $L^1$ infinitesimal displacement is not illustrated in \cref{fig:illustration_shockpos_shift}.\\

We assume there exists a generalized tangent vector for the initial data, i.e., for $t=0$. 
We denote this by $\left( v_0,\r_0 \right)$. 
The main goal in the following section is to derive evolution equations that describe the time evolution of the tangent vector $\left( v,r \right)$.

\FloatBarrier
\subsection{Regular Variation}\label{sec: 3.2}

We assume that $\un \in  L^1\left( \R^d,\R \right)$ is piecewise Lipschitz continuous with one discontinuity surface $\operatorname{Im}(\g^0(\cdot,t))$. 
Consider $\Sigma_{\un}$ as the family of continuous paths $\Gamma: [0,\e_0] \to L^1\left( \R^d,\R \right)$ with $\Gamma(0)=\un$ and $\e_0$ possibly depending on $\Gamma$, such that $\t^*$ is well-defined. This notion is consistent with restrictions in the one-dimensional calculus presented in \cite{osti_482447}.
\begin{definition}[Regular variation]\label{def:reg vari}
    The space of generalized tangent vectors to a piecewise Lipschitz continuous function $\un$ with a discontinuity at $\operatorname{Im}(\g^0(\cdot,t))$ for $\t \in  \D$ is given by $T_{\un}  \coloneqq  L^1\left( \R^d,\R \right) \times  C^1\left( \D,\R \right)$.
    A continuous path $\Gamma \in  \Sigma_{\un}$ generates a tangent vector $\left( v,\r \right) \in  T_{\un}$ if 
    \begin{equation}
        \lim_{\e \to 0} \frac{1}{\e} \|\Gamma(\e) - \bar{\Gamma}(\e) \|_{L^1} =0
    \end{equation}
    for 
    \begin{equation}\label{eq: reg vari}
    \begin{split}
        \bar{\Gamma}(\e) \equiv \ueb(x,t)  \coloneqq  \un(x,t) + \e v(x,t) -\operatorname{sign}\left( \r\left( \t^*(x,t),t \right) \right) [\un](\t^*(x,t),t)\chi_{M(t)}(x),
    \end{split}
    \end{equation}
    with 
    \begin{equation}
        M(t)= \left\{\g^0(\t,t) + \e \r( \t,t) n^0(\t,t) \alpha : \alpha \in  [0,1], \, \t \in \D \right\}
    \end{equation}
    and $\t^*$ as defined in \cref{eq: theta_stern}.\\
    Let $\un$ be a piecewise Lipschitz continuous function with non-interacting discontinuities. 
    Then, a path $\Gamma \in  \Sigma_{\un}$ is a regular variation for $\un$ if additionally all functions $\Gamma(\e) = \ue$ are piecewise Lipschitz with non-interacting discontinuities and the location of the jump at $\operatorname{Im}(\g^\e(\cdot,t))$ depends continuously on $\e$.
\end{definition}
\cref{def:reg vari} ensures that the approximations resulting from the regular variation are of first-order.
Note that the set $M(t)$ depends on $t$, which needs to be taken into account for the well-definedness of $\t^*$:
\begin{proposition}[Well-definedness of $\t^*$ in time evolution]\label{prop: well defined time}
    ~\\
    Under the assumptions of \cref{prop: well defined} for $t\in [0,T_2)$ and assumption of the well-definedness of $\t^*$ on $M_0$, the well-definedness of $\t^*$ on $M(t)$ holds true locally in time for $t\in [0,T_1)$, $T_1\le T_2$. 
\end{proposition}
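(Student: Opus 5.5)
The plan is a continuity argument in time. By \cref{prop: well defined}, for every $t\in[0,T_2)$ there is an open neighborhood $U_t$ of $\operatorname{Im}(\g^0(\cdot,t))$ with the unique nearest point property. I will make this quantitative with a tubular radius $\rho(t)>0$. Following the standard construction in \cite{Foo84}, I will take $\rho(t)$ to be a reach-type quantity bounded below in terms of
\begin{itemize}
\item the $C^2$-norm of $\g^0(\cdot,t)$ (curvature bound),
\item a lower bound on the smallest singular value of $D_\t\g^0(\cdot,t)$ (non-degeneracy),
\item an injectivity modulus of the form $\inf_{|\t-\t'|\ge\delta}\|\g^0(\t,t)-\g^0(\t',t)\|$.
\end{itemize}
Points within distance $\rho(t)$ of the surface then have a unique nearest point, with the boundary of $\D$ handled as in the compact-submanifold case already covered by \cref{prop: well defined}. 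By \cref{assumptions} (c),(d) and \cref{prop: gamma c2}, $t\mapsto\g^0(\cdot,t)$ is continuous into $C^2(\D,\R^d)$ on $[0,T_2)$. All three quantities depend continuously on $t$ and are positive, so $\rho(t)\ge\rho_*>0$ on any compact subinterval $[0,T']$ with $T'<T_2$.

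Next I control the size of $M(t)$. Every point of $M(t)$ has the form $\g^0(\t,t)+\e\alpha\r(\t,t)n^0(\t,t)$. Its distance to $\operatorname{Im}(\g^0(\cdot,t))$ is therefore at most $\e\,\|\r(\cdot,t)\|_{\infty}$, using $|n^0|=1$. The assumption that $\t^*$ is well-defined on $M_0$ gives $\e\|\r_0\|_\infty<\rho(0)$. I use that $\r(\cdot,t)\in C^1(\D)$ evolves continuously in time, at least in sup-norm, from $\r_0$; this is the natural regularity assumption implicit in $T_{\un}$. Hence $t\mapsto\e\|\r(\cdot,t)\|_\infty$ is continuous. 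Strict inequality at $t=0$ and continuity of both sides then give $T_1\in(0,T_2]$ such that $\e\|\r(\cdot,t)\|_\infty<\rho(t)$ for $t\in[0,T_1)$. On that interval $M(t)\subset U_t$, so $\t^*(\cdot,t)$ is well-defined on $M(t)$. Injectivity of $\g^0(\cdot,t)$ again turns the unique nearest point into a unique parameter in $\D$.

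The main obstacle is the first step. One needs a tubular radius that depends continuously, or at least lower semicontinuously, on $t$; \cref{prop: well defined} only gives existence at each fixed time. I would argue it by contradiction. Suppose $t_k\to t$ with $\rho(t_k)\to0$. Then there are points $y_k$ at distance $\to0$ from the surface with two distinct nearest parameters $\t_k\neq\t_k'$. Using compactness of $\D$, pass to subsequences. If $\t_k,\t_k'$ have distinct limits, injectivity of $\g^0(\cdot,t)$ together with $C^0$-convergence gives a contradiction. If they share a limit, the first-order condition $D_\t\g^0(\t_k,t_k)^T(y_k-\g^0(\t_k,t_k))=0$, its counterpart at $\t_k'$, uniform $C^2$ bounds and the full-rank condition give a contradiction via the inverse function theorem applied to the normal map $(\t,s)\mapsto\g^0(\t,t)+s\,\nu$ near the zero section. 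A secondary point is continuity in time of $\r$. If the evolution equations for $\r$ in the paper are not yet established at this stage, I will state this as an explicit hypothesis, namely local boundedness of $\r$ in time, which is all the argument uses.
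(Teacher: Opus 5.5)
You follow the paper's route. The paper justifies \cref{prop: well defined time} in one sentence: it follows from continuity in $t$ of the functions defining $M(t)$, with continuity of $\r$ taken from \cref{prop: regularity sol}. Cite that proposition instead of adding your own hypothesis on $\r$. You correctly identify what that sentence hides, namely a tube radius that is uniform in $t$. However, your continuity argument cannot start as written. Well-definedness of $\t^*$ on $M_0$ does \emph{not} give $\e\|\r_0\|_\infty<\rho(0)$. It is a pointwise property of the points of $M_0$, whereas $\rho(0)$ is a uniform radius (yours is even only a lower bound for the reach). In particular, $M_0$ may end exactly at a focal point.

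Here is an example. Take $\g_0^0(\t)=(\t-\frac12,\frac12(\t-\frac12)^2)$, whose normal \cref{eq: normal 2d} points upward, and $\e\r_0\equiv1$.
The point $(0,1)\in M_0$ has the unique nearest point $(0,0)$, since $\|(x_1,x_1^2/2)-(0,1)\|^2=1+x_1^4/4$. One checks that $\t^*$ is well defined on all of $M_0$, because the normal from $x_1\neq0$ meets the $x_2$-axis only after length $\sqrt{1+x_1^2}>1$.
Yet every $(0,1+\delta)$ with $0<\delta\le\frac18$ has two nearest points, at $x_1=\pm\sqrt{2\delta}$.
A stationary curve with $\r(\frac12,\cdot)$ increasing satisfies every continuity property you use, and it puts such points into $M(t)$ for all small $t>0$.

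Because the set of points with a unique nearest point is not open, you need a margin as an explicit hypothesis, e.g.\ $\e\|\r_0\|_\infty<\rho(0)$. This is what the paper's standing ``$\e$ sufficiently small'' tacitly provides. With it your argument works. Your bounds then even give well-definedness on all of $[0,T']$ once $\e\sup_{[0,T']}\|\r(\cdot,t)\|_\infty<\rho_*$.

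There is a second gap. Your semicontinuity argument for $\rho$ uses the interior stationarity condition $D_\t\g^0(\t_k,t_k)^T(y_k-\g^0(\t_k,t_k))=0$ at both nearest parameters. But $\D=[0,1]^{d-1}$ is closed, and $M(t)$ contains normals emanating from $\g^0(\partial\D,t)$. So nearest parameters may lie on $\partial\D$, where only a one-sided condition holds. Then $y_k$ need not lie on a normal line, and the inverse-function argument for $(\t,s)\mapsto\g^0(\t,t)+s\,n^0(\t,t)$ does not apply. Deferring to \cref{prop: well defined} does not close this, since it is a fixed-time statement with no uniformity in $t$.

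A repair is to prove only what you actually use. Namely, for $\t\in\D$ and $|s|$ below a radius that is uniform for $t$ in compact intervals, the foot $\g^0(\t,t)$ is the unique nearest point of $\g^0(\t,t)+s\,n^0(\t,t)$.
To show this, extend $\g^0(\cdot,t)$ by a bounded linear $C^2$ extension operator to a slightly larger cube. There, injectivity and the rank condition persist uniformly for $t$ in compact intervals.
On the extended surface every nearest parameter is $O(|s|)$-close to $\t$, hence interior, so your stationarity argument applies.
Uniqueness on the extended surface then implies uniqueness on $\operatorname{Im}(\g^0(\cdot,t))$, because the foot belongs to it.
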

The last proposition follows from the continuity of the functions defining $M(t)$, where we discuss the continuity of $\r$ in \cref{prop: regularity sol}.\\
For an illustration of the expansion \cref{eq: reg vari} in two dimensions, we refer to \cref{fig:illustration_shockpos_shift} and the corresponding explanations that refer to the initial data. The only change that is made for \cref{eq: reg vari} is the time-dependency of the components.\\

The following theorem provides evolution equations for the tangent vector components. 

\begin{theorem}\label{theorem}
    Let $\un(\cdot ,\cdot )$ be a piecewise Lipschitz continuous solution to the initial value problem \cref{eq: IVP} with initial data $\un(x,0)=\unn(x)$ that is piecewise Lipschitz with only one shock hypersurface at $\operatorname{Im}(\g^0)$ with $\g^0(\cdot,t) \in C^2(\D,\R^d)$.
    Let $\left( v_0,\r_0 \right) \in  T_{\unn} $ be a tangent vector to $\unn$ generated by the regular variation $\Gamma_0$ with $\Gamma_0(\e) = \uen$.
    Let $\ue(x,t)$ be the solution to the initial value problem \cref{eq: perturbed conslaw} with perturbed initial data $\ue(x,0)=\uen(x)$. We assume that the regular variation for $\ue$ exists for the tangent vector $\left( v,r \right)$ for $t>0$.
    \\
    Then, the evolution equations for $\left( v,r \right)$ are given by the following initial value problems 
    \begin{equation}\label{eq:evolution v}
        \partial_t v + \nabla_x \cdot \left( f'(\un) v \right) =  S'(\un)  v, \,\, v(0,\cdot )=v_0(\cdot ) 
    \end{equation}
    \textbf{away} from the discontinuity of $\un$ and 
    \begin{equation}\label{eq:evolution r}
    \begin{split}
            {\partial_t \r(\t,t)} +  g(\t,t) \cdot (\nabla_\t\r(\t,t))  = \tilde{g}(\t,t)  {\r(\t,t)} + \hat{g}(\t,t),  \quad  {r}(\cdot,0 )= {r_0}(\cdot ),
    \end{split}
\end{equation}
    \textbf{along} the discontinuity $\g^0\left( \t,t \right)$ of $\un$.\\
    The flux and source terms are given by 
    \begin{equation}\label{eq:evolution r: functions}
    \begin{split}
        g& \coloneqq - \frac{1}{[\un] \sqrt{\operatorname{det}\left( (D_\t\g^0)^T (D_\t\g^0) \right)} } [f(\un)]^T H , \\
        \tilde{g}& \coloneqq  \frac{n^0}{[\un]^2} \cdot \Big( [\un]\left[ f'(\un) \left( \nabla_x \un \cdot n^0 \right) \right] - \left[ f(\un)\right] \left( [\nabla_x \un] \cdot n^0 \right) \Big)\\
        \hat{g}& \coloneqq  \frac{n^0}{[\un]^2} \cdot \left( [\un] [f'(\un) {v}] - [f(\un)] [{v}] \right).
    \end{split}
    \end{equation}
\end{theorem}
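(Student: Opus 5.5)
The proof has two parts: linearize the equation in the smooth regions to get \cref{eq:evolution v}, and linearize the Rankine--Hugoniot condition along the shock to get \cref{eq:evolution r}. Throughout we use the regular variation \cref{eq: reg vari} and discard all terms that are $o(\e)$ in $L^1$.

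\textbf{Smooth regions.} Fix a compact set $K$ in space-time that stays at positive distance from $\operatorname{Im}(\g^0(\cdot,t))$. For $\e$ small, $K$ does not meet $M(t)$. The indicator term in \cref{eq: reg vari} therefore vanishes on $K$, and $\ue\doteq \un+\e v$ there. We insert this into the weak form of \cref{eq: perturbed conslaw} with test functions supported in $K$, expand $f(\un+\e v)=f(\un)+\e f'(\un)v+O(\e^2)$ and likewise for $S$, and subtract the equation \cref{eq: IVP} for $\un$. Dividing by $\e$ and letting $\e\to0$ yields \cref{eq:evolution v} in the weak sense on $K$. The initial condition comes from the fact that $\Gamma_0$ generates $(v_0,\r_0)$. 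Since $f,S\in C^2$, the remainders are uniformly $O(\e^2)$, because $\un$ and $v$ are bounded on $K$.

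\textbf{Shock.} Next we linearize the Rankine--Hugoniot condition. For a moving hypersurface we write it as $[u]\,\partial_t\g\cdot n=[f(u)]\cdot n$, so for each $\e$
\[
[\ue]\,\partial_t\g^\e\cdot n^\e=[f(\ue)]\cdot n^\e ,
\]
evaluated at $\g^\e(\t,t)$. We expand each factor to first order in $\e$.

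\begin{itemize}
\item Shock position: from \cref{eq: gamma eps expansion}, $\g^\e\doteq\g^0+\e \r n^0$, so $\partial_t\g^\e\doteq\partial_t\g^0+\e(\partial_t\r\, n^0+\r\,\partial_t n^0)$.
\item Normal: from \cref{eq: neps expansion} and \cref{eq: def eta}, $n^\e\doteq n^0+\e\eta$ with $\eta=H\nabla_\t\r/\sqrt{\det((D_\t\g^0)^TD_\t\g^0)}$, which is linear in $\nabla_\t\r$.
\item Traces: the traces of $\ue$ at the displaced point are $u^\pm(\g^0+\e\r n^0)+\e v^\pm$, which to first order equals $u^\pm+\e\big(\r\,\nabla_x u^\pm\cdot n^0+v^\pm\big)$. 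This uses that $\un$ is Lipschitz on each side up to the shock and that \cref{eq:evolution v} holds up to the boundary from each side. Consequently $[\ue]\doteq[\un]+\e(\r[\nabla_x\un\cdot n^0]+[v])$ and $[f(\ue)]\doteq[f(\un)]+\e[f'(\un)(\r\nabla_x\un\cdot n^0+v)]$.
\end{itemize}

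We substitute these expansions, cancel the $\e^0$ terms using the unperturbed Rankine--Hugoniot condition, and keep the $O(\e)$ terms. The $\e^0$ condition gives $s:=\partial_t\g^0\cdot n^0=[f(\un)]\cdot n^0/[\un]$. We use $n^0\cdot n^0=1$, which gives $n^0\cdot\eta=0$ and $n^0\cdot\partial_tn^0=0$. We also use the freedom in the tangential parametrization: only the normal component $\partial_t\g^\e\cdot n^\e$ is meaningful, so we fix the tangential velocity of $\g^\e$ so that the term $\partial_t\g^0\cdot\eta$ is absorbed, which amounts to taking $\partial_t\g^0$ normal, or equivalently regarding the shock as a graph over $\g^0$. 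The resulting linear identity is then
\[
[\un]\,\partial_t\r=[f(\un)]\cdot\eta-s\big(\r[\nabla_x\un]\cdot n^0+[v]\big)+n^0\cdot\big(\r[f'(\un)\nabla_x\un\cdot n^0]+[f'(\un)v]\big).
\]
Dividing by $[\un]$ and replacing $s$ by its expression gives exactly $g\cdot\nabla_\t\r$ from the $\eta$-term, together with $\tilde g\,\r$ and $\hat g$ as in \cref{eq:evolution r: functions}.

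\textbf{Main obstacle.} The hard part is justifying that the derivative of the parametrized position equals $\partial_t\r\,n^0$ plus terms accounted for by $\eta$. This is the tangential reparametrization issue above. I would handle it by working with the signed-distance or graph representation over $\g^0$: using $\t^*$, which is well defined by \cref{prop: well defined time}, the perturbed surface is written as $\g^0(\t,t)+\e\tilde \r(\t,t)n^0(\t,t)$ with $\tilde\r=\r+o(1)$, and the Rankine--Hugoniot condition is applied in that frame. A second, delicate point is making rigorous that the expansions hold pointwise on the shock rather than only in $L^1$. I would obtain this by testing the weak formulation against functions concentrated in thin tubes around $\g^0$, which reduces the $L^1$ estimate from the definition of a regular variation to an identity of surface integrals.
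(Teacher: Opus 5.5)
Your proposal is correct and is essentially the paper's proof. You linearize the equation away from the shock to get $v$, and you linearize the Rankine--Hugoniot condition with the same expansions: traces $u^{\pm}+\e(\r\nabla_x u^{\pm}\cdot n^0+v^{\pm})$, and $n^\e\doteq n^0+\e\eta$ together with $n^0\cdot\eta=0$ and $n^0\cdot\partial_t n^0=0$. The only difference is algebraic: the paper expands the quotient $s^\e\doteq s^0+\e\lambda$ and reads off $\partial_t\r=\lambda$ from $\r\doteq\e^{-1}(\g^\e-\g^0)\cdot n^0$, while you expand the product form $[\ue]\,\partial_t\g^\e\cdot n^\e=[f(\ue)]\cdot n^\e$ directly, which uses only the normal velocity of $\g^\e$. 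One statement needs correcting: $\partial_t\g^0\cdot\eta$ vanishes because of the paper's choice $\partial_t\g^0=s^0n^0$ in \cref{eq: RH}. That is a condition on the time parametrization of $\g^0$, not of $\g^\e$, and it is needed in addition to the graph representation $\g^\e\doteq\g^0+\e\r n^0$, not equivalent to it.
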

\cref{theorem} extends to finitely many non-intersecting shock hypersurfaces.
In this case, we have an evolution equation for the shift of each hypersurface. Furthermore, for the practical usage in the numerical discretization in \cref{sec: Implementation,sec: num results}, it is convenient to transform the evolution equation for $\r$ into a conservative form, see \cref{cor:conservative r eq}.

The solution of \cref{eq:evolution v} is understood as a broad solution, i.e., almost every point in space-time is the starting point of a characteristic determining the solution, see \cref{broad solution}.

Note that the evolution equation for the shock position shift \cref{eq:evolution r} is a scalar, spatially $(d-1)$-dimensional transport equation with source terms and in particular a partial differential equation instead of an ordinary differential equation as in the one-dimensional case presented in \cite{osti_482447}. However, it is consistent with the one-dimensional case. Furthermore, note that $\hat{g}$ depends on $v$, since shifts in the smooth parts of the solution also affect the shock position shift.

Under the Assumptions of \cref{prop: gamma c2} concerning the regularity of $\g^0$, we state the following regularity for $\left( v,\r \right)$.
\begin{proposition}[Existence and regularity of solutions to \cref{eq:evolution v,eq:evolution r}]\label{prop: regularity sol}
    ~\\
    Let the assumptions of \cref{prop: gamma c2} hold true. Let $v_0 \in C^1(\R^d\setminus \operatorname{Im}(\g_0^0),\R)$ away from the discontinuity and $\r_0 \in C^1(\R^{d-1},\R)$. Then, there exist solutions to \cref{eq:evolution v,eq:evolution r}, with $v \in C^1(\R^d\setminus \operatorname{Im}(\g^0(\cdot,t)) \times [0,T) ,\R)$ away from the discontinuity and $\r \in C^1(\R^{d-1} \times [0,T),\R)$ locally in time. 
\end{proposition}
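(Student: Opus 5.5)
The plan is to treat the two equations separately and use the regularity of the coefficients furnished by \cref{prop: gamma c2}. By \cref{prop: gamma c2}, $\un$ is $C^2$ away from $\operatorname{Im}(\g^0(\cdot,t))$ and $\g^0(\cdot,t)$ stays $C^2$ on some interval $[0,T)$. Since $f\in C^3$, the coefficients $f'(\un)$, $\nabla_x\cdot f'(\un)$ and $S'(\un)$ in \cref{eq:evolution v} are then $C^1$ on each side of the shock.

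For \cref{eq:evolution v} we write the equation in non-conservative form,
\[
\partial_t v + f'(\un)\cdot\nabla_x v = \bigl(S'(\un) - \nabla_x\cdot f'(\un)\bigr) v ,
\]
and solve it by characteristics $\dot x = f'(\un(x,t))$. These are exactly the characteristics of \cref{eq: IVP}. Since the shock is compressive (entropy admissible), they run into the shock and never emanate from it. Hence every point $(x,t)$ off the shock can be traced backwards to a point $x_0\notin\operatorname{Im}(\g_0^0)$ at $t=0$. The backward flow map is $C^1$ because the vector field $f'(\un)$ is $C^1$ there. Along each characteristic the equation is a linear ODE with $C^1$ coefficients and initial value $v_0(x_0)$, so $v$ is $C^1$ away from the shock by differentiable dependence on initial data. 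This is the broad solution referred to after \cref{theorem}.

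For \cref{eq:evolution r} we use that it is a linear first-order transport equation on $\D\times[0,T)$. First we check the regularity of the coefficients. The vector field $g$ contains $[f(\un)]$, $[\un]^{-1}$, $\sqrt{\det((D_\t\g^0)^TD_\t\g^0)}^{-1}$ and $H$. The factor $H$ contains first derivatives of $\g^0$ and the normal $n^0$, which by \cref{eq:definition normal} also involves only first derivatives. So $g\in C^1$ as long as $\g^0\in C^2$, the rank condition of \cref{assumptions}(c) holds, and $[\un]\neq 0$. The factor $\tilde g$ involves traces of $\nabla_x\un$, which are only $C^1$ since $\un\in C^2$; this is enough for Lipschitz/$C^1$ ODE theory. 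The factor $\hat g$ involves the traces of $v$ from the first step, which are continuous and $C^1$ along the shock. The characteristics of \cref{eq:evolution r} are $\dot\t = g(\t,t)$, and along them $\r$ satisfies the linear ODE $\dot\r = \tilde g\,\r + \hat g$. The inhomogeneous linear ODE has a unique solution, so $\r(\t,t)$ is obtained by inverting the characteristic flow. Its $C^1$ regularity follows from the $C^1$ dependence of the flow on initial points for short times, since the flow map is then a $C^1$ diffeomorphism close to the identity.

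We expect the main obstacle to be the inflow/outflow behaviour of the characteristics $\dot\t=g$ at $\partial\D$. The parameter domain is a compact cube, and characteristics may leave or enter $\D$. For hypersurfaces that end inside $\Omega$, $[\un]$ may also vanish at $\partial\D$, so $g$ and $\tilde g$ degenerate there. We would handle this as the statement does, by posing $\r_0$ on $\R^{d-1}$. Concretely, we extend $g,\tilde g,\hat g$ (and $\g^0$) to a neighbourhood of $\D$ as $C^1$ functions by a standard extension. We cut off near points where $[\un]=0$, which is justified by the remark that only points with nonzero jump matter. We then solve on $\R^{d-1}$ and restrict. Local-in-time existence follows because all bounds are uniform on compact sets for $t<T$.
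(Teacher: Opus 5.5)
Your overall route coincides with the paper's, which offers nothing beyond an appeal to the classical theory of characteristics \cite[Section 3.3]{PDE2015}. Your treatment of \cref{eq:evolution v} and your reduction of \cref{eq:evolution r} to $\dot{\t}=g$, $\dot{\r}=\tilde g\,\r+\hat g$ are a sound expansion of that. The step that fails is your handling of what you call the main obstacle.

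\textbf{The cut-off step.} Cutting off the coefficients near $E=\{[\un]=0\}$ necessarily modifies \cref{eq:evolution r} on an open part of $\D$ where $[\un]\neq 0$ is merely small. A cut-off supported outside $\D$ could not remove a singularity sitting on $\partial\D$. On that set your $\r$ does not solve \cref{eq:evolution r}. Since $\dot{\t}=g$ can carry values from there into the rest of $\D$, $\r$ elsewhere then depends on the arbitrary cut-off. The remark that only points with nonzero jump matter justifies discarding $E$ itself, not a neighbourhood of it: there the term $-\operatorname{sign}(\r)[\un]\chi_{M(t)}$ in \cref{eq: reg vari} is nonzero. Moreover, under your premise of degenerating coefficients you would have no uniform bounds with which to shrink the cut-off region to $E$. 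By contrast, extending the coefficients outside $\D$ is harmless for existence, precisely because it leaves the equation on $\D$ untouched.

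\textbf{The premise is false.} Once this is seen, no cut-off is needed. Write $u^\pm=\un(\g^0(\t,t)\pm,t)$ and $\partial_{n^0}u^\pm=\nabla_x\un(\g^0(\t,t)\pm,t)\cdot n^0$. Then $g$ contains $[\un]$ only through $[f(\un)]/[\un]=\int_0^1 f'(u^-+s[\un])\,ds$. The identities $[\un]f'(u^+)-[f(\un)]=[\un]^2\int_0^1 s\,f''(u^-+s[\un])\,ds$ and $[f(\un)]-[\un]f'(u^-)=[\un]^2\int_0^1(1-s)\,f''(u^-+s[\un])\,ds$ give
\begin{equation*}
\tilde g = n^0\cdot\int_0^1 f''\bigl(u^-+s[\un]\bigr)\bigl(s\,\partial_{n^0}u^+ + (1-s)\,\partial_{n^0}u^-\bigr)\,ds .
\end{equation*}
The same formula with $v^\pm$ in place of $\partial_{n^0}u^\pm$ holds for $\hat g$. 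Since $f\in C^3$, the coefficients $g,\tilde g,\hat g$ are $C^1$ on all of $\D$, including $E$, as soon as the one-sided traces are $C^1$ in $\t$. A standard $C^1$ extension to $\R^{d-1}$, together with locality of the PDE, then gives a solution on all of $\D$.

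\textbf{Regularity of the traces.} You assert that these traces exist and are $C^1$ without proof. This requires $\unn$ to be $C^2$ and $v_0$ to be $C^1$ up to $\operatorname{Im}(\g_0^0)$ from each side, so that the one-sided solutions can be continued smoothly across the shock along characteristics. Regularity on the open set $\R^d\setminus\operatorname{Im}(\g_0^0)$ alone does not control the traces for $t$ near $0$, where they are determined by data arbitrarily close to the initial discontinuity. This hypothesis is missing from the paper's statement as well.
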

This result follows again from the theory of characteristics, see e.g. \cite[Section 3.3]{PDE2015}. \\
Together with \cref{prop: well defined time}, we obtain from \cref{prop: regularity sol} the well-definedness of the regular variation as defined in \cref{def:reg vari} locally in time.

Finally, we present the proof for \cref{theorem}.
\begin{proof}[Proof of \cref{theorem}]
We derive evolution equations assuming that the expansion in the form of a regular variation for every $t>0$ and $\e \in [0,\e_0]$ is given. 

We start with the \textbf{evolution equation for $v$}. Away from the discontinuity of $\ue$, we have the expansion $\ue \doteq \un + \e v$. We assume that $\ue$ satisfies the initial value problem \cref{eq: perturbed conslaw}.
Up to terms of order $o(\e)$ we have
\begin{equation*}
    \begin{split}
        S(\un(x,t) + \e v(x,t))&=\partial_t \left( \un(x,t) + \e v(x,t) \right) + \nabla_x \cdot f(\un(x,t) + \e v(x,t)). 
    \end{split}
\end{equation*}
Using Taylor-expansion yields
\begin{equation*}
    \begin{split}
S(\un(x,t)) + \e S'(\un(x,t))v(x,t) \doteq &\partial_t \un(x,t) +\e \partial_t v(x,t)\\ 
 &+ \nabla_x \cdot \left( f(\un(x,t)) + \e f'(\un(x,t)) v(x,t)  \right).
    \end{split}
\end{equation*}
Collecting the first-order terms, we get 
\begin{equation}
    S'(\un(x,t))v(x,t) \doteq \partial_t v(x,t) +  \nabla_x \cdot \left( f'(\un(x,t)) v(x,t) \right).
\end{equation}
We continue with the \textbf{evolution equation for $\r$}:

If the shock vanishes on parts of the boundary of the hypersurface $\operatorname{Im}(\g^0(\cdot,t))$, we restrict ourselves to $\t \in \D \setminus E$, where $E$ describes the set of points where the shock is vanishing, i.e., $[\un](\t,t)=0$ for all $ \t \in E$. For $\t \in E$, the term in \cref{eq: reg vari} that includes $\r$ vanishes. 

First, we describe the shock speed using the Rankine-Hugoniot condition, see e.g. \cite{Daf16}.
With this, we obtain the following two equations for the shock speeds of the perturbed and unperturbed solution
\begin{equation}\label{eq: shockspeeds}
    \begin{split}
        s^\e = \frac{[f(\ue)]}{[\ue]} \cdot n^\e, \quad
        s^0 = \frac{[f(\un)]}{[\un]} \cdot n^0.
    \end{split}
\end{equation}
We expand all quantities and combine the expansions, such that all perturbed quantities are obtained in the form of a first-order approximation.

Using the expansion for $\g^\e$ in \cref{eq: gamma eps expansion} and $\ue \doteq \un + \e v$ away from the discontinuity, we obtain
\begin{equation*}
    \begin{split}
        &\ue^\pm(\t,t) \doteq \lim_{\delta \to 0} \ue \left( \g^0(\t,t) + \e \r(\t,t) n^0(\t,t) \pm \delta n^\e(\t,t),t\right) \\
        &\doteq \lim_{\delta \to 0} \Bigg( \Big( \un(x,t) + \e v(x,t) \Big)\Big \vert_{x=\g^0(\t,t) + \e \r(\t,t) n^0(\t,t) \pm \delta (n^0(\t,t)+ \e \eta(\t,t))}\Bigg).
    \end{split}
\end{equation*}
We continue using Taylor expansions
\begin{equation*}
    \begin{split}
        \ue^\pm(\t,t)  
        &\doteq \lim_{\delta \to 0}\Big ( \un(\g^0(\t,t) \pm \delta n^0(\t,t),t) + \e v(\g^0(\t,t) \pm \delta n^0(\t,t),t) \\ &\hspace{1.5cm}+ \e\left( \nabla_x \un(\g^0(\t,t)\pm\delta n^0(\t,t),t) \right) \cdot \left( \r(\t,t) n^0(\t,t) \pm \delta \eta(\t,t) \right) 
         \Big ) \\
        &= \un(\g^0(\t,t) \pm,t) + \e \left(\left( \nabla_x \un(\g^0(\t,t)\pm,t) \right) \cdot \left( \r(\t,t) n^0(\t,t)  \right) +  v(\g^0(\t,t)\pm,t) \right).
    \end{split}
\end{equation*}

Then, the difference of the linearizations reads
\begin{equation}\label{eq: expansion [u]}
    [\ue](\t,t) = \uebp(\t,t) - \uebm(\t,t) = \left[ \un \right](\t,t) + \e \left[ (\nabla_x \un) \cdot (\r n^0) + v \right](\t,t).
\end{equation}
Similarly, we compute the jump in the flux
\begin{equation*}
    \begin{split}
        \left[f(\ue)\right](\t,t) &= f(\uebp(\t,t))- f(\uebm(\t,t)) \\
        &\doteq \left[f(\un)\right](\t,t) + \e \left[f'(\un) \left( (\nabla_x \un) \cdot  \left(\r n^0\right) + v \right)\right](\t,t).
    \end{split}
\end{equation*}
Furthermore,
  \begin{equation}\label{eq: expansion f*n}
    \begin{split}
        &\bigl[f(\ue)\bigr](\t,t) \cdot n^\e(\t,t) \\
  &\doteq \Bigg(\Big[f\bigl(\un\bigr)\Big](\t,t) + \e  \Big[f'\bigl(\un\bigr)
     \bigl((\nabla_x \un)\cdot (\r
    n^0) + v\bigr)\Big](\t,t)\Bigg) \cdot \bigl(n^0(\t,t)
    + \e  {\eta}(\t,t)\bigr) \\
  &\doteq \Big[\!f\bigl(\un\bigr)\!\Big]\!(\t,t)\! \cdot \! n^0(\t,t) + \e \bigg(\!\Big[\!f'\bigl(\un\bigr)
     \bigl((\nabla_x \un) \!\cdot \!(\r
    n^0) \!+\! v\bigr)\!\Big]\!(\t,t)
    \!\cdot \!n^0(\t,t) + \Big[\!f\bigl(\un\bigr)\!\Big]\!(\t,t)
    \!\cdot \!{\eta}(\t,t)\!\bigg).
    \end{split}
  \end{equation}
  We collect all the terms to expand $s^\e$: 
  \begin{align}\label{eq:seps}
  s^\e(\t,t) \doteq \frac{\bigl[f(\ue)\bigr](\t,t) \cdot n^\e(\t,t)}{[\ue](\t,t)} 
  \doteq s^0(\t,t) + \e \lambda(\t,t) .
  \end{align}
Combining the expansions \cref{eq: expansion [u]} and \cref{eq: expansion f*n}, we obtain
\begin{equation}\label{eq: lambda}
    \begin{split}
        \lambda(\t,t) = &\frac{1}{[\un](\t,t)} \Bigg(\Big[f'\bigl(\un\bigr)
     \bigl((\nabla_x \un)\cdot (\r
    n^0 )+ v\bigr)\Big](\t,t)
    \cdot n^0(\t,t) + \Big[f\bigl(\un\bigr)\Big](\t,t)
    \cdot {\eta}(\t,t)\Bigg)\\
        &-\frac{1}{([\un](\t,t))^2} \Bigg( \left( \Big[f\bigl(\un\bigr)\Big](\t,t)\, \cdot n^0(\t,t) \right) \left[ (\nabla_x \un) \cdot (\r n^0) + v \right] (\t,t)
        \Bigg).
    \end{split}
\end{equation}
We now use this to derive the evolution equation for $\r(\t,t)$. We choose the evolution of the parametrization normal to the curve, such that by the Rankine-Hugoniot condition, it holds 
\begin{equation}\label{eq: RH}
    \partial_t \g^0(\t,t) = s^0(\t,t) n^0(\t,t), \quad
    \partial_t \g^\e(\t,t) = s^\e(\t,t) n^\e(\t,t).
\end{equation}
With the expansions \cref{eq: neps expansion} and \cref{eq:seps} we obtain 
\begin{equation}\label{eq: expansion time deriv gamma eps}
    \begin{split}
        \partial_t \g^\e(\t,t) &\doteq \left( s^0\left( \t,t \right) + \e \lambda(\t,t)\right) \left( n^0\left( \t,t \right) + \e \eta\left( \t,t \right) \right) \\ 
        &\doteq s^0(\t,t) n^0(\t,t) + \e \left( \lambda(\t,t) n^0(\t,t) + s^0(\t,t) \eta(\t,t) \right).
    \end{split}
\end{equation}
From the expansion of $\g^\e$ in \cref{eq: gamma eps expansion}, we derive 
\begin{equation*}
    \r(\t,t) \doteq \frac{1}{\e} \left( \g^\e(\t,t) - \g^0(\t,t) \right)\cdot  n^0(\t,t).
\end{equation*}
Differentiating this with respect to time yields
\begin{align*}
    \partial_t \r(\t,t) \doteq \frac{1}{\e} \left( \partial_t \g^\e(\t,t) - \partial_t \g^0(\t,t) \right)\cdot n^0(\t,t) + \frac{1}{\e} (\g^\e(\t,t)-\g^0(\t,t))\cdot \left( \partial_t n^0(\t,t) \right).
\end{align*}
Using the expansions in \cref{eq: gamma eps expansion} and \cref{eq: expansion time deriv gamma eps} and the Rankine-Hugoniot condition \cref{eq: RH}, we write 
    \begin{equation} 
        \begin{split}
            \partial_t \r(\t,t) &\doteq \frac{1}{\e} \Big( s^0(\t,t) n^0(\t,t) + \e \left( \lambda(\t,t) n^0(\t,t) + s^0(\t,t) \eta(\t,t) \right) - s^0(\t,t) n^0(\t,t) \Big)\cdot n^0(\t,t) \\
     &\hspace{1cm} + \frac{1}{\e} \Big(\g^0(\t,t) + \e \r(\t,t) n^0(\t,t)-\g^0(\t,t)\Big) \cdot \left( \partial_t n^0(\t,t) \right) \\ 
     &= \left( \lambda(\t,t) n^0(\t,t) + s^0(\t,t) \eta(\t,t) \right) \cdot n^0(\t,t) + \r(\t,t) n^0(\t,t) \cdot \left( \partial_t n^0(\t,t) \right)\\ 
     &=  \lambda(\t,t),
        \end{split}
    \end{equation}
where we use $0=  \partial_t \| n^0(\t,t) \|^2 = 2\left( n^0(\t,t) \cdot \left( \partial_t n^0(\t,t) \right) \right)$ and $\eta(\t,t) \cdot n^0(\t,t)=0$ in the last equation.
Substituting the formula for $\lambda$ from \cref{eq: lambda} we arrive at the evolution equation for $\r$.
Rearranging the terms  and inserting $\eta$ defined in \cref{eq: def eta} yields \cref{eq:evolution r}.

\end{proof}
Together with \cref{prop: well defined time}, we obtain from \cref{prop: regularity sol} the well-definedness of the regular variation as defined in \cref{def:reg vari} locally in time. We discuss shortly where locality in time is necessary.
\begin{remark}[Locality in time]
    For the reader's convenience, we summarize the assumptions that restrict the analysis to a local time interval. First of all, the assumption that no discontinuity other than the one at $\operatorname{Im}(\g^0)$ appears in \cref{assumptions} (a) is an assumption that can only hold locally in time, since discontinuities can arise even for smooth data in nonlinear balance laws, see e.g. \cite[Section 3.3]{lV}. Second, also \cref{prop: gamma c2,prop: well defined time,prop: regularity sol} for the well-definedness of $\t^*$ and therefore the well-definedness of the regular variation as defined in \cref{def:reg vari} only hold locally in time. 
    All of these statements pose restrictions on the time span $[0,T)$ we consider.
\end{remark}

As mentioned in \cref{sec: Intro}, Lecaros and Zuazua \cite{LZ16} developed a calculus for two-dimensional conservation laws. The relation to the work by Lecaros and Zuazua is clarified in the following Proposition

\begin{proposition}\label{prop: lec_zuazua} 
    For $d=2$, $S=0$ and discontinuity curves that divide the domain into two subdomains, the evolution equations for the tangent vectors in \cref{theorem} coincide with those in \cite{LZ16}.
\end{proposition}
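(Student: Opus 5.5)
The plan is to specialize \cref{theorem} to $d=2$, $S=0$, and then rewrite the resulting equations in the notation of \cite{LZ16}. After that we compare them term by term with the system stated there.

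\textbf{The equation for $v$.} With $S=0$, \cref{eq:evolution v} becomes $\partial_t v + \nabla_x\cdot(f'(\un)v)=0$ away from the shock. This is exactly the linearized transport equation for the smooth component used in \cite{LZ16}, so no further work is needed. The only thing to check is that both works understand its solution in the same (broad) sense on the two subdomains separated by the curve.

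\textbf{The equation for $\r$, first-order terms.} For $d=2$ we have $D=[0,1]$ and $D_\t\g^0=\partial_\t\g^0\in\R^{2\times 1}$, so $\sqrt{\det((D_\t\g^0)^TD_\t\g^0)}=|\partial_\t\g^0|$.
\begin{itemize}
\item By \cref{eq:definition normal}, the normal is the rotated unit tangent, $n^0=\rotmat^{T}\partial_\t\g^0/|\partial_\t\g^0|$ up to the sign convention.
\item The adjugate of a $1\times1$ matrix is $1$, so $H$ reduces to $H=\big((-1)^{i+2}(n^0)_{\hat i}\big)_{i=1}^2$, which is a $2\times 1$ vector proportional to the unit tangent $\tau=\partial_\t\g^0/|\partial_\t\g^0|$.
\item Consequently $\eta = \partial_\t r\,\tau/|\partial_\t\g^0|$, i.e. the normal variation equals the arclength derivative of $r$ times the tangent. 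This is the usual formula for the normal of a normally displaced curve.
\item Inserting this into $g$ gives
\[
g\,\partial_\t r=-\frac{[f(\un)]\cdot\tau}{[\un]\,|\partial_\t\g^0|}\,\partial_\t r ,
\]
which is a tangential transport term with speed given by the tangential component of the Rankine--Hugoniot flux ratio, written in arclength.
\end{itemize}

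\textbf{The equation for $\r$, zeroth-order terms and matching.} For $\tilde g$ and $\hat g$ no simplification is needed beyond specializing to $d=2$. It remains to match them with the source terms in \cite{LZ16}. I expect this step to be the main obstacle, because \cite{LZ16} may parametrize the curve differently: possibly by arclength, or by a moving frame with a tangential velocity component, or by describing the shock shift as a vector field rather than a scalar normal displacement.

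\begin{enumerate}
\item \textbf{Reconcile the parametrizations.} I would first reparametrize. If \cite{LZ16} moves points with the full velocity $\partial_t\g$ including a tangential part, the chain rule produces an additional term of the form $(\partial_t\g\cdot\tau)\partial_s r$. This term should be absorbed into our $g$ once the Rankine--Hugoniot parametrization \cref{eq: RH} is converted.
\item \textbf{Reconcile the form of the displacement.} If \cite{LZ16} use a vector shift $\delta\g$, I would project it onto $n^0$. I would then verify that the tangential part of $\delta\g$ only generates reparametrization and drops out of the first-order variation $\ueb$. This requires the hypothesis that the curve divides the domain, so that it has no interior endpoints and no boundary terms arise.
\item \textbf{Compare the sources.} With parametrizations aligned, I would expand $\tilde g$ by writing $\nabla_x\un\cdot n^0$ as the normal derivative on each side. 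I would then check that $\tilde g\,r+\hat g$ coincides with the linearization of $[f(u)]\cdot n/[u]$ given in \cite{LZ16}, using the same first-order jump expansions as in \cref{eq: expansion [u]} and \cref{eq: expansion f*n}.
\end{enumerate}

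Finally, I would check that the initial data $(v_0,r_0)$ correspond under this identification, which completes the proof.
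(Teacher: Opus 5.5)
There is a genuine gap, and it lies in the step you flag as the main obstacle. Your step~3 assumes that \cite{LZ16} state the shift equation as a linearized Rankine--Hugoniot speed, so that you only need to compare $\tilde g\,r+\hat g$ with ``the linearization of $[f(u)]\cdot n/[u]$''. In fact, \cite[Eq.~(4.17)]{LZ16} is a divergence-form equation on the space-time shock surface. In the present notation it reads as follows, after cancelling a factor $1/\sqrt{1+(s^0)^2}$ and undoing their orientation $-n^0$, which flips the sign of $r$ and of all jumps:
\[
\frac{1}{\|\partial_\theta\gamma^0\|}\Big(\partial_\theta(Br)+\partial_t\big(\|\partial_\theta\gamma^0\|\,[u^0]\,r\big)\Big)=[f'(u^0)v]\cdot n^0-[v]\,s^0,\qquad B=(\partial_t\gamma^0\cdot\tau^0)\,[u^0]+[f(u^0)]\cdot\tau^0 .
\]
Impose $\partial_t\gamma^0\cdot\tau^0=0$; your step~1 points in this direction. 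The product rule then gives the transport coefficient $B/([u^0]\|\partial_\theta\gamma^0\|)=g$ and the source $\hat g$ at once. The coefficient of $r$, however, becomes
\[
-\frac{1}{[u^0]\,\|\partial_\theta\gamma^0\|}\Big(\partial_t[u^0]\,\|\partial_\theta\gamma^0\|+[u^0]\,\partial_t\|\partial_\theta\gamma^0\|+\partial_\theta B\Big).
\]
This contains the time derivative of the jump along the moving shock, the stretching of the line element, tangential derivatives of $u^0$ and curvature. It is not visibly equal to $\tilde g$, which contains only normal derivatives. Proving this identity is the real content of the proposition, and nothing in your plan produces it.

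The identity needs four ingredients:
\begin{itemize}
\item From $\partial_t\gamma^0=s^0n^0$ one gets $\partial_t\|\partial_\theta\gamma^0\|=-s^0\,n^0\cdot\partial_\theta\tau^0$.
\item Since $\partial_\theta\tau^0\parallel n^0$, the curvature terms combine into $\big([f(u^0)]-n^0([f(u^0)]\cdot n^0)\big)\cdot\partial_\theta\tau^0=0$.
\item Crucially, $\partial_t[u^0]$ must be evaluated with the PDE on both sides of the shock: $\frac{d}{dt}u^0(\gamma^0(\theta,t)\pm,t)=-f'(u^0)\cdot\nabla_x u^0+s^0\,n^0\cdot\nabla_x u^0$.
\item Splitting $\nabla_x u^0$ into tangential and normal parts makes the tangential terms cancel against $\partial_\theta B$.
\end{itemize}
Inserting $s^0=[f(u^0)]\cdot n^0/[u^0]$ into what remains yields $\tilde g$. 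This PDE step is where $S=0$ enters the $r$-equation, not only the $v$-equation. Your step~2 is unnecessary, because the shift in \cite{LZ16} is already a scalar normal displacement.

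Your first-order computation also has a sign error. The formula for $H$ is tied to the normal defined by \eqref{eq:definition normal}, which for $d=2$ is the counterclockwise rotation $\rotmat\tau^0$ of the unit tangent. You insert the opposite normal $\rotmat^{T}\tau^0$ into $H$, which flips its sign. The correct expressions are $H=-\tau^0$, $\eta=-\frac{\partial_\theta r}{\|\partial_\theta\gamma^0\|}\,\tau^0$ and $g=+\frac{[f(u^0)]\cdot\tau^0}{[u^0]\,\|\partial_\theta\gamma^0\|}$. The formula for $\eta$ is the standard variation of the normal of a normally displaced curve, and it carries a minus sign.

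This is not a matter of convention. Reversing the normal reverses $r$ and all jumps together, so the transport coefficient written in terms of the new $r$ is unchanged. A check with a planar shock confirms the plus sign: take constant states and perturb the shock to $x_2=s^0t+\varepsilon r(x_1,t)$. Then $\partial_t r+\frac{[f_1(u^0)]}{[u^0]}\partial_{x_1}r=0$. With your sign, $g$ would already disagree with $B/([u^0]\|\partial_\theta\gamma^0\|)$ at the transport term, before the zeroth-order comparison even starts.
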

For the proof we refer to the Appendix \ref{sec: equivalence}.

\FloatBarrier
\section{Implementation}\label{sec: Implementation}

The formal derivation in \cref{sec: gen tangent vec} provides a basis for the derivation of the numerical scheme.
In \cref{theorem} we introduced regular variations and evolution equations to compute the generalized tangent vector to $\ue$.
In the following, we want to validate the presented calculus using numerical approximations, where we restrict ourselves to the two-dimensional setting $d=2$. To fix the notation we discuss the reduction of the calculus in \cref{sec: 2d reduction}.
In \cref{sec: Methods} we introduce the methods to calculate the first-order approximation $\ueb$ following \cref{def:reg vari}. Afterwards, in \cref{sec:pde-solver} we give details on the numerical \textsc{PDE-solver} that we use.

Numerical experiments using these methods are shown in \cref{sec: num results} to provide numerical evidence that $\ueb$ is indeed a first-order approximation of $\ue$ in these examples.

\FloatBarrier
\subsection{Explicit Formulas in the case $d=2$}\label{sec: 2d reduction}
In two space dimensions, we provide explicit formulas for some quantities.
We start with the normal vector, that was defined in \cref{eq:definition normal} and is given by 
\begin{equation}\label{eq: normal 2d}
     n^0(\t,t)  \coloneqq  \frac{1}{ \| \partial_\t \g^0(\t,t) \|  }\rotmat \left( \partial_\t \g^0(\t,t) \right).
\end{equation}
The first-order term for the expansion of $n^\e$ is defined as
\begin{equation*}
    \eta(\t,t) = -\frac{\partial_\t \g^0(\t,t)}{\| \partial_\t \g^0(\t,t) \|^2}\left( \partial_\t r(\t,t) \right).
\end{equation*}
Using these in the proof of \cref{theorem} and rewriting this in the conservative form leads to \cref{cor:conservative r eq}. 

\FloatBarrier
\subsection{Methods}\label{sec: Methods}

Our overall goal is to compute the approximate solution $ \ueb(x,t) \doteq  \ue(x,t)$.
In this section, we present the numerical methods to obtain this approximation. We start by discussing how to combine all components for the numerical approximation of $\ueb(x,t)$ following \cref{eq: reg vari}. After this, we discuss the computation of the components needed in the first-order approximation, i.e., $\un$, $\g^0$, $v$, $\r$ and $\t^*$, using among others the evolution equations developed in \cref{theorem}.

In the following, we denote the numerical approximation by $\nue$. In Algorithm \ref{alg:general}, we present the procedure to approximate numerically the first-order approximation $\ueb$. At every time step, we choose the step size $\Delta t$ such that all CFL conditions of the partial differential equations \eqref{eq: IVP}, \eqref{eq:evolution v} and \eqref{eq:evolution r} for $\un$, $v$ and $\r$ are satisfied. Afterwards, we update $\gn$, $\nv$, $\nr$ and $\nun$, the numerical approximations of $\g^0$, $v$, $\r$ and $\un$ as explained in Sections \ref{sec: num sol}, \ref{sec:num shock curve}, \ref{sec: num sol v} and \ref{sec:num sol r}. Here, $f$, $\mathcal{F}_v$ and $\mathcal{F}_\r$ describe the fluxes for $\un$, $v$ and $\r$, respectively, while $S$, $\mathcal{S}_v$ and $\mathcal{S}_\r$ describe the source terms for $\un$, $v$ and $\r$, respectively. Finally, using the function \textsc{Approx}, we combine the components to obtain the approximation $\nue$ following \cref{eq: reg vari}.

In the function \textsc{Approx} in Algorithm \ref{alg:general}, we first calculate \textsc{theta\_star} as explained in \cref{sec: theta_stern} followed by the distance $\alpha$ that is measured relative to the displacement $\e \vert \nr\vert $. Afterwards we check whether the point $x$ lies in $M(t)$ from \cref{eq: reg vari} by checking if it is in an $\e \vert \nr \vert$ tube around $\gn$ by $\alpha \le 1$. By checking if 
\begin{equation*}
    \operatorname{dist}\left( \gn(\t^*)+ \e \alpha \nr(\t^*) \nn, x \right) < \operatorname{dist}\left( \gn(\t^*)- \e \alpha \nr(\t^*) \nn, x \right),
\end{equation*}
we ensure that it lies on the correct side of $\gn$. If this is true, we add the shock displacement term $-\operatorname{sign}(\nr(\t^*)) [\nun]$ otherwise, we only use $\nun + \e \nv$.

In the following sections we always assume a fixed time $t \ge 0$ and discuss how to compute the different quantities in a time-step of $\Delta t$.

\begin{algorithm}[H]
	\caption{Approximation of $\ueb$}\label{alg:general}

    \KwIn{initial data $\unn$, initial shock curve parametrization $\g_0^0$, initial $L^1$-displacement $v_0$, initial shock displacement $\r_0$, narrow band distance $\delta_{nb}$, smoothing distance $\delta^s$, jump evaluation distance $\delta_\pm$, final time $T$}
    \KwOut{$\nue(\cdot,T)$}
    $t \gets 0$\\
    $\left( \nun,\gn, \nv,\nr\right) \gets \textsc{Initialize}(\unn,\g_0^0,v_0,\r_0)$\\
    \While{$t < T$}{
        $\Delta t \gets \min\left( \textsc{CFL-time-step}(f),\textsc{CFL-time-step}(\mathcal{F}_v),\textsc{CFL-time-step}(\mathcal{F}_\r) \right)$\\
        $\gn \gets \textsc{time-step-shock-curve}(\gn,\Delta t,\nun,f(\nun),\delta_\pm)$\\
        
        $\nv \gets \textsc{time-step-$L^1$-displacement}(\mathcal{F}_v(v,x,t;\un),\nun,\gn,\nv,\Delta t,\delta_{nb},\delta^s)$\\
        $\nr \gets \textsc{PDE-solver}(\nr,\Delta t,\mathcal{F}_\r(\r,\t,t;\nun,\nv,\delta_\pm),\mathcal{S}_\r( \r,\t,t;\nun,\nv, \delta_\pm ))$\\
        $\nun \gets \textsc{PDE-solver}\big(\nun,\Delta t, f(\nun),S(\nun)\big)$\\
        $\nue \gets \textsc{Approx}\left( \nun, \gn,\nv,\nr,\delta_\pm\right)$\\
        $t \gets t + \Delta t$
    }
    \hspace*{-\algomargin}\rule{\dimexpr\linewidth\relax}{0.4pt}

\SetKwProg{Fn}{Function}{}{}
\Fn{$\textsc{Approx}\left( \nun, \gn,\nv,\nr,\delta_\pm\right)$}{

        $\t^* \gets \textsc{theta\_star}(x;\gn)$\\
        \If{$\vert \nr(\t^*)\vert > 0$}{
        $\alpha \gets \frac{\operatorname{dist}\left( \gn(\t^*),x \right)}{\e \vert \nr(\t^*) \vert}$\\
        $\operatorname{dist}_+ \gets \operatorname{dist}\left( \gn(\t^*)+ \e \alpha \nr(\t^*) \nn(\t^*), x \right)$ \\ 
        $\operatorname{dist}_- \gets \operatorname{dist}\left( \gn(\t^*)- \e \alpha \nr(\t^*) \nn(\t^*), x \right)$\\ 
        \If{$\alpha \le 1  \text{ and } \operatorname{dist}_+ < \operatorname{dist}_-$}{
                $[\nun] \gets \nun(\gn(\t^*) + \delta_\pm \nn(\t^*))-\nun(\gn(\t^*) - \delta_\pm \nn(\t^*)) $\\
                \KwRet $\nun(x) + \e \nv(x) - \operatorname{sign}(\nr(\t^*)) [\nun]$
            }
        }
    \KwRet $\nun(x) + \e \nv(x)$
}
\end{algorithm}

\FloatBarrier
\subsubsection{Computation of the Unperturbed Solution $\un$}\label{sec: num sol}

To obtain the approximation of the solution $\un$ to the initial value problem \cref{eq: IVP}, we apply a numerical scheme to solve hyperbolic balance laws. The numerical solution at a fixed time step is denoted by $\nun(\cdot): \Omega \to \R$. To handle the differential equations \eqref{eq:evolution v} and \eqref{eq:evolution r} as well, the scheme should be able to treat space- and time-dependent fluxes and source terms. Furthermore, for the evolution of the shock position, high accuracy and, ideally, a higher-order reconstruction are necessary. When using a piecewise constant representation together with the shock curve approximation in \cref{sec:num shock curve}, the strong jumps across every cell interface can lead to shock curve representations with incorrect normal vectors to the shock curve.

We refer to the time stepping function of the solver as $\textsc{PDE-solver}(\textsc{u},\Delta t, \textsc{f},\textsc{s})$, where $\textsc{u}$ describes the current state of the solution, $\Delta t$ the time step, $\textsc{f}$ the flux function and $\textsc{s}$ the source term. We give details on the solver in \cref{sec:pde-solver}.
\FloatBarrier
\subsubsection{Computation of the Shock Curve $\g^0$}\label{sec:num shock curve}

Recall that $C^2$ regularity of the curve is required for the well-definedness of $\t^*$ in \cref{prop: well defined} and the evolution equation \eqref{eq:evolution r} for $\r$, see \cref{prop: regularity sol}. Therefore, we approximate the 
parametrization $\g^0$ of the discontinuity curve by two-dimensional cubic splines, i.e., we have $N+1$ spline nodes $\sn^i\approx\g^0(\frac{i}{N})$ for $i=0,\dots,N$ and a set $C$ of coefficients for the polynomials in between the spline nodes for each dimension. Since we do not want to impose additional conditions on the boundary, we choose the not-a-knot closing condition \cite{Boor1985ConvergenceOC}.
We denote the spline function approximating $\g^0$ by 
\begin{equation*}
    \gn \in \{\g \in  C^2(\D,\R^d)  : \g\big\vert_{\left[\frac{i}{N},\frac{i+1}{N}\right]} \in (\Pi_3)^2, \, i=0,\dots,N-1 \},
\end{equation*}
where $(\Pi_3)^2$ describes the space of two-dimensional polynomials of degree at most three.
Following \cref{eq: normal 2d}, we compute the numerical approximation of the normal $n^0$ as
\begin{equation*}
    \nn(\t)  \coloneqq  \frac{1}{ \| \partial_\t \gn(\t) \|  }\rotmat \left( \partial_\t \gn(\t) \right).
\end{equation*}
For the time evolution of the shock curve, we use the Rankine-Hugoniot condition. For this purpose, we compute the shock speed at each spline node, i.e.,
\begin{equation}\label{eq: num deriv gamma}
    \begin{split}
        \partial_t \g^0(\t,t) &= \left( \frac{[f(u^0)](\t,t)}{[u^0](\t,t)} \cdot n^0(\t,t) \right) n^0(\t,t) \\
        &\approx \left( \frac{f(\nun(\sn^i + \delta_\pm \nn(\t)))-f(\nun(\sn^i - \delta_\pm \nn(\t)))}{\nun(\sn^i + \delta_\pm \nn(\t))-\nun(\sn^i - \delta_\pm \nn(\t))} \cdot \nn(\t) \right) \nn(\t),
    \end{split}
\end{equation}
for $\t=\frac{i}{N}$,
where the approximation of the right-hand side holds for fixed $t$.
The choice of the distance $\delta_\pm$ for the left- and right-hand states is governed by the interplay between the numerical viscosity of $\nun$ and the requirement that the states remain sufficiently close to the curve. Thus, it is strongly dependent on the \textsc{PDE-solver}.
The ordinary differential equations \eqref{eq: num deriv gamma} can then be solved by an arbitrary {\textsc{ODE-solver}$(\textsc{u},\Delta t, \dot{\textsc{u}})$} with \textsc{u} the state at the previous time step, $\Delta t$ the time step and $\dot{\textsc{u}}$ the time derivative of $\textsc{u}$. For simplicity, we use the explicit Euler method. 
After performing one time step, we obtain a set of new spline nodes, from which we calculate the spline coefficients for each segment with a suitable method. 
We refer to this time-stepping process as \textsc{time-step-shock-curve}.

\FloatBarrier
\subsubsection{Computation of the $L^1$-Displacement $v$}\label{sec: num sol v}

The $L^1$-infinitesimal displacement is computed using \cref{eq:evolution v},
where we define the flux as $\mathcal{F}_v(v,x,t;\un) \coloneqq f'(\un(x,t)) v$ and the source as $\mathcal{S}_v(v,x,t;\un) \coloneqq S'(\un(x,t)) v$.
Solving the equation for $v$ away from the discontinuity is straightforward and can be done in the same way as for $\un$, see \cref{sec: num sol}. Solving it near the discontinuity is more challenging. Here, the conservation of $v$ across the discontinuity of $\un$ is not necessarily satisfied. Therefore, simply computing the solution on the whole domain does not provide good results and may even lead to blow-ups. 
While one can use numerous strategies to avoid this problem, our approach is to correct the solution computed on the whole domain in a narrow band around the shock position.

For this purpose, we start from a modified solution $(\nun)^s$, which is smoothed within a narrow band around the shock position. The width of this band is determined by the distance parameter $\delta_{nb}>0$. It must be sufficiently large to ensure that the smoothing effect is not dominated by the inherent numerical viscosity, while an excessively large value leads to a loss of accuracy in the numerical solution for $v$. For simplicity, we use a box filter, see e.g. \cite[Section 3.2]{Sze22}.

Using $(\nun)^s$ in the transport equation \eqref{eq:evolution v} for $v$, we formally have a spatially discretized, continuous flux instead of a fully discontinuous one. Then, we perform a time step with the \textsc{PDE-solver} in \cref{sec: num sol}. Afterwards, the values in the narrow band with distance $\delta_{nb}+\delta^s$ are corrected by extrapolation. The additional distance $\delta^s>0$ is dependent on the applied smoothing technique. The narrow band with $\delta_{nb}+\delta^s$ should be sufficiently large to cover the areas in which the smoothing of $\nun$ is applied, but also small enough to keep as much of the original solution as possible. For the extrapolation, we choose values along the normal of the curve, but outside the narrow band.
For our purposes we found linear extrapolation along the normal to be sufficient. For the distance between the extrapolation points we choose $\Delta x$, which denotes the smallest diameter of a cell in the numerical scheme \textsc{PDE-solver}.
In Algorithm \ref{alg:general}, we write $\textsc{time-step-$L^1$-displacement}(\mathcal{F}_v(v,x,t;\un),\nun,\gn,\nv,\Delta t,\delta_{nb},\delta^s)$ for this time-stepping procedure for $\nv$.

\FloatBarrier
\subsubsection{Computation of the Shock Displacement $\r$}\label{sec:num sol r}

The transport equation for $\r$ is given by \cref{cor:conservative r eq}. In the following we refer to the flux as $\mathcal{F}_\r(\r,\t,t; \un,v, \delta_\pm) \coloneqq  G(\t,t) \r(\t,t)$ and $\mathcal{S}_r(\r,\t,t;\un,v,\delta_\pm) \coloneqq \tilde{G}(\t,t)  {\r(\t,t)} + \hat{g}(\t,t)$, where the dependence on $\un$, $v$ and $\delta_\pm$ is included in the functions $G$, $\tilde{G}$ and $\hat{g}$. Since $\g^0$ describes the shock position in $\un$, this also implies dependence on $\g^0$.
In particular, we need the left- and right-hand sided values of these functions. As already illustrated in \cref{sec:num shock curve}, we compute them using the parameter $\delta_\pm >0$, i.e.,
\begin{equation*}
    \left[\nun\right](\t)=\nun(\gn(\t) + \delta_\pm \nn(\t))-\nun(\gn(\t) - \delta_\pm \nn(\t))
\end{equation*}
and similarly for $\nv$, $f(\nun)$, and the corresponding combinations with $\nn$, following \cref{eq: jump definition}.
Finally, to solve \cref{eq:evolution r}, we use \textsc{PDE-solver} from \cref{sec: num sol}. We denote the numerical solution by $\nr$.
\begin{remark}
     Due to the discretizations of these quantities, the numerical solution $\nr$ to \cref{eq:evolution r} may exhibit oscillations. In experiments, we observed that adjusting the value $\delta_\pm$ and setting the finest grid sizes for $\nun$ and $\nv$ to $(\Delta x)^2$ and $\Delta x$ for $\nr$ while $\frac{1}{N}\gg \Delta x$ helps to reduce oscillations in $\nr$. 
\end{remark}
\FloatBarrier
\subsubsection{Computation of the Projection $\t^*$}\label{sec: theta_stern}
To compute $\t^*(y,t)$ as defined in \cref{eq: theta_stern}, we use a gradient descent method to minimize the functional 
\begin{equation*}
    \mathcal{J}(\t) \coloneqq  \| y- \g^0(\t,t) \|^2 \approx \| y- \gn(\t,t) \|^2,
\end{equation*}
for a fixed $t$, which has the same minimizers as $\|y-\gn(\t,t) \|$.
In general, $\mathcal{J}$ is not convex in $\t$, since $\gn$ is in general not linear in $\t$.
To find the minimizer numerically, we choose the starting points close to the argument $y$. Moreover, we perform each minimization from multiple initial points to reduce the risk of converging to local minima due to the non-convexity of the functional.

Starting from each of these points, we perform a fixed number of gradient-descent steps. Finally, we evaluate $\mathcal{J}$ at all candidates and compare the resulting values. We choose the candidate with the smallest value as $\t^*$. We refer to this procedure as \textsc{theta\_star}.

\FloatBarrier
\subsection{Numerical \textsc{PDE-solver}}\label{sec:pde-solver}
In Sections \ref{sec: num sol}, \ref{sec: num sol v} and \ref{sec:num sol r}, we use a \textsc{PDE-solver}.
In particular, these numerical simulations are performed using the strong-stability-preserving Runge-Kutta discontinuous Galerkin (SSP-RK-DG) \cite{Cockburn1990} framework \textit{MultiWave} \cite{kolb2026multiwavecomputationallabadaptive}, which features adaptive mesh refinement based on multiresolution analysis (MRA) \cite{Gerhard2016}. The latter is particularly well suited for the present application. Indeed, for sufficiently small perturbations, an accurate first-order approximation requires a mesh that is capable of resolving perturbations occurring on correspondingly small spatial scales. Employing a uniformly refined mesh would therefore result in high computational costs. In contrast, MRA-based grid adaptation dynamically refines the mesh only in regions where fine-scale structures or discontinuities are present, thereby achieving locally the required accuracy at significantly reduced computational cost.

For solving the equation for $\r$ as described in \cref{sec:num sol r}, we found it useful to steer the adaptivity taking also into account the curvature of $\gn$. The curvature of the spline can easily be evaluated as the combination of the derivatives in the spline segments, see \cref{sec:num shock curve}.

The numerical flux for the evolution equation for $v$ depends on evaluations of $\un$, whereas the one for the equation of $r$ depends on both $\un$ and $v$. Since the discontinuous Galerkin approximation is discontinuous across element interfaces, interface values are not uniquely defined. Whenever such values are required, we employ the arithmetic average of the values from all adjacent cells.

\FloatBarrier
\section{Computational Results}\label{sec: num results}
 
In this section, we present numerical experiments for the first-order approximation following the methods in \cref{sec: Implementation}. 
After describing how to measure the accuracy of the solutions in \cref{sec: acc}, we investigate two numerical examples in \cref{sec: halbkreis,sec: sinus}. The first example shows the case where the shock curve has endpoints in the domain and the second example displays the case where the domain is split in two parts by the curve and where a source term is included in the balance law.

We use a third-order DG-scheme with quadratic polynomials on Cartesian grids and an explicit third-order SSP–RK method with three stages for the time-discretization. For the numerical flux, we choose the local Lax–Friedrichs flux with the Shu limiter \cite{CS98}.
Moreover, we choose the smallest diameter of a cell to be $(\Delta x)^2$ for $\un$ and $v$ and  $\Delta x$ for $\r$, where $\Delta x$ depends on the example. The CFL numbers for the applications of \textsc{PDE-solver} are set to $0.5$ for both examples. 

\subsection{Accuracy}\label{sec: acc}
To assess the numerical approximation of the first-order approximation, we compare the corresponding first-order approximation computed by Algorithm \ref{alg:general} with the numerical solution obtained from perturbed initial data, denoted by $\nueob$.
Since the computations are performed on adaptive grids, we project both solutions onto a common fully refined grid before computing their $L^1$-distance.
To check for linear decay, we compute the weighted $L^1$-distance:
\begin{equation}\label{eq: weighted L1 error}
    e (\e,T)=\frac{\| \nue(\cdot,T) - \nueob(\cdot, T) \|_{L^1(\Omega)}  }{\e}.
\end{equation}
We expect an approximately linear decay, i.e. $e(\e,T)= \mathcal{O}(\e)$, which corresponds to an $L^1$-error of order $\mathcal{O}(\e^2)$, and in particular $o(\e)$.
This yields numerical evidence that the proposed tangent vector yields a first-order variation.
To quantify how well a linear model describes this decay, we calculate an affine least-squares fit and compute the mean error by 
\begin{equation}
    \operatorname{ME}  \coloneqq  \frac{1}{n} \sum_{i=1}^n \vert y_i - \hat{y}_i \vert,
\end{equation}
for $n$ the number of points, $\hat{y}_i$ the value of the affine least-squares fit in the $i$-th data point and $y_i$ the $i$-th data point.

\FloatBarrier
\subsection{Shock Curve with Endpoints in the Domain}\label{sec: halbkreis}

We consider an example in which the shock curve has both endpoints in the interior of the domain.
We use the two-dimensional Burgers' equation with flux $f=\left( \frac{1}{2} u^2,\frac{1}{2} u^2 \right)^T$ in \cref{eq: IVP} on the domain $[-0.3,0.7]^2$. The unperturbed initial data are given by 
\begin{equation}\label{eq: ini data halbkr u0}
    \unn(x) = \begin{cases}
        \frac{(x_1-x_2)+1- 2\big(2(x_1-x_2)^2-1\big)^3}{5} & \text{for }  \| x \|<\frac{1}{2} \text{ or } x_1+x_2\le 0,\\ 
        \frac{(x_1-x_2)+1}{5} & \text{otherwise},
    \end{cases}
\end{equation}
while the perturbed initial data are given by 
\begin{equation}\label{eq: ini data halbkr ue}
    \uen(x) = (1+2\e) \unn(x).
\end{equation}
In this example, both $\unn$ and $\uen$ are $C^2$ away from the discontinuity curve. 
With perturbation, the jump values are higher, which leads to faster shock propagation. Moreover, the jump values vary along the curve, causing the geometry of the shock curve to change as well. 

The initial shock curve is parametrized by  $\g_0^0(\t)= \Big( \frac{1}{2} \cos\left( (\t- \frac{1}{4}) \pi \right), \frac{1}{2} \sin\left( (\t- \frac{1}{4}) \pi \right) \Big)^T$ and the initial tangent vector is given by $(v_0,r_0)$ with
\begin{equation*}
    \begin{split}
        v_0(x) &= \begin{cases}
        2\frac{(x_1-x_2)+1- 2\big(2(x_1-x_2)^2-1\big)^3}{5} & \text{for }  \| x \|<\frac{1}{2} \text{ or } x_1+x_2\le 0,\\ 
        2\frac{(x_1-x_2)+1}{5} & \text{otherwise},
    \end{cases} \\ \text{ and } r_0 &\equiv 0.
    \end{split}
\end{equation*}
We choose the final time $T=0.2$, the smallest diameter of the grid as $\Delta x=\frac{1}{1280} $ and the number of spline nodes as $N=51$.

\cref{fig: halbkreis evo} shows the time evolution of the unperturbed solution $\nun$, the perturbed solution $\nueob$ and the first-order approximation $\nue$ computed by Algorithm \ref{alg:general} for $t= 0.1, 0.2$ for $\e =0.3$. We see that $\nun$ and $\nueob$ differ not only in their function values in the smooth regions but also in the position of the shock curve. 
The spatially varying jump values lead to a visibly different deformation of the perturbed shock curve in $\nueob$.
Comparing $\nueob$ and $\nue$, we observe differences in the solution values in the region between the shock curve of $\nun$ and the first-order approximation of the perturbed shock curve. Moreover, we clearly observe changes in the solution values in this region along the curve.

\cref{fig: halbkreis error}(a) shows the pointwise difference $\nueob - 
\nue$. Here, we see that the largest discrepancy occurs between the shock curve of the unperturbed solution $\nun$ and that of the perturbed solution $\nueob$.
In \cref{fig: halbkreis error}(b) we show the weighted $L^1$-error defined in \cref{eq: weighted L1 error} for different values of $\e$ at the final time $T=0.2$. For better illustration, we additionally show an affine least-squares fit to the data with a ME of approximately $6.01796 \times 10^{-5}$. Compared to the values of the plotted errors and the grid size $\Delta x$, the mean error is sufficiently small. Thus, the expected linear decay of $e(\e,0.2)$ is observed in this example. This provides numerical evidence that the proposed tangent vector correctly captures the first-order variation of $\ue$.

\captionsetup[subfigure]{justification=raggedright,singlelinecheck=false}

\begin{figure}[H]
\centering

\begin{subfigure}[t]{0.47\textwidth}
    \includegraphics[width=0.83\linewidth]{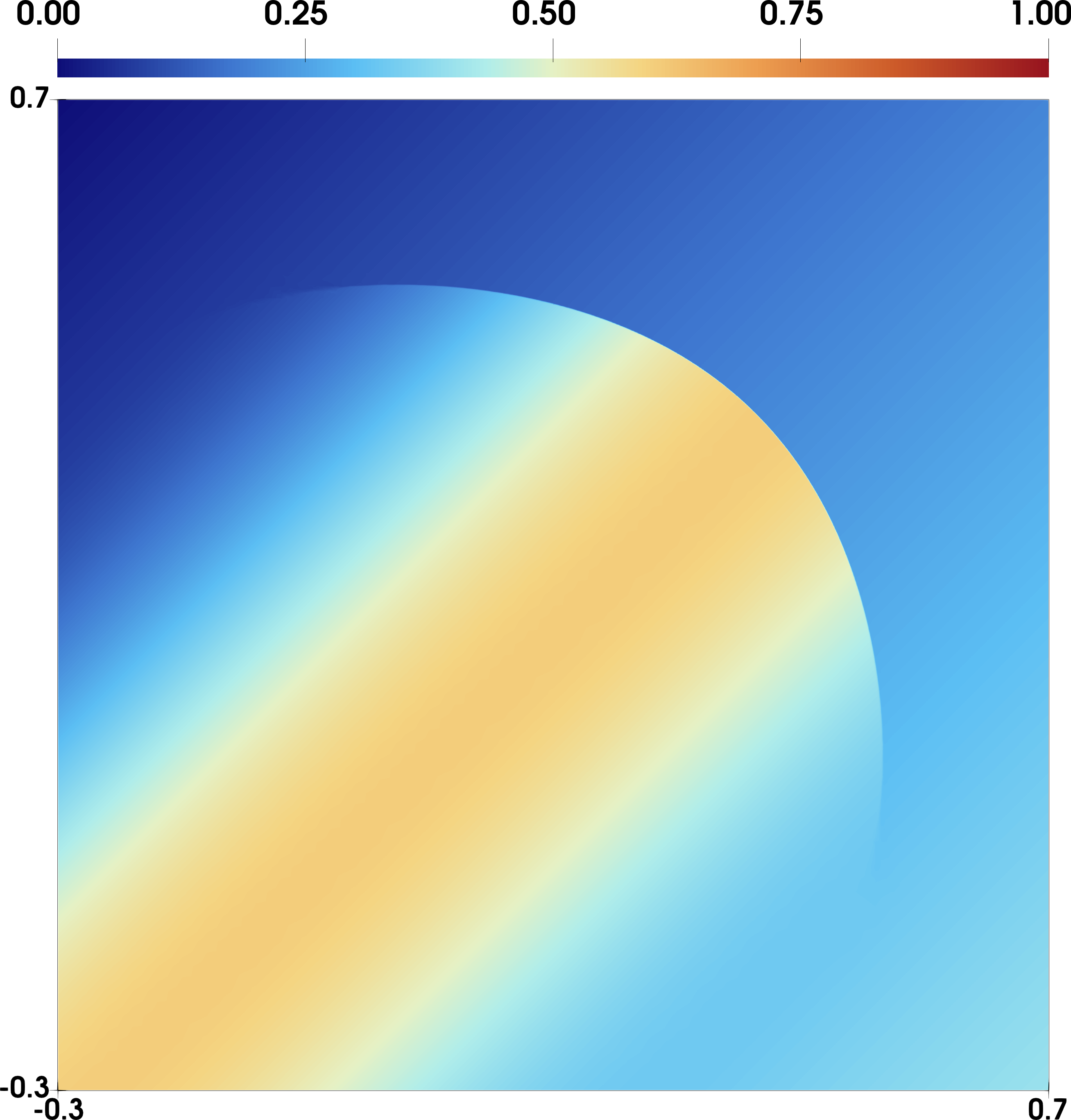}
    \caption{Unperturbed solution $\nun$ at $t=0.1$}
\end{subfigure}
\hfill
\begin{subfigure}[t]{0.47\textwidth}
    \includegraphics[width=0.83\linewidth]{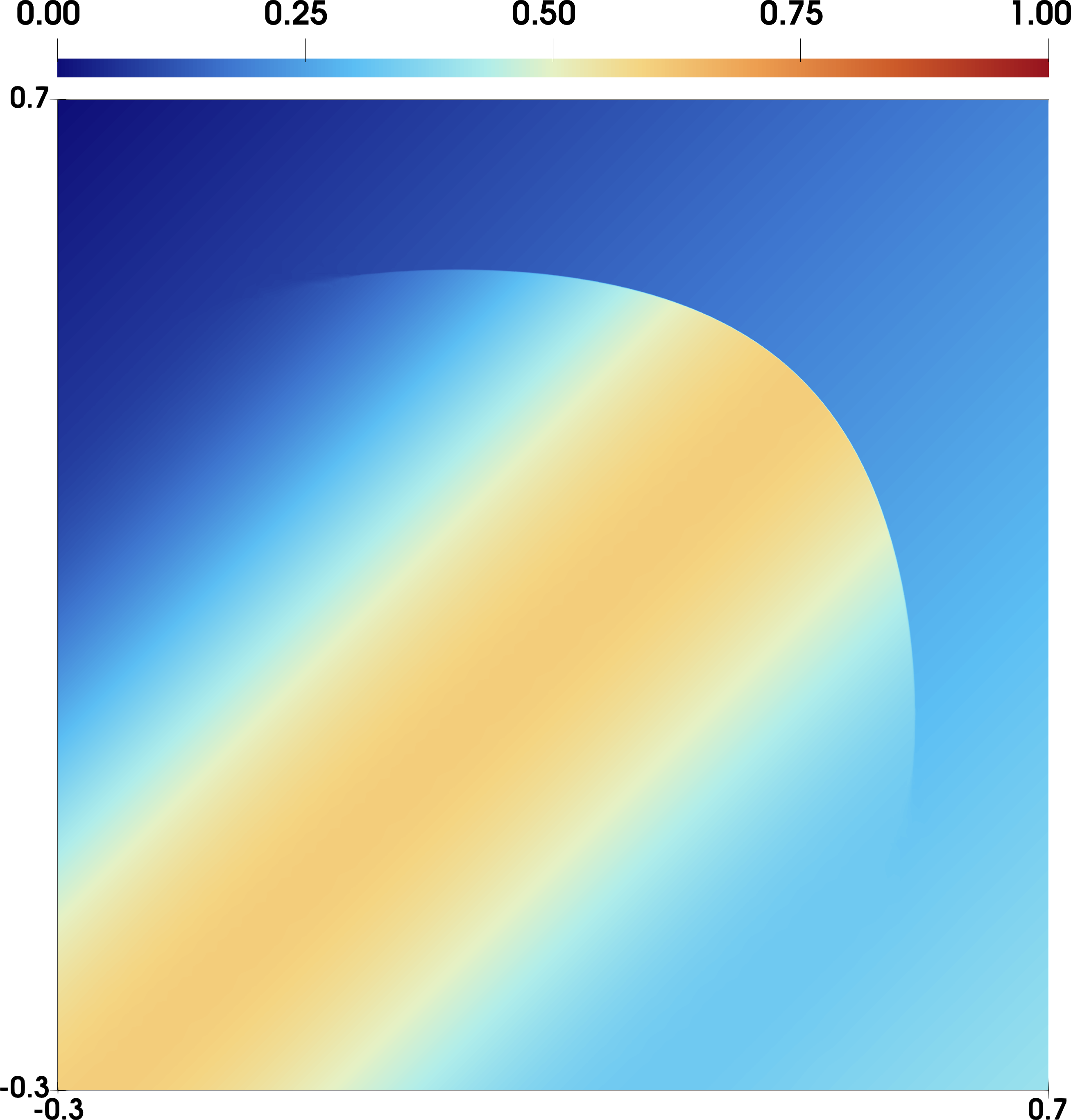}
    \caption{Unperturbed solution $\nun$ at $T=0.2$}
\end{subfigure}

\begin{subfigure}[t]{0.47\textwidth}
    \includegraphics[width=0.83\linewidth]{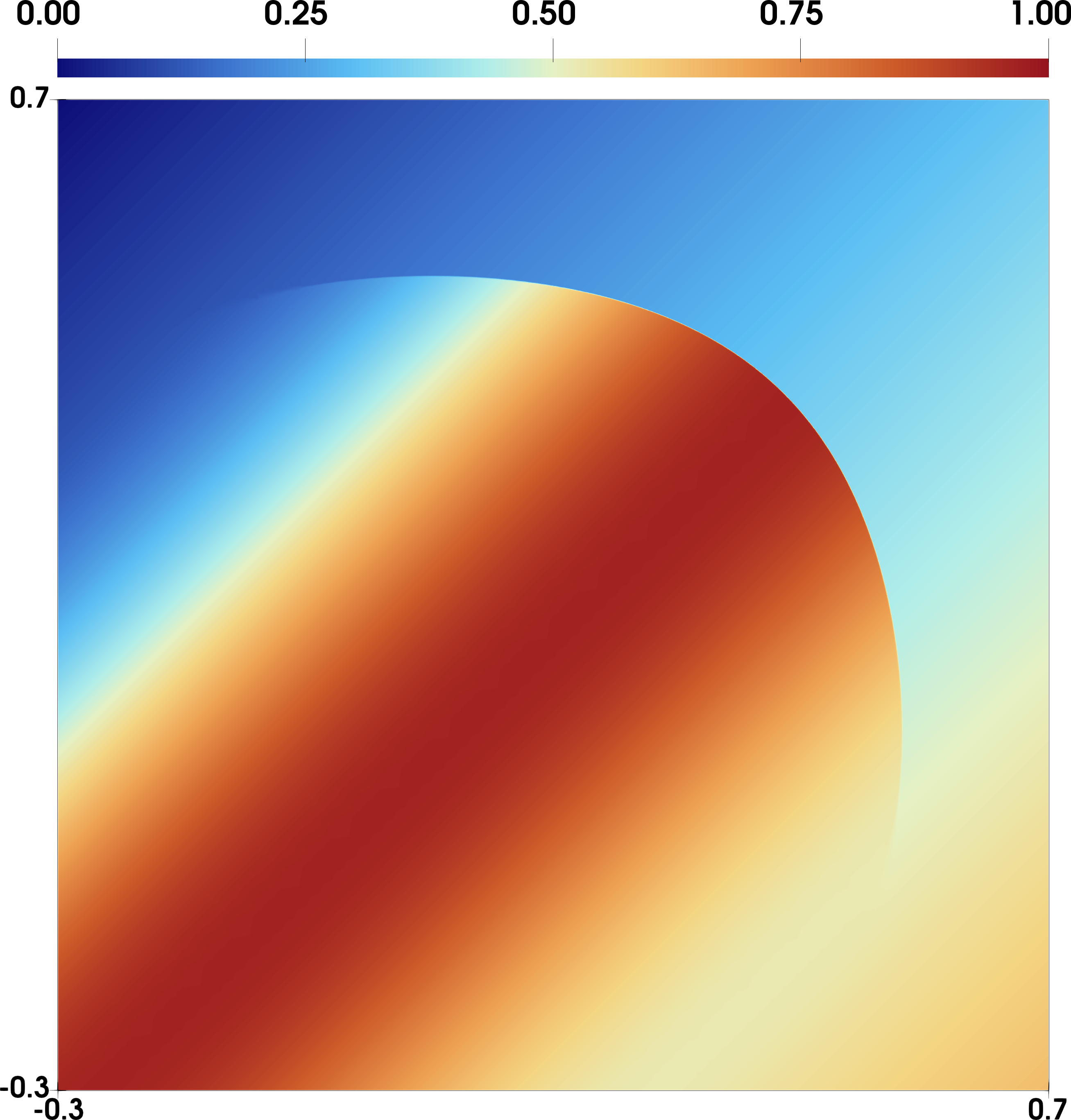}
    \caption{Perturbed solution $\nueob$ at $t=0.1$}
\end{subfigure}
\hfill
\begin{subfigure}[t]{0.47\textwidth}
    \includegraphics[width=0.83\linewidth]{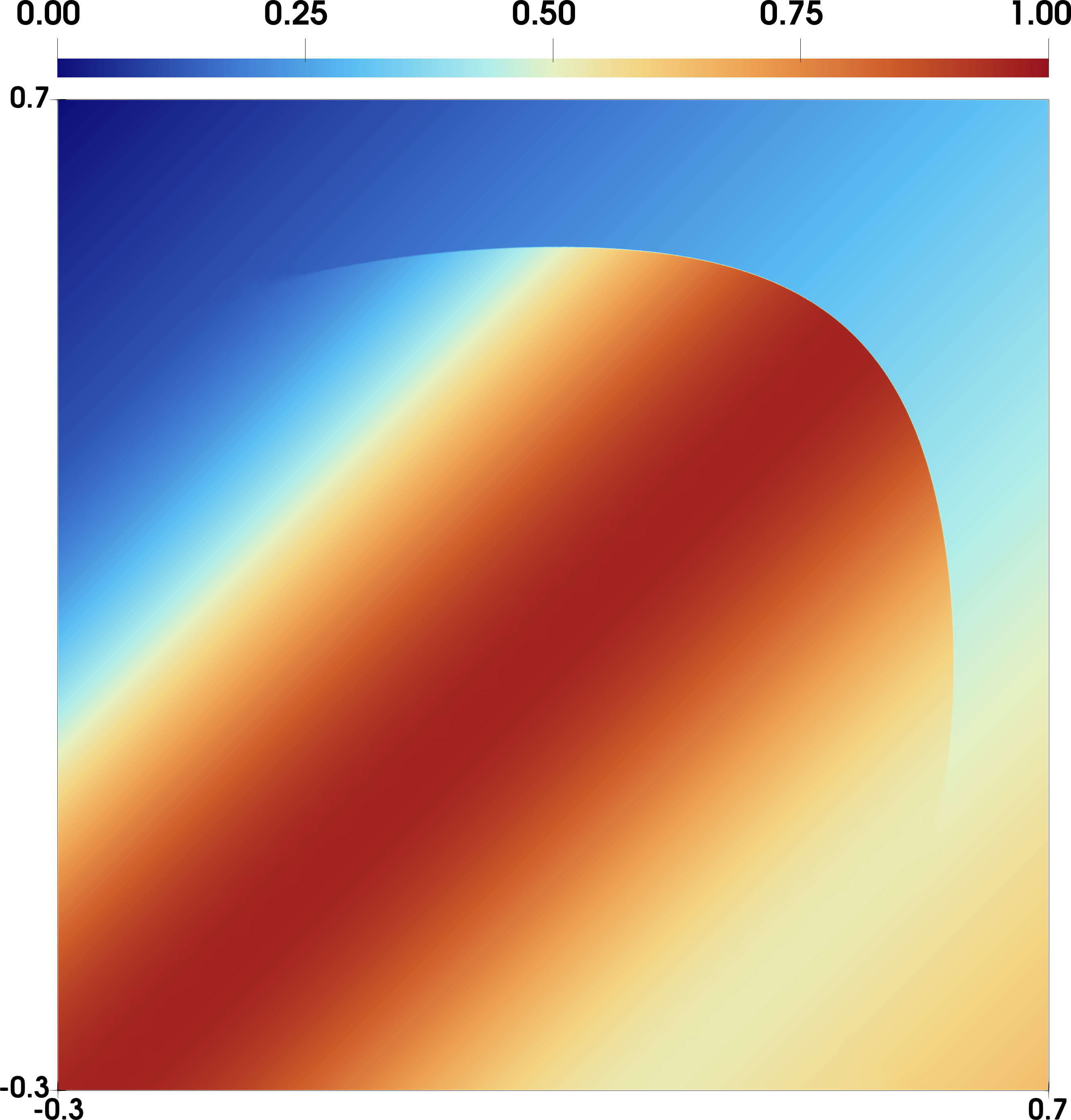}
    \caption{Perturbed solution $\nueob$ at $T=0.2$}
\end{subfigure}

\begin{subfigure}[t]{0.47\textwidth}
    \includegraphics[width=0.83\linewidth]{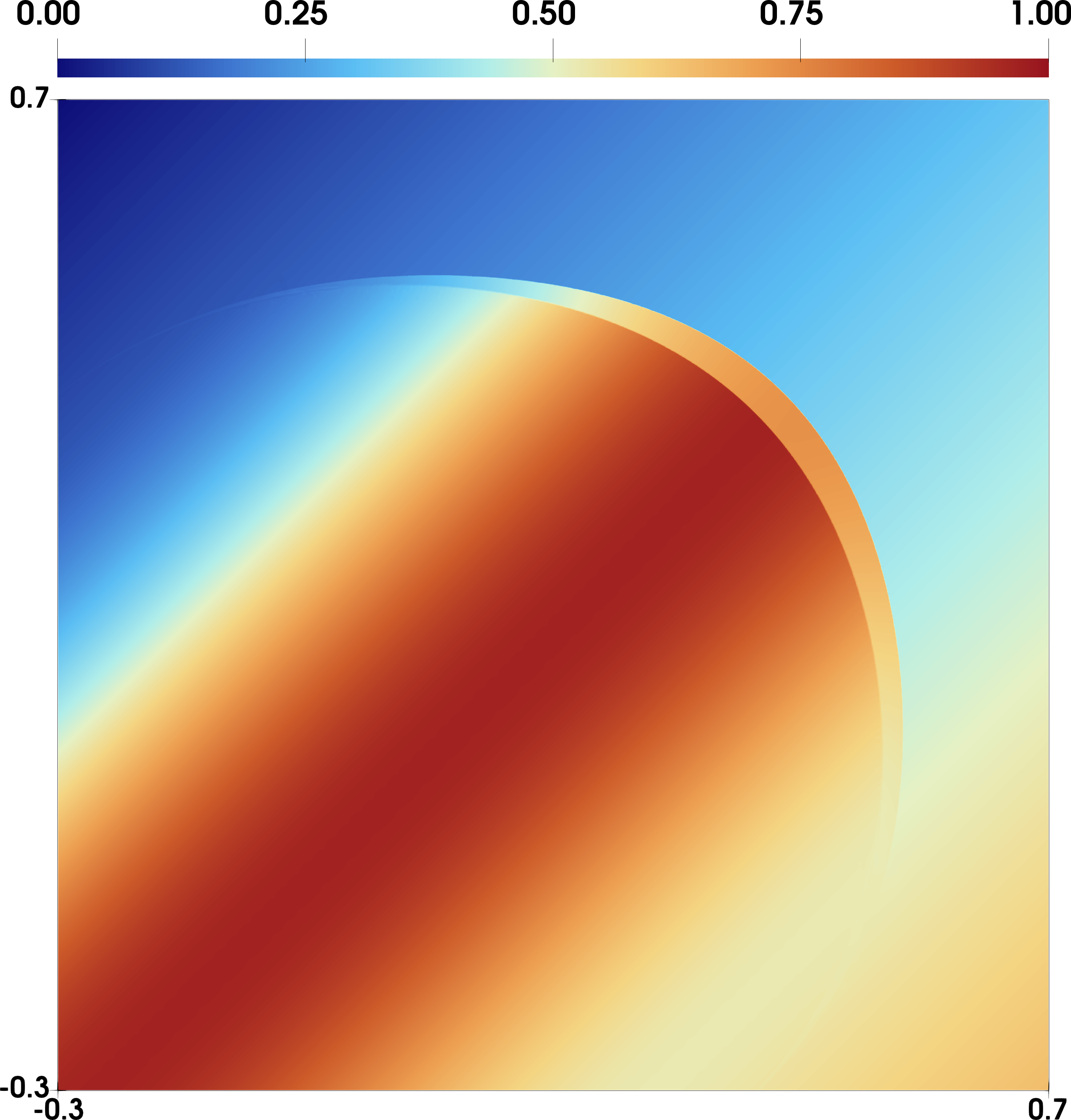}
    \caption{First-order approximation $\nue$ at $t=0.1$}
\end{subfigure}
\hfill
\begin{subfigure}[t]{0.47\textwidth}
    \includegraphics[width=0.83\linewidth]{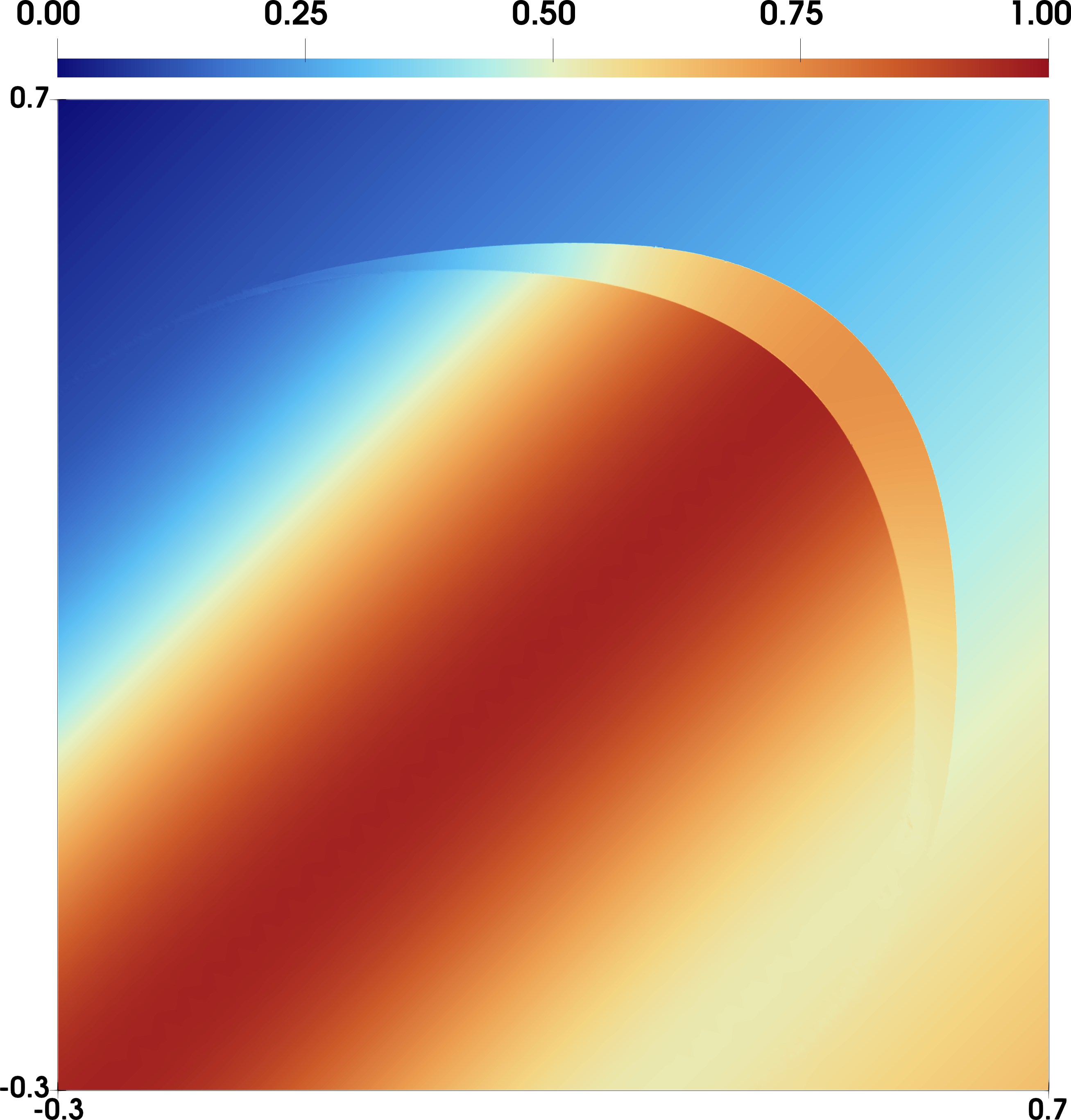}
    \caption{First-order approximation $\nue$ at $T=0.2$}
\end{subfigure}
\caption{Comparison of the numerical approximations of the unperturbed solution $\nun$ to initial data \cref{eq: ini data halbkr u0}, the perturbed solution $\nueob$ to initial data \cref{eq: ini data halbkr ue}, and the first-order approximation $\nue$ computed using Algorithm \ref{alg:general} for $\e=0.3$ at $t = 0.1$ and $T=0.2$.}
\label{fig: halbkreis evo}
\end{figure}
\captionsetup[subfigure]{justification=centering}

\begin{figure}
\centering
\begin{subfigure}[t]{0.44\textwidth}
    \includegraphics[width=\linewidth]{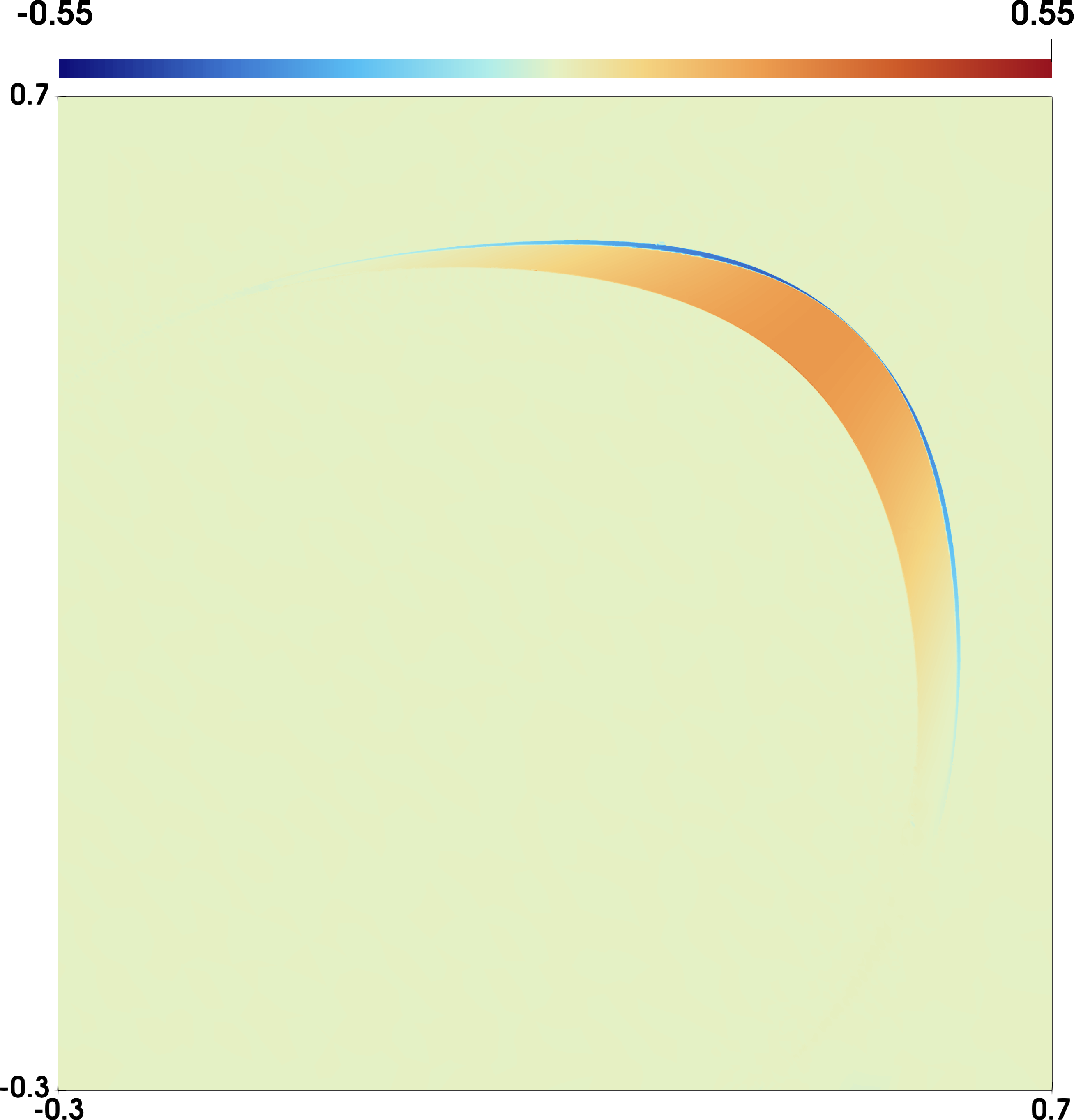}
    \caption{Pointwise difference $\nueob - \nue$}
\end{subfigure}
\hfill
\begin{subfigure}[t]{0.55\textwidth}
    \includegraphics[width=\linewidth]{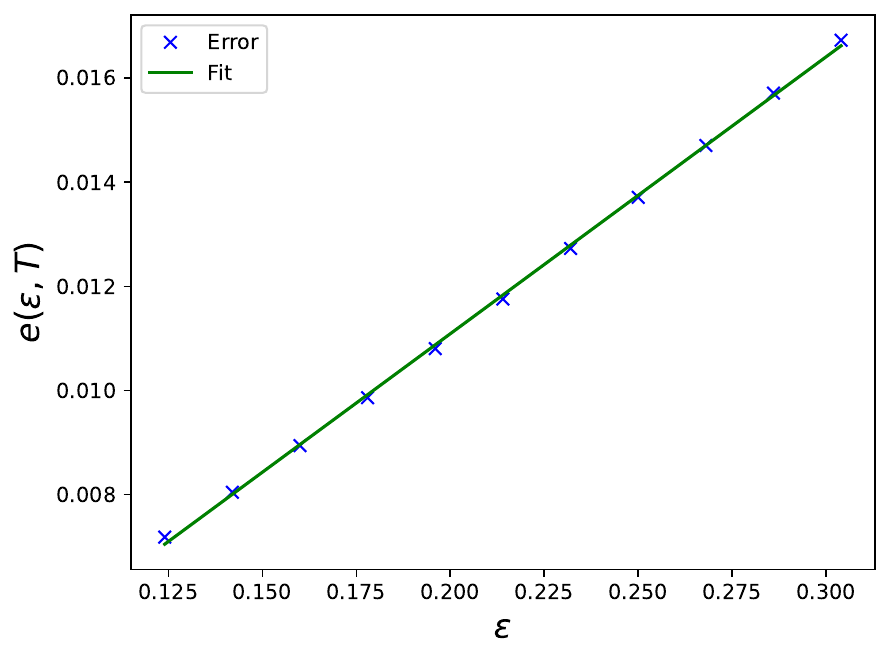}
\caption{Weighted $L^1$-distance $e(\cdot,T)$ between $\nueob$ and $\nue$}
\end{subfigure}

\caption{Difference between the numerical perturbed solution $\nueob$ and the numerical first-order approximation $\nue$ and the weighted $L^1$-distance between them at time $T=0.2$.}
\label{fig: halbkreis error}
\end{figure}

\FloatBarrier
\subsection{Shock Curve Separating the Domain}\label{sec: sinus}
This example illustrates a setting in which the discontinuity curve separates the domain into two connected components.
We consider the balance law with flux $f=\left( \frac{1}{3} (u+2)^3,\frac{1}{4} (u+2)^4 \right)^T$ and source $S(u)=10 u\left( u-1 \right)\left( u-2 \right)$ for \cref{eq: IVP}, on the domain $[-1,1] \times [-0.5,0.5]$. The unperturbed initial data are given by 
\begin{equation}\label{eq: ini data sin u0}
    \unn(x) = \begin{cases}
        \frac{3-2x_1^2}{5} & \text{for } x_2 \le \frac{1}{4}\sin(\pi\left( x_1 +1\right)),\\
        -\frac{1}{5} & \text{otherwise} ,
    \end{cases}
\end{equation}
while the perturbed initial data are given by 
\begin{equation}\label{eq: ini data sin ue}
    \uen(x) = \begin{cases}
        \frac{3+ 4\e -\left( 2+\e \right)x_1^2}{5} & \text{for } x_2 \le \frac{1}{4}\sin(\pi\left( x_1 +1\right)) + \frac{\e}{20} \sqrt{1 + \frac{1}{16} \pi^2  \cos(\pi(x_1+1))^2 },\\
        -\frac{1}{5} & \text{otherwise}.
    \end{cases}
\end{equation}
In this example, the initial discontinuity curve is perturbed as well.
The initial shock curve is parametrized by  $\g_0^0(\t)= \left( -1 + 2\t, \frac{1}{4} \sin(2\t \pi)  \right)^T$ and the initial tangent vector is given by $(v_0,r_0)$ with
\begin{equation*}
    v_0(x) = \begin{cases}
        \frac{4-x_1^2}{5} & \text{for } x_2 \le \frac{1}{4}\sin(\pi\left( x_1 +1\right)),\\
        0 & \text{otherwise},
    \end{cases} \qquad r_0 \equiv \frac{1}{20}.
\end{equation*}
We use $T=0.01$, the smallest diameter of the grid $\Delta x= \frac{1}{640} $ and the number of spline nodes $N=60$. \\
\cref{fig: sinus evo} shows the time evolution of the unperturbed solution $\nun$, the perturbed solution $\nueob$ and the first-order approximation $\nue$ computed by Algorithm \ref{alg:general} with $\e =0.3$ for $t=0.005, 0.01$. In this example, the discontinuity curve is already perturbed at the initial time $t=0$. 
Differences in both the shock curves and the solution values are also visible when comparing $\nun$ and $\nueob$.
The geometry of the shock curve changes as well, due to the variation of the jump values along the curve.

\captionsetup[subfigure]{justification=raggedright,singlelinecheck=false}
\begin{figure}[H]
\centering
\begin{subfigure}[t]{0.49\textwidth}
    \includegraphics[width=\linewidth]{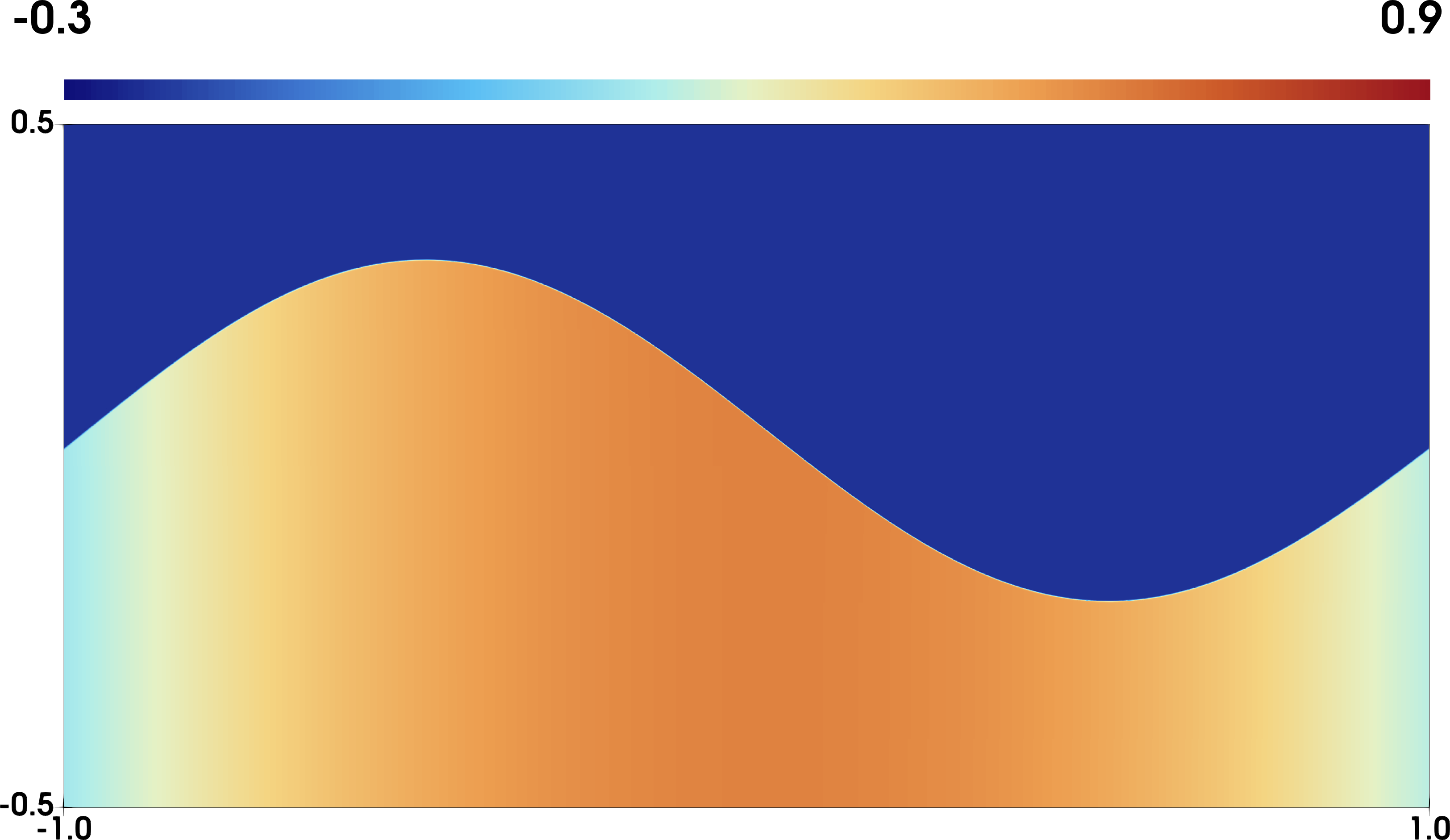}
    \caption{Unperturbed solution $\nun$ at $t=0.005$}
\end{subfigure}
\hfill
\begin{subfigure}[t]{0.49\textwidth}
    \includegraphics[width=\linewidth]{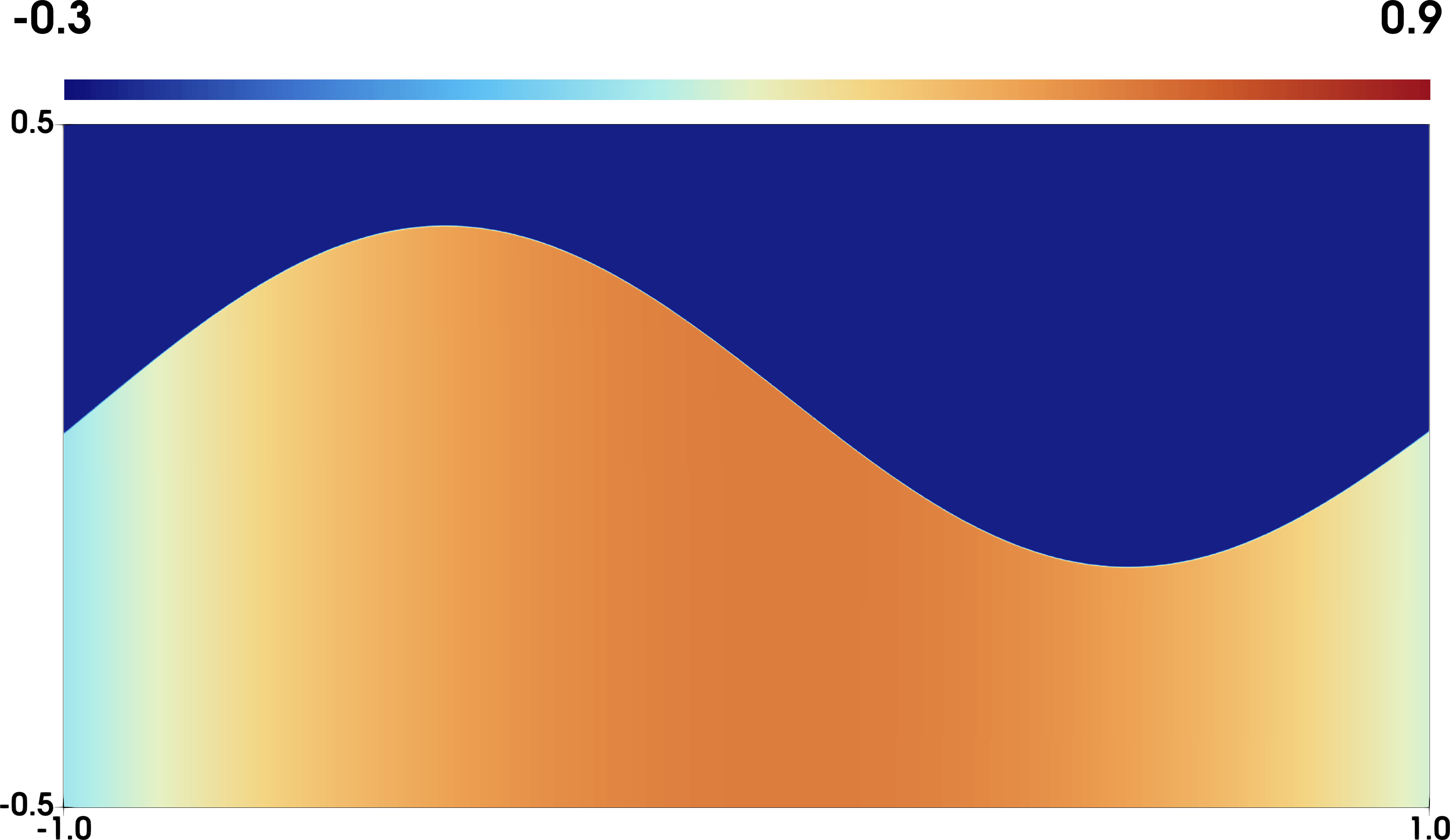}
    \caption{Unperturbed solution $\nun$ at $T=0.01$}
\end{subfigure} 

\begin{subfigure}[t]{0.49\textwidth}
    \includegraphics[width=\linewidth]{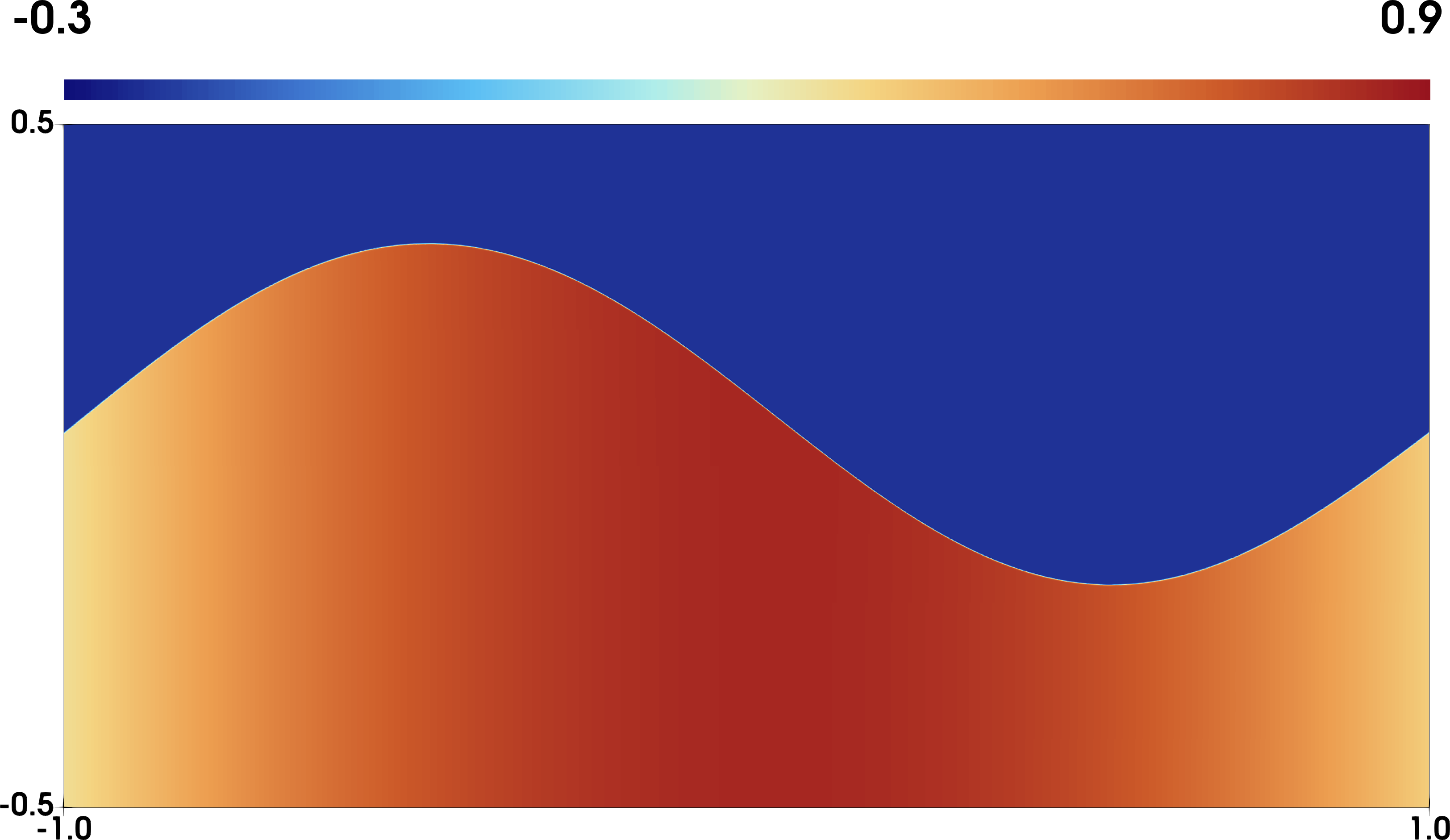}
    \caption{Perturbed solution $\nueob$ at $t=0.005$}
\end{subfigure}
\hfill
\begin{subfigure}[t]{0.49\textwidth}
    \includegraphics[width=\linewidth]{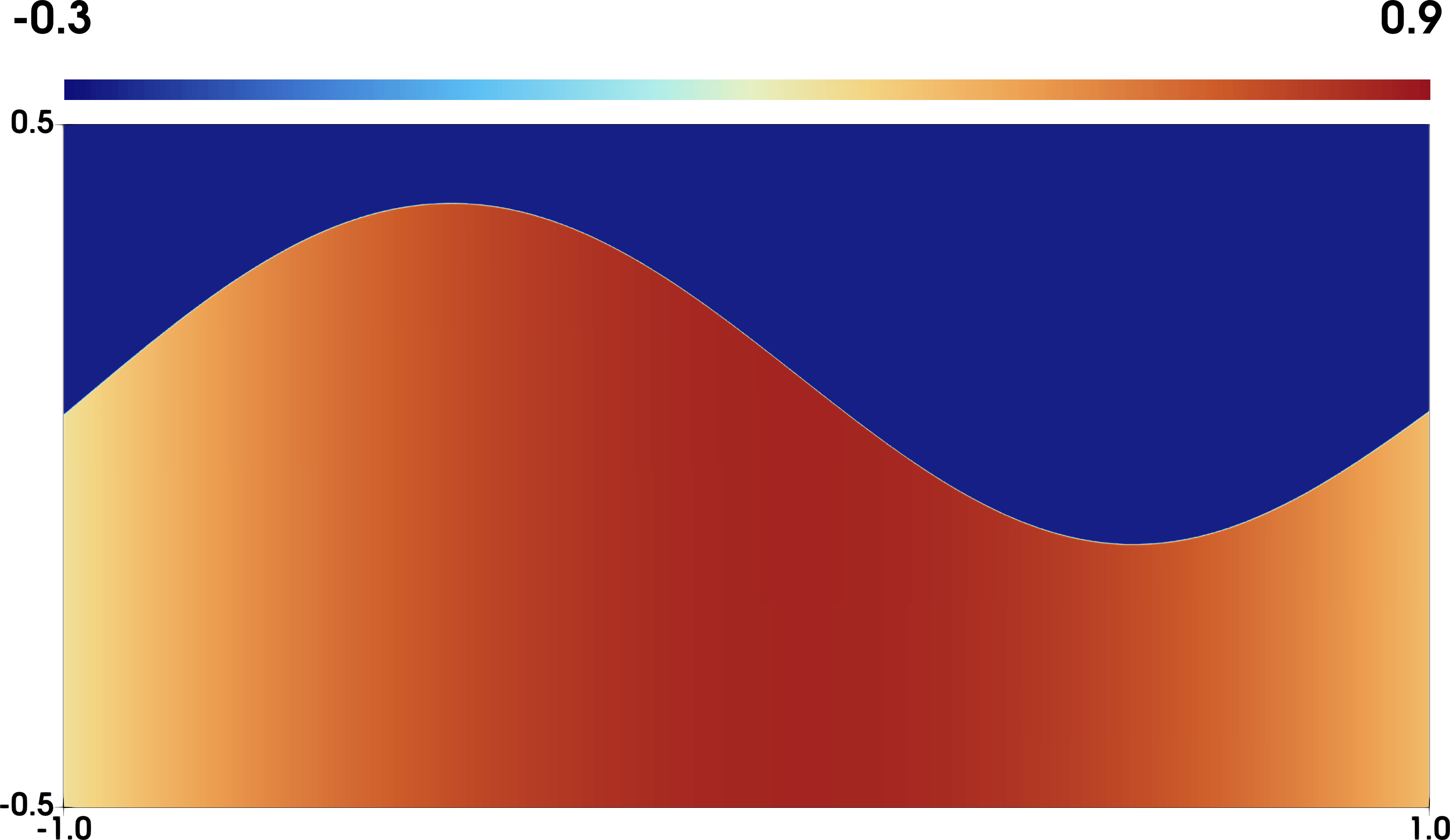}
    \caption{Perturbed solution $\nueob$ at $T=0.01$}
\end{subfigure} 

\begin{subfigure}[t]{0.49\textwidth}
    \includegraphics[width=\linewidth]{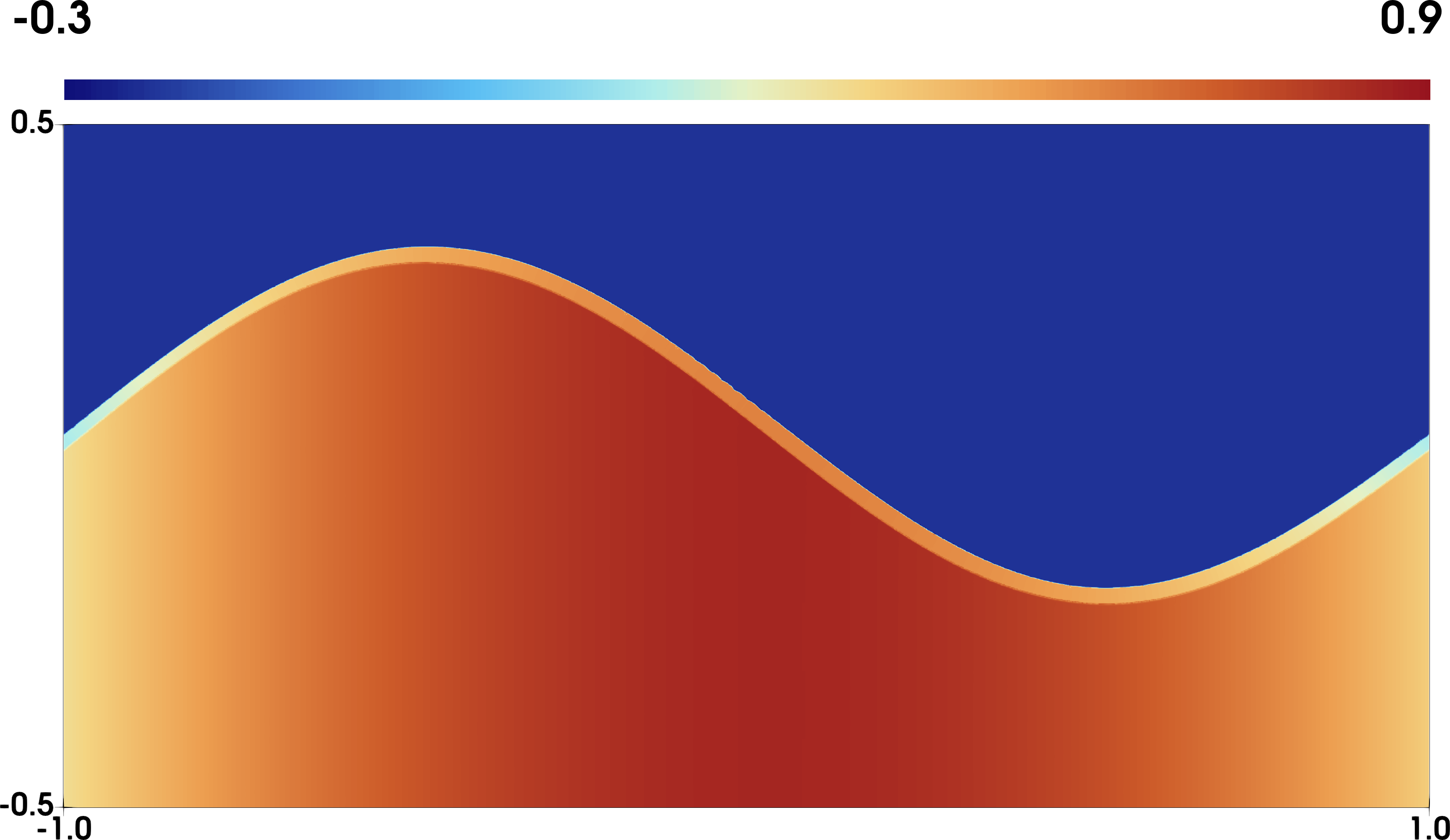}
    \caption{First-order approximation $\nue$ at $t=0.005$}
\end{subfigure}
\hfill
\begin{subfigure}[t]{0.49\textwidth}
    \includegraphics[width=\linewidth]{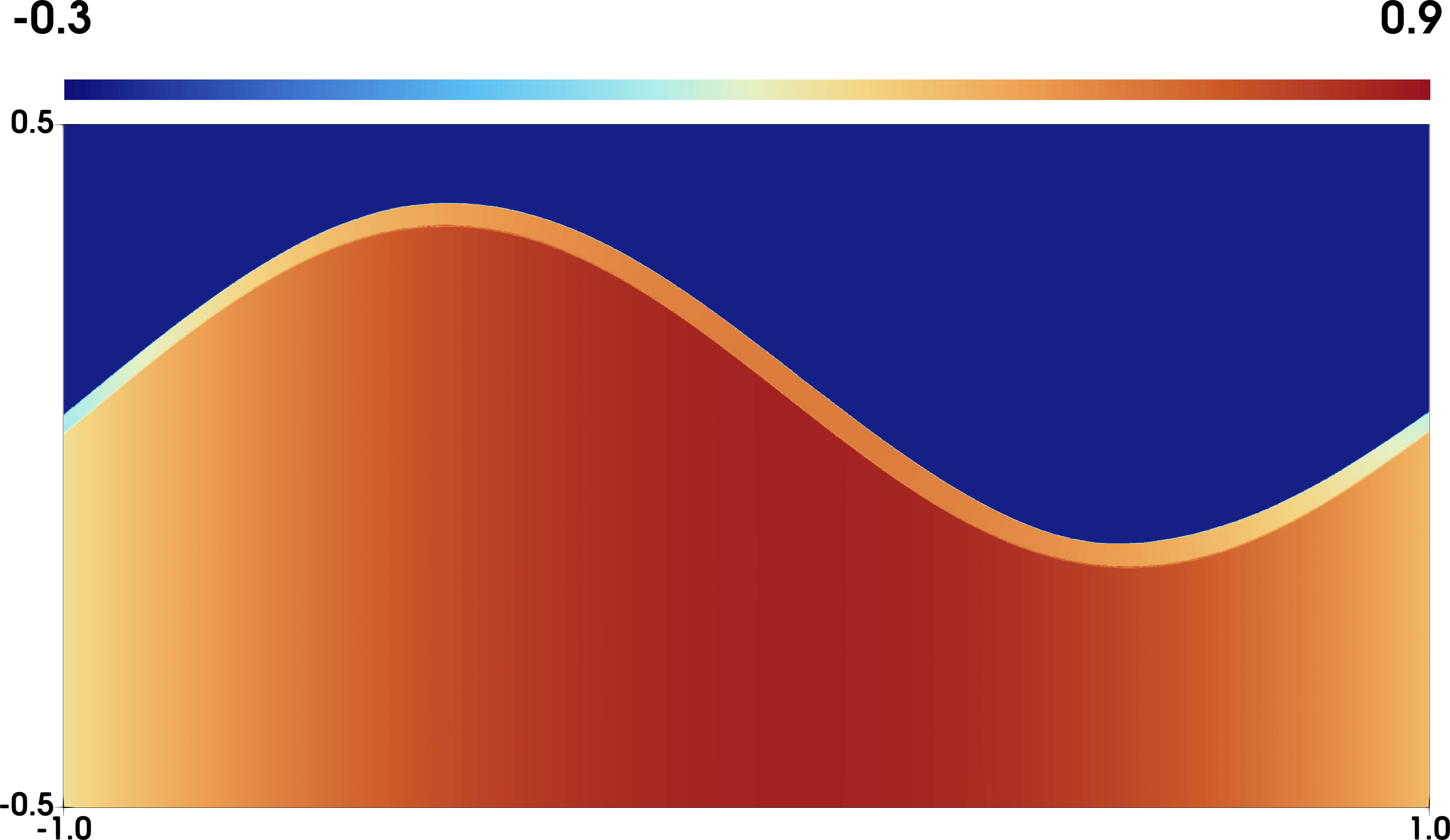}
    \caption{First-order approximation $\nue$ at $T=0.01$}
\end{subfigure}
\caption{Comparison of the numerical approximations of the unperturbed solution $\nun$ to initial data \cref{eq: ini data sin u0}, the perturbed solution $\nueob$ to initial data \cref{eq: ini data sin ue}, and the first-order approximation $\nue$ computed using Algorithm \ref{alg:general} for $\e=0.3$ at $t = 0.005$ and $T=0.01$.}
\label{fig: sinus evo}
\end{figure}
\captionsetup[subfigure]{justification=centering}

\cref{fig: sinus error}(a) shows the difference $\nueob - \nue$. Here, we see that the largest discrepancy lies between the shock curve of the unperturbed solution $\nun$ and that of the perturbed solution $\nueob$.
In \cref{fig: sinus error}(b) we show the weighted $L^1$-error following \cref{eq: weighted L1 error} for different values of $\e$ at final time $T=0.01$. For better illustration, we additionally show an affine least-squares fit to the data with a ME of approximately $2.66988\times 10^{-4}$. 
Compared to the values of the plotted errors and the grid size $\Delta x$, the mean error is sufficiently small. 
The observed behavior of $e(\e,0.01)$ is again consistent with the expected first-order behavior.

~

\begin{figure}[H]
\centering
\begin{subfigure}[t]{0.44\textwidth}
    \raisebox{0.7cm}{\includegraphics[width=\linewidth]{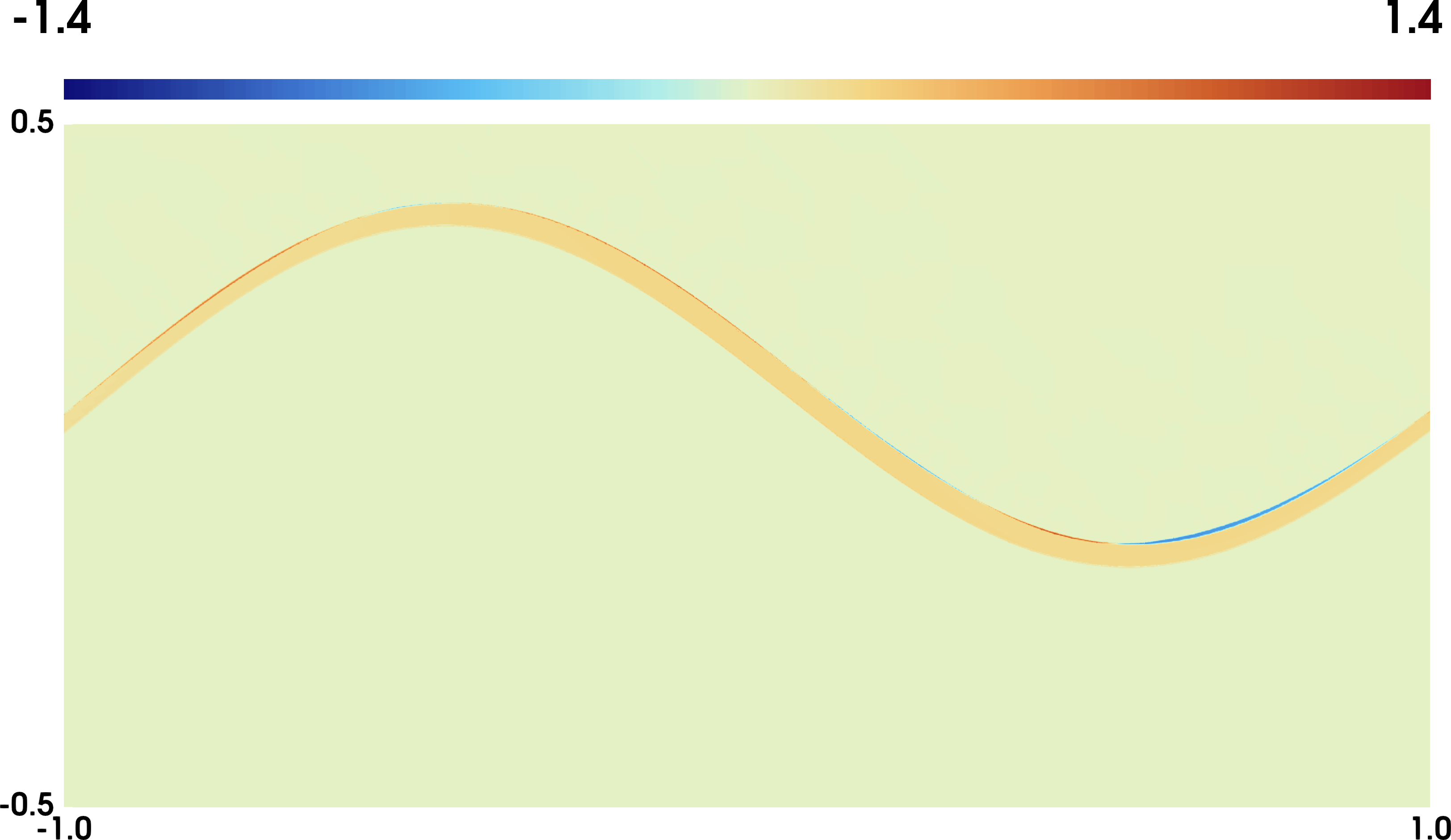}}
    \caption{Pointwise difference $\nueob - \nue$}
\end{subfigure}
\begin{subfigure}[t]{0.55\textwidth}
    \includegraphics[width=\linewidth]{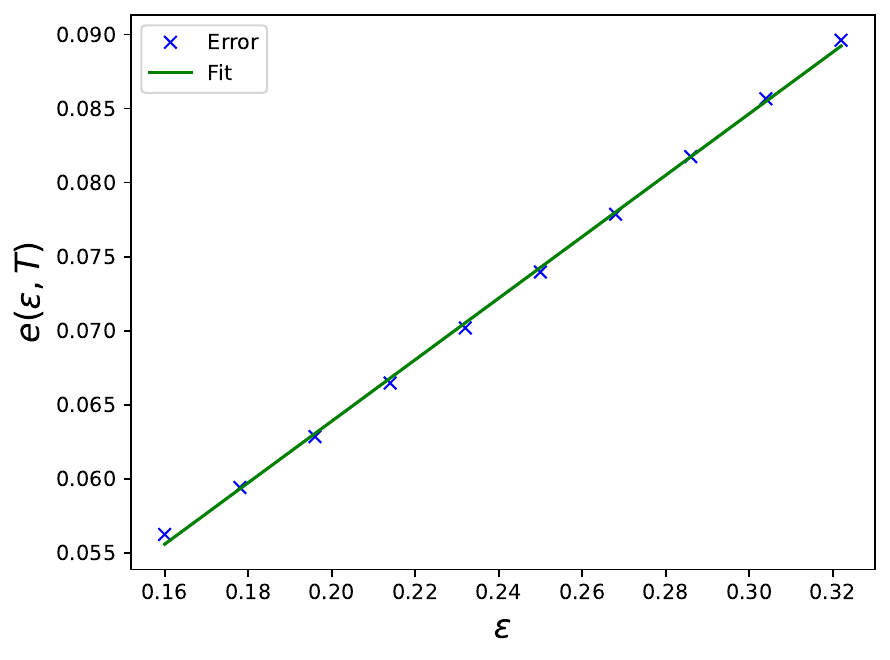}
\caption{Weighted $L^1$-distance $e(\cdot,T)$ between $\nueob$ and $\nue$}
\end{subfigure}
\caption{Difference between the numerical perturbed solution $\nueob$ and the numerical first-order approximation $\nue$ and the weighted $L^1$-distance between them at time $T=0.01$.}
\label{fig: sinus error}
\end{figure}

\FloatBarrier
\section{Conclusion}\label{sec: conclusion}

In this work, we have presented a first-order variational calculus for perturbations of initial data for multi-dimensional balance laws with discontinuities. We have studied the well-definedness of the resulting first-order approximation and have derived evolution equations for the corresponding tangent vector. In addition, we have discussed numerical methods for computing the first-order approximation.

The numerical experiments have shown that the proposed calculus yields a first-order variation for different configurations of the shock hypersurface. In particular, the observed behavior of the normalized $L^1$-error is consistent with the expected convergence as the perturbation parameter tends to zero.

In future work, we aim to prove the existence of tangent vectors and a regular variation of $\ue$.
Furthermore, the numerical representation of the shock curve could be improved by determining its position directly from the adaptive grid.
Although this could improve the accuracy, especially the reparametrization of the shock curve may become challenging. 
To overcome this, we want to represent the shock curves by a level set formulation \cite{Set99}.

\section*{Acknowledgements}
This work was funded by the Deutsche Forschungsgemeinschaft (DFG, German Research Foundation) – 320021702/GRK2326 – Energy, Entropy and Dissipative Dynamics (EDDy) and SPP 2410 (Hyperbolic Balance Laws in Fluid Mechanics: Complexity, Scales, Randomness) within the project 525842915.\\

\bibliographystyle{abbrvurl} 
\bibliography{references.bib}

\appendix

\section{Appendix - Normal Vectors in $\R^d$}\label{sec: normals Rd}
In this section we provide some background information on the normal to the shock curve and its expansion.
From standard theory of hypersurfaces, see e.g. \cite[Section 1.5]{defnormal}, we define the normal
\begin{equation*}
    n^\e:= \frac{\widehat{n^\e}}{ \| \widehat{n^\e} \|  }:=\frac{\bigtimes\left( \partial_{\t_1} \g^\e, \dots, \partial_{\t_{d-1}} \g^\e \right)}{ \| \bigtimes\left( \partial_{\t_1} \g^\e, \dots, \partial_{\t_{d-1}} \g^\e \right) \|  },
\end{equation*}
for $\e\ge 0$, where $ \| \cdot  \|  $ denotes the Euclidean norm and $\bigtimes\left( \cdot \right)$ maps the arguments to a vector that is orthogonal to all the arguments. It therefore generalizes the cross product to higher dimensions and in particular, it is consistent with the cross product in $\R^3$. We denote it as \textit{extended cross product} and define it as in \cite[Section 1.1.4]{defnormal}:
\begin{definition}[Extended cross product]\label{def: extended cross product}
    Let $w_1, \dots, w_{d-1} \in \R^d$ be linearly independent. The extended cross product $\bigtimes\left( w_1,\dots,w_{d-1} \right)$ satisfies 
    \begin{equation*}
        \bigtimes\left( w_1,\dots,w_{d-1} \right) \cdot z = \operatorname{det}\left( w_1, \dots, w_{d-1}, z \right)
    \end{equation*}
    for any $z \in \R^d$.
\end{definition}
Inserting unit vectors for $z$ and using the Laplace expansion for the determinant we obtain a component-wise formula for the extended cross product: 
\begin{equation}\label{eq: expl cross}
    \left( \bigtimes\left( w_1,\dots,w_{d-1} \right) \right)_i = \left( -1 \right)^{i+d} \operatorname{det}\left( (w_1, \dots, w_{d-1})_{\hat{i}} \right)
\end{equation}
 where $( A )_{\hat{i}}$ describes the submatrix of a matrix $A \in \R^{d\times \left( d-1 \right)}$ that results from $A$ by removing the $i$-th row.
From this, we deduce
\begin{equation}\label{eq: nehat comp}
    \widehat{n^\e}_i = \left( -1 \right)^{i+d} \operatorname{det}\left( (\partial_{\t_1} \g^\e, \dots, \partial_{\t_{d-1}} \g^\e)_{\hat{i}} \right) = \left( -1 \right)^{i+d} \operatorname{det}\left( (D_\t\g^\e)_{\hat{i}} \right).
\end{equation}
The norm of $\widehat{n^\e}$ is simplified using the Cauchy-Binet formula, see e.g. \cite{BW89}:
\begin{equation}\label{eq: cauchy binet}
     \| \widehat{n^\e} \| = \sqrt{\operatorname{det}\left( (D_\t\g^\e)^T (D_\t\g^\e) \right)}.
\end{equation}
With this we define the normal as in \cref{eq:definition normal}. 
Next, we discuss the \textbf{expansion of the perturbed normal}.

With the assumed expansion in \cref{eq: gamma eps expansion}, we obtain
\begin{equation*}
    D_\t \g^\e \doteq D_\t \g^0 + \e D_\t(r n^0) = D_\t \g^0 + \e \left( n^0 \otimes \left( \nabla_\t r \right) + r D_\t n^0 \right),
\end{equation*}
where $\otimes$ describes the outer product.
The submatrix of this is written as
\begin{equation*}
    (D_\t \g^\e)_{\hat{i}} \doteq  (D_\t \g^0)_{\hat{i}} + \e \left( (n^0)_{\hat{i}} \otimes \left( \nabla_\t r \right) + r (D_\t n^0)_{\hat{i}} \right).
\end{equation*}
We use the derivative of the determinant, see e.g. \cite{MN19}, to expand each component of $\widehat{n^\e}$ in \cref{eq: nehat comp}: 
\begin{equation*}
    \begin{split}
        \widehat{n^\e}_i  &\doteq \left( -1 \right)^{i+d} \left( \operatorname{det}\left( (D_\t\g^0)_{\hat{i}} \right)  + \e \operatorname{tr}\Big( \operatorname{adj}\left(( D_\t \g^0  )_{\hat{i}}\right)  \big( (n^0)_{\hat{i}}  \otimes \left( \nabla_\t r \right) + r (D_\t n^0)_{\hat{i}} \big)\right)\Big) \\ 
        &= \widehat{n^0}_i + \e \left( -1 \right)^{i+d} \Big( \operatorname{tr}\left( \operatorname{adj}\left(( D_\t \g^0  )_{\hat{i}}\right) \left( (n^0)_{\hat{i}}  \otimes \left( \nabla_\t r \right) \right) \right) + r \operatorname{tr}\left( \operatorname{adj}\left(( D_\t \g^0  )_{\hat{i}}\right) (D_\t n^0)_{\hat{i}} \right) \Big),
    \end{split}
\end{equation*}
where we denote by $\operatorname{adj}(\cdot)$ the adjugate and by $\operatorname{tr}(\cdot)$ the trace. In the second equality we used the linearity of the trace.  Because of the symmetry of the trace, we have \\ $\operatorname{tr}\left( \operatorname{adj}\left(( D_\t \g^0  )_{\hat{i}}\right)  (n^0)_{\hat{i}} \otimes \left( \nabla_\t r \right)  \right) = \left( \nabla_\t r \right) \cdot \big(\operatorname{adj}\left(( D_\t \g^0  )_{\hat{i}}\right) (n^0)_{\hat{i}}\big)$, therefore
\begin{equation*}
    \begin{split}
        \widehat{n^\e}_i  &\doteq \widehat{n^0}_i + \e \left( -1 \right)^{i+d} \Big( \left( \nabla_\t r \right) \cdot \operatorname{adj}\left(( D_\t \g^0  )_{\hat{i}}\right) (n^0)_{\hat{i}} + r  \operatorname{tr}\left( \operatorname{adj}\left(( D_\t \g^0  )_{\hat{i}}\right) (D_\t n^0)_{\hat{i}} \right)\Big).
    \end{split}
\end{equation*}
Consequently, the first-order term for the expansion of $\widehat{n^\e}$ is given by 
\begin{equation*}
    \hat{\eta} := \Bigg(\left( -1 \right)^{i+d} \left( \left( \nabla_\t r \right) \cdot \operatorname{adj}\left(( D_\t \g^0  )_{\hat{i}}\right) (n^0)_{\hat{i}} + r  \operatorname{tr}\left( \operatorname{adj}\left(( D_\t \g^0  )_{\hat{i}}\right) (D_\t n^0)_{\hat{i}} \right)\right)\Bigg)_{i=1}^{d}. 
\end{equation*}
We rewrite this in a simplified form:
\begin{equation*}
    \hat{\eta} := H \left( \nabla_\t \r \right) +  h\r,
\end{equation*}
with $H \in \R^{d \times \left( d-1 \right)}$ and $h \in \R^d$ given as
\begin{equation}\label{eq: def H h}
    H \! \coloneq \! \Bigg(\!\! \left( -1 \right)^{i+d} \left( \operatorname{adj}\left(( D_\t \g^0  )_{\hat{i}}\right) (n^0)_{\hat{i}} \right)^T \!\! \Bigg)_{i=1}^{d} \!\text{and } h \!\coloneq \!\Bigg( \!\! \left( -1 \right)^{i+d} \operatorname{tr}\left( \operatorname{adj}\left(( D_\t \g^0  )_{\hat{i}}\right) (D_\t n^0)_{\hat{i}} \right) \!\!\Bigg)_{i=1}^{d}.
\end{equation}
Next, we discuss some relations of $H$ and $h$ to the surface $\operatorname{Im}(\g^0(\cdot,t))$, starting with $h$:\\
Since $0=\partial_{\t_i}\left( n^0 \cdot n^0 \right) = 2 n^0  \cdot \left(\partial_{\t_i} n^0 \right)$ for $i=1, \dots , d-1$, we know that every column of $D_\t n^0$ is tangential to $\operatorname{Im}(\g^0(\cdot,t))$ for every $t$. Since the columns of $D_\t\g^0$ are also tangential to $\operatorname{Im}(\g^0(\cdot,t))$, we can find a matrix $A \in \R^{\left( d-1 \right) \times \left( d-1 \right)}$ with 
\begin{equation*}
    D_\t n^0 = (D_\t\g^0) A \quad \text{and} \quad (D_\t n^0)_{\hat{i}} = (D_\t\g^0)_{\hat{i}} A.
\end{equation*}
Plugging this into \cref{eq: def H h}, we obtain 
\begin{equation*}
    h \!\coloneq \!\Bigg( \! \left( -1 \right)^{i+d} \operatorname{tr}\left( \operatorname{adj}\left(( D_\t \g^0  )_{\hat{i}}\right) (D_\t\g^0)_{\hat{i}} A \right) \!\Bigg)_{i=1}^{d} = \Bigg( \! \left( -1 \right)^{i+d} \operatorname{det}\left( (D_\t\g^0)_{\hat{i}} \right)\operatorname{tr}\left( A \right) \!\Bigg)_{i=1}^{d},
\end{equation*}
where we used the linearity of the trace and the identity $\operatorname{adj}(M)M=\operatorname{det}(M)I$ for\\ \mbox{$M \in \R^{\left( d-1 \right)\times \left( d-1 \right)}$} and $I$ the identity matrix.
From \cref{eq: nehat comp} we deduce
\begin{equation*}
    h = \widehat{n^0} \operatorname{tr}(A).
\end{equation*}
For $H$ we prove that every column is perpendicular to $n^0$.
First, we note that each entry of the matrix $H$ can be written as
\begin{equation*}
    H_{ik} = \left( -1 \right)^{i+d} \sum_{l=1}^{d-1} \operatorname{adj}\left(( D_\t \g^0  )_{\hat{i}}\right)_{kl} ((n^0)_{\hat{i}})_l.
\end{equation*}
To connect this to a geometric property of the surface, we first define $A^{\left( k \right)}\in \R^{d \times (d-1)}$ as the matrix that results from a matrix $A \in \R^{d \times (d-1)}$ by replacing the $k$-th column with $n^0$. Now consider the determinant of the submatrix of $((D_\t \g^0)^{\left( k \right)})_{\hat{i}}$ that can be rewritten using the Laplace expansion with regard to the $k$-th column: 
\begin{equation*}   
    \operatorname{det}\big(((D_\t \g^0)^{\left( k \right)})_{\hat{i}} \big) = \sum_{l=1}^{d-1}   \operatorname{adj}\left(( D_\t \g^0  )_{\hat{i}}\right)_{kl} \left( ( n^0 )_{\hat{i}} \right)_l. 
\end{equation*} 
Thus, we have 
\begin{equation*}
    H_{ik} = \left( -1 \right)^{i+d} \operatorname{det}\big(((D_\t \g^0)^{\left( k \right)})_{\hat{i}} \big) = \left( \bigtimes\left( \partial_{\t_1} \g^0, \dots,\partial_{\t_{k-1}} \g^0,n^0,\partial_{\t_{k+1}} \g^0 , \dots , \partial_{\t_{d-1}} \g^0 \right) \right)_i,
\end{equation*}
where we used the identity in \cref{eq: expl cross}.
Consequently, for every column of $H$, we obtain
\begin{equation*}
    H_{\cdot k } = \bigtimes\left( \partial_{\t_1} \g^0, \dots,\partial_{\t_{k-1}} \g^0,n^0,\partial_{\t_{k+1}} \g^0 , \dots , \partial_{\t_{d-1}} \g^0 \right), \quad k=1,...,d-1,
\end{equation*}
which is perpendicular to $n^0$ by \cref{def: extended cross product}. 

Finally, we discuss the expansion of the normalized $n^\e$ in \cref{eq:definition normal}. We have
\begin{equation}
    \begin{split}
        n^\e &\doteq \frac{\widehat{n^0} + \e \hat{\eta}}{ \| \widehat{n^0} + \e \hat{\eta} \|  } = \frac{\widehat{n^0}}{ \| \widehat{n^0} \|  } + \e \left( \frac{\hat{\eta}  \| \widehat{n^0} + \e \hat{\eta} \| - \left( \widehat{n^0} + \e \hat{\eta} \right) \left( \frac{\widehat{n^0} + \e \hat{\eta}}{ \| \widehat{n^0} + \e \hat{\eta} \|  } \cdot \hat{\eta} \right) }{{ \| \widehat{n^0} + \e \hat{\eta} \|^2  }} \Bigg \vert_{\e=0}\right) \\
        &= n^0 + \e \frac{1}{\| \widehat{n^0} \|}\left( \hat{\eta} - n^0 \left( n^0 \cdot \hat{\eta} \right) \right) =: n^0 + \e \eta.
    \end{split}
\end{equation}
Because of the relations that we discussed before, we know that $h$ is a multiple of $n^0$, which leads to 
\begin{equation*}
    h - n^0 (n^0 \cdot h)=0.
\end{equation*}
Furthermore, we know that each column of $H$ is perpendicular to $n^0$, which results in 
\begin{equation*}
    n^0 \cdot (H \left( \nabla_\t r \right)) = 0.
\end{equation*}
Thus, together with \cref{eq: cauchy binet}, we obtain for the first-order variation of $n^\e$: 
\begin{equation*}
    \eta= \frac{1}{\| \widehat{n^0} \|}\left( \hat{\eta} - n^0 \left( n^0 \cdot \hat{\eta} \right) \right) = \frac{1}{\sqrt{\operatorname{det}\left( (D_\t\g^0)^T (D_\t\g^0) \right)}} H \left( \nabla_\t r \right),
\end{equation*}
as defined in \cref{eq: def eta}.

\section{Appendix - Supplements to \cref{theorem}}

The following definition is a multi-dimensional extension of definitions in \cite{osti_482447,Bressan2000aa,HERTY2023473}. 
\begin{definition}[Broad solution]\label{broad solution}
\label{def:broad-solution-multidim}
    Consider the quasi–linear scalar partial differential equation
    \begin{equation}\label{eq:broad-sol-multidim}
        u_t(t,x) + a(t,x)\cdot \nabla_x u(t,x) = h(t,x,u),
        \qquad (t,x)\in [0,T]\times \mathbb{R}^d,
    \end{equation}
    where $a:[0,T]\times \mathbb{R}^d\to \mathbb{R}^d$ is Lipschitz continuous and
    $h:[0,T]\times \mathbb{R}^d\times \mathbb{R}\to \mathbb{R}$ is measurable with respect to
    $(t,x)$ and Lipschitz continuous with respect to $u$.
    Assume an initial condition $u(0,x)=u_0(x), u_0\in L^1(\mathbb{R}^d,\R)$. For $(\tau,\xi)\in [0,T]\times \mathbb{R}^d$, denote by $t\mapsto y(t;\tau,\xi)$ the solution to the Cauchy problem
    \begin{equation*}
        \frac{d}{dt}y(t)=a(t,y(t)), \qquad y(\tau)=\xi.
    \end{equation*}
    A locally integrable function $u\in L^1_{\mathrm{loc}}([0,T]\times \mathbb{R}^d,\R)$ fulfilling 
    \begin{equation*}
        \frac{d}{dt}u(t,y(t;\tau,\xi)) = h(t,y(t;\tau,\xi),u(t,y(t;\tau,\xi)))
    \end{equation*}
    is called a broad solution to \eqref{eq:broad-sol-multidim} if, for almost every
    $(\tau,\xi)\in [0,T]\times \mathbb{R}^d$, the following holds
    \begin{equation*}
        u(\tau,\xi) = u_0(y(0;\tau,\xi)) + \int_0^\tau h\bigl(s,y(s;\tau,\xi),u(s,y(s;\tau,\xi))\bigr)\,ds.
    \end{equation*}
\end{definition}

The following Corollary follows directly from \cref{theorem}
\begin{corollary}[Conservative form of evolution equation \eqref{eq:evolution r} for $\r$]\label{cor:conservative r eq}
    For $d=2$, we rewrite the evolution equation \eqref{eq:evolution r} for $\r$ in conservative form: 
    \begin{equation}
    \begin{split}
            {\partial_t \r(\t,t)} + \partial_\t\left( G(\t,t) { \r(\t,t)}  \right)= \tilde{G}(\t,t)  {\r(\t,t)} + \hat{g}(\t,t),  \quad  {r}(\cdot,0 )= {r_0}(\cdot ),
    \end{split}
\end{equation}
    where $G$, $\tilde{G}$ and $\hat{g}$ are given by
    \begin{equation}
    \begin{split}
        G& \coloneqq  \frac{1}{ \| \partial_\t \g^0 \|^2 } \frac{[f(\un)]}{[\un]} \cdot \left( \partial_\t \g^0 \right), \\
        \tilde{G}& \coloneqq  \frac{n^0}{[\un]^2} \cdot \Big( [\un]\left[ f'(\un) \left( (\nabla_x \un) \cdot n^0 \right) \right] - \left[ f(\un)\right] \left( [\nabla_x \un] \cdot n^0 \right) \Big)\\
        &\hspace{0.6cm} + \frac{1}{ \| \partial_\t \g^0 \|^2 }\left( n^0 \cdot \frac{[f(\un)]}{[\un]}\right) \left( n^0 \cdot \left( \partial_\t^2 \g^0 \right) \right)\\
        &\hspace{0.6cm}+ \frac{1}{[\un]^2} \frac{\partial_\t \g^0}{\| \partial_\t \g^0 \|^2} \cdot \left( [f'(\un) \left( (\nabla_x \un) \cdot \left( \partial_\t \g^0 \right) \right)] [\un]- [f(\un)]\left( [\nabla_x \un] \cdot \left( \partial_\t \g^0 \right) \right) \right)\\
        &\hspace{0.6cm} - \frac{1}{ \| \partial_\t \g^0 \|^4 }\left( \left( \partial_\t \g^0 \right) \cdot \frac{[f(\un)]}{[\un]}  \right) \left( \left( \partial_\t \g^0 \right)\cdot \left( \partial_\t^2 \g^0 \right) \right),\\
        \hat{g}& \coloneqq  \frac{n^0}{[\un]^2} \cdot \left( [\un] [f'(\un) {v}] - [f(\un)] [{v}] \right).
    \end{split}
    \end{equation}
\end{corollary}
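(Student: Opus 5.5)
The plan is to specialize \cref{eq:evolution r} to $d=2$, identify $g$ with $G$, and then move the transport term into divergence form. The difference between the advective and conservative forms is then absorbed into the zeroth-order coefficient. Throughout I work on $\D\setminus E$, where $[\un]\neq 0$. I also use that $\g^0(\cdot,t)\in C^2$ and that $\un$ is $C^1$ up to the shock from each side (\cref{prop: gamma c2}), so that all one-sided traces are $C^1$ in $\t$.

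\emph{Step 1 (identify $g$).} For $d=2$ the matrix $H$ has a single column. By Appendix \ref{sec: normals Rd} this column equals $\bigtimes(n^0)$, the extended cross product of the single vector $n^0$. In $\R^2$, \cref{def: extended cross product} together with \cref{eq: expl cross} gives $\bigtimes(w)=\rotmat w$. Using \cref{eq: normal 2d} and $\rotmat^2=-I$, we get
\[
H=\rotmat n^0=-\frac{\partial_\t\g^0}{\|\partial_\t\g^0\|}.
\]
Moreover $\sqrt{\det((D_\t\g^0)^TD_\t\g^0)}=\|\partial_\t\g^0\|$. Substituting both into \cref{eq:evolution r: functions} yields
\[
g=\frac{1}{\|\partial_\t\g^0\|^2}\,\frac{[f(\un)]}{[\un]}\cdot\partial_\t\g^0=G,
\]
which is consistent with the two-dimensional formula for $\eta$ in \cref{sec: 2d reduction}.

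\emph{Step 2 (conservative rewriting).} I write $g\,\partial_\t\r=\partial_\t(G\r)-(\partial_\t G)\r$. This turns the equation into the claimed conservative form with $\tilde G=\tilde g+\partial_\t G$, and $\hat g$ is left unchanged. The first line of $\tilde G$ is exactly $\tilde g$. It therefore remains to show that $\partial_\t G$ equals the sum of the remaining three lines.

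\emph{Step 3 (compute $\partial_\t G$).} Apply the product rule to $\frac{[f(\un)]}{[\un]}\cdot\frac{\partial_\t\g^0}{\|\partial_\t\g^0\|^2}$.

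For the first factor, I differentiate the traces along the curve using the chain rule on each side:
\[
\partial_\t[\un]=[\nabla_x\un\cdot\partial_\t\g^0],\qquad \partial_\t[f(\un)]=[f'(\un)(\nabla_x\un\cdot\partial_\t\g^0)].
\]
The quotient rule then produces the third line of $\tilde G$.

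For the second factor,
\[
\partial_\t\Big(\frac{\partial_\t\g^0}{\|\partial_\t\g^0\|^2}\Big)=\frac{\partial_\t^2\g^0}{\|\partial_\t\g^0\|^2}-\frac{2\,\partial_\t\g^0\,(\partial_\t\g^0\cdot\partial_\t^2\g^0)}{\|\partial_\t\g^0\|^4}.
\]
I then decompose $\partial_\t^2\g^0$ in the orthogonal frame $\{n^0,\partial_\t\g^0/\|\partial_\t\g^0\|\}$:
\[
\partial_\t^2\g^0=(n^0\cdot\partial_\t^2\g^0)\,n^0+\frac{\partial_\t\g^0\cdot\partial_\t^2\g^0}{\|\partial_\t\g^0\|^2}\,\partial_\t\g^0 .
\]
Dotting the result with $[f(\un)]/[\un]$ gives the normal-curvature term, which is the second line of $\tilde G$. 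The tangential contribution enters with coefficient $+1$ from the first term and $-2$ from the second, so it combines to the fourth line with coefficient $-1$.

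\emph{Main obstacle.} The algebra is routine. The delicate point is justifying $\partial_\t[\,\cdot\,]=[\partial_\t\,\cdot\,]$, i.e.\ that one-sided limits commute with differentiation along $\g^0$. I would settle this using one-sided $C^1$ regularity of $\un$ up to $\operatorname{Im}(\g^0)$ from the characteristic construction behind \cref{prop: gamma c2}. I would also need $[\un]$ bounded away from zero on compact subsets of $\D\setminus E$, so that the division is legitimate. Near $E$ the identity is understood only where the coefficients are defined, which is harmless because the $\r$-term in \cref{eq: reg vari} vanishes there.
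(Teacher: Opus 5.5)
Your proposal is correct and is essentially the paper's own argument, which the paper only sketches (it says the corollary ``follows directly from \cref{theorem}''): for $d=2$ one has $H=-\partial_\t\g^0/\|\partial_\t\g^0\|$, so $g=G$, and writing $g\,\partial_\t\r=\partial_\t(G\r)-(\partial_\t G)\r$ gives $\tilde G=\tilde g+\partial_\t G$. Your product-rule computation, using the orthogonal decomposition of $\partial_\t^2\g^0$ so that the tangential coefficients $+1$ and $-2$ combine to $-1$, reproduces the remaining three lines of $\tilde G$ exactly.
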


\section{Proof of \cref{prop: lec_zuazua}}\label{sec: equivalence}
 
In the two-dimensional setting, \cref{eq:evolution r} and \cref{eq:evolution r: functions} reduce to:

    \begin{equation}\label{eq: 2d nonconservative}
        \begin{split}
            {\partial_t \r(\t,t)} +  g(\t,t)  (\partial_\t \r(\t,t))  - \tilde{g}(\t,t)  {\r(\t,t)} - \hat{g}(\t,t)=0,  \quad  {r}(\cdot,0 )= {r_0}(\cdot ),
        \end{split}
    \end{equation}
    with
    \begin{equation}\label{eq: 2d nonconservative functions}
        \begin{split}
            g& \coloneqq  \frac{1}{[\un] \| \partial_\t \g^0 \|^2 } [f(\un)]\cdot (\partial_\t \g^0) , \\
            \tilde{g}& \coloneqq  \frac{n^0}{[\un]^2} \cdot \Big( [\un]\left[ f'(\un) \left( \nabla_x \un \cdot n^0 \right) \right] - \left[ f(\un)\right] \left( [\nabla_x \un] \cdot n^0 \right) \Big),\\
            \hat{g}& \coloneqq  \frac{n^0}{[\un]^2} \cdot \left( [\un] [f'(\un) {v}] - [f(\un)] [{v}] \right).
        \end{split}
    \end{equation}

    We give the proof for \cref{prop: lec_zuazua} using the \cref{lemma: equivalence} below.
    \begin{proof}[Proof of \cref{prop: lec_zuazua}]
The equivalence between \cref{eq:evolution v} and the corresponding
equation in \cite[Theorem 4.5]{LZ16} is straightforward. Hence, we
focus on the evolution equation for $\r$.

To prove the equivalence of \cite[Eq.~(4.17)]{LZ16} and \cref{eq: 2d nonconservative}, we first note that Lecaros and Zuazua consider the shock surface as the graph in space-time, which in
our notation is given by $(\g^0(\t,t),t) \in \R^3$ for $\t\in\R$ and
$t\in[0,T]$. For their choice of parametrization, the spatial velocity
$\partial_t \g^0$ is not required to be orthogonal to the spatial shock
curve. Hence, in general, it may contain a tangential component.
Therefore, terms of the form $\left( \partial_t \g^0 \right) \cdot \frac{\partial_\t \g^0}{ \| \partial_\t \g^0 \|}$ need not vanish in their framework. 

In our paper, we fix the parametrization freedom by choosing the time derivative of the parametrization to have no tangential component, i.e. 
\begin{equation*}
    \left( \partial_t \g^0 \right) \cdot \frac{\partial_\t \g^0}{ \| \partial_\t \g^0 \|} =0,
\end{equation*}
see \cref{eq: shockspeeds,eq: RH}.
Both choices are able to describe the same geometric object, namely a discontinuity curve separating the spatial domain into two subdomains. They only differ in the tangential parametrization of this curve.
Note also, that their spatial normal vector is oriented as $-n^0$, which reverses the signs of $\r$ and of the jumps $[\cdot]$.

    Applying \cref{lemma: equivalence} yields the equivalence between \cite[Eq.~(4.17)]{LZ16} and \cref{eq: 2d nonconservative}.
\end{proof}

    \begin{lemma}[Equivalence of Evolution Equations]\label{lemma: equivalence}
        Assume that the evolution of the parametrization of $\g^0$ is chosen such that
$\partial_t \g^0$ is purely normal, i.e. $\left( \partial_t \g^0 \right) \cdot \frac{\partial_\t \g^0}{ \| \partial_\t \g^0 \|}=0$. Then, \cite[Eq.~(4.17)]{LZ16} and \cref{eq: 2d nonconservative} are equivalent, i.e.
        \begin{equation}\label{eq: lemma eq}
            \begin{split}
                &\frac{1}{\| \partial_\t \g^0 \|} \left( \partial_\t \left( (-B) (-r) \right) + \partial_t \left( \| \partial_\t \g^0 \| (-[\un]) (-r) \right)\right) - \left( -[f'(\un) v],-[v] \right) \cdot \left( {-n^0}, {s^0} \right)=0\\
                    &\hspace{0.5cm}=
                    {\partial_t \r(\t,t)} +  g(\t,t)  (\partial_\t \r(\t,t))  - \tilde{g}(\t,t)  {\r(\t,t)} - \hat{g}(\t,t),
            \end{split}
        \end{equation}
        where we inserted \cite[Eq.~(4.37)]{LZ16} with $B=\left( \left( \partial_t\g^0,0 \right) \cdot (\tau^0,0) \right)[\un] +  ([f(\un)],0) \cdot ({\tau^0},0) $ and the spatial tangential ${\tau^0} := \frac{\partial_\t \g^0}{ \| \partial_\t \g^0 \|  }$.
    \end{lemma}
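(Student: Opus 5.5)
The plan is to expand the left-hand side of \cref{eq: lemma eq} term by term and match it against \cref{eq: 2d nonconservative} with the coefficients \cref{eq: 2d nonconservative functions}.

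\textbf{Step 1: simplify the signs and $B$.} The sign pairs $(-B)(-r)=Br$ and $(-[\un])(-r)=[\un]r$ cancel. The last term becomes $-\left([f'(\un)v]\cdot n^0 - [v]s^0\right)$, where $s^0=\frac{[f(\un)]}{[\un]}\cdot n^0$ by \cref{eq: shockspeeds}. Under the assumption $\partial_t\g^0\cdot\tau^0=0$, the first part of $B$ vanishes, so $B=[f(\un)]\cdot\tau^0$. Then
\[
\hat g=\tfrac{1}{[\un]}\left([f'(\un)v]\cdot n^0-[v]s^0\right),
\]
so the $v$-terms equal $-[\un]\hat g$. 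This shows that the left-hand side is naturally $[\un]$ times the right-hand side. I will therefore verify the identity up to this factor, which amounts to dividing the LZ equation by the nonzero jump.

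\textbf{Step 2: product rule.} The left-hand side expands to
\[
\frac{1}{\|\partial_\t\g^0\|}\left(B\,\partial_\t r + r\,\partial_\t B + \|\partial_\t\g^0\|[\un]\,\partial_t r + r\,\partial_t\big(\|\partial_\t\g^0\|[\un]\big)\right).
\]
\begin{itemize}
\item The $\partial_t r$ term gives $[\un]\partial_t r$.
\item The $\partial_\t r$ term gives $\frac{[f(\un)]\cdot\partial_\t\g^0}{\|\partial_\t\g^0\|^2}\,\partial_\t r=[\un]\,g\,\partial_\t r$.
\end{itemize}
It then remains to show that the zeroth-order coefficient in $r$ equals $-[\un]\tilde g$:
\[
\frac{1}{\|\partial_\t\g^0\|}\left(\partial_\t\big([f(\un)]\cdot\tau^0\big)+\partial_t\big(\|\partial_\t\g^0\|[\un]\big)\right)=-[\un]\tilde g .
\]

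\textbf{Step 3 (main obstacle): the zeroth-order identity.} Here I need derivatives of jumps along the moving curve:
\[
\partial_\t[\un]=[\nabla_x\un]\cdot\partial_\t\g^0,\qquad
\partial_t[\un]=[\partial_t\un]+[\nabla_x\un]\cdot\partial_t\g^0,
\]
and similarly for $[f(\un)]$. On each side of the shock I use the balance law $\partial_t\un=-f'(\un)\cdot\nabla_x\un+S(\un)$ with $S=0$.

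For the geometric terms I use $\partial_t\g^0=s^0n^0$ from \cref{eq: RH} together with the Frenet-type relations
\[
\partial_t\|\partial_\t\g^0\|=\tau^0\cdot\partial_\t(s^0n^0)=s^0\,\tau^0\cdot\partial_\t n^0=-s^0\,n^0\cdot\partial_\t\tau^0\,\|\partial_\t\g^0\|,
\]
and
\[
\partial_\t\tau^0=\frac{(n^0\cdot\partial^2_\t\g^0)\,n^0}{\|\partial_\t\g^0\|}.
\]

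Next I decompose $\nabla_x\un$ on each side into its $\tau^0$ and $n^0$ components. The tangential contributions from $\partial_\t[f(\un)]$ should then cancel against those from the time derivative of the jump, once the convective term $[\nabla_x\un]\cdot\partial_t\g^0=s^0[\nabla_x\un]\cdot n^0$ is taken into account. The curvature terms $s^0\,n^0\cdot\partial_\t\tau^0$ coming from $\partial_\t\tau^0$ and from $\partial_t\|\partial_\t\g^0\|$ should also cancel each other. What survives are the normal-derivative terms
\[
[f'(\un)(\nabla_x\un\cdot n^0)]\cdot n^0-s^0[\nabla_x\un\cdot n^0],
\]
which equal $-[\un]\tilde g$ up to the sign.

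This bookkeeping is where errors are likely. In particular, the LZ sign convention (normal $-n^0$) must be tracked carefully, as must the fact that the tangential part of $\partial_t\g^0$ is set to zero in $B$. If the $r$-coefficient does not match directly, I would recompute using the derivation of $\lambda$ in \cref{eq: lambda}: it yields exactly $\tilde g r$ plus a $\partial_\t r$ term, and that term serves as a consistency check.
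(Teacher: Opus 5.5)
Your plan follows the paper's proof. You cancel the LZ sign pairs, reduce $B$ to $[f(u^0)]\cdot\tau^0$, and divide by $[u^0]$ to match the $\partial_t r$, $\partial_\theta r$ and $v$-terms. You then prove the zeroth-order identity with the same auxiliary facts the paper uses: the time derivative of $\|\partial_\theta\gamma^0\|$, the product rule for $\partial_\theta B$, $\partial_\theta\tau^0\parallel n^0$, and the tangential/normal splitting of $\nabla_x u^0$ combined with the PDE inside $\partial_t[u^0]$.

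One formula is wrong and must be corrected. Differentiating $\tau^0\cdot n^0=0$ gives $\tau^0\cdot\partial_\theta n^0=-n^0\cdot\partial_\theta\tau^0$ with no extra factor. Hence $\partial_t\|\partial_\theta\gamma^0\|=-s^0\,n^0\cdot\partial_\theta\tau^0$; you can check this on a circle of radius $R\neq 1$ parametrized by angle. With your spurious factor $\|\partial_\theta\gamma^0\|$, the contribution $\frac{[u^0]}{\|\partial_\theta\gamma^0\|}\,\partial_t\|\partial_\theta\gamma^0\|$ would not cancel $\frac{1}{\|\partial_\theta\gamma^0\|}\,[f(u^0)]\cdot\partial_\theta\tau^0=\frac{s^0[u^0]}{\|\partial_\theta\gamma^0\|}\,n^0\cdot\partial_\theta\tau^0$. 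So the curvature cancellation you assert fails as written.

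Once this is fixed, the tangential pieces $\pm[f'(u^0)\cdot\tau^0\,\partial_{\tau^0}u^0]$ cancel. The convective term $s^0[\nabla_x u^0]\cdot n^0$ is purely normal and plays no part in that cancellation. What survives is $s^0[\nabla_x u^0\cdot n^0]-[f'(u^0)(\nabla_x u^0\cdot n^0)]\cdot n^0$. This is exactly $-[u^0]\tilde g$, the negative of the expression you wrote, so there is no ``up to the sign'' ambiguity left.
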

    \begin{proof}
        Note that when rewriting \cite[Eq.~(4.17)]{LZ16} in our notation, we need a factor $\frac{1}{\sqrt{1+(s^0)^2}}$, which we already canceled on the left-hand side of \cref{eq: lemma eq}.
        Since the time derivative is only dependent on the normal, we have that $\left( \partial_t \g^0,0 \right) \cdot \left( {\tau^0},0 \right)=0$, therefore $B= [f(\un)] \cdot {\tau^0} $. 
        We simplify the left-hand side of \cref{eq: lemma eq}: 
        \begin{equation*}
        \begin{split}
             \frac{1}{\| \partial_\t \g^0 \|} \left( \partial_\t \left( B r \right) + \partial_t \left( \| \partial_\t \g^0 \| [\un] r \right)\right) 
            = [f'(\un) v] \cdot n^0 - [v] s^0 .
        \end{split}
    \end{equation*}
    We can expand this to obtain 
    \begin{equation}\label{eq: inbetween comparison}
        \begin{split}
            \partial_t \r + \frac{B}{[\un]  \| \partial_\t \g^0 \|  } \partial_\t \r  = &-\frac{1}{[\un]  \| \partial_\t \g^0 \|}\left( (\partial_t [\un])  \| \partial_\t \g^0 \| + (\partial_t  \| \partial_\t \g^0 \|) [\un] + \partial_\t B \right) \r \\
            &+ \frac{[f'(\un) v] \cdot n^0 - [v] s^0}{[\un]}.
        \end{split}
    \end{equation}
    A short comparison of \cref{eq: inbetween comparison} with \cref{eq: 2d nonconservative} shows $\frac{B}{[\un]  \| \partial_\t \g^0 \|  } = g$ and  $\frac{[f'(\un) v] \cdot n^0 - [v] s^0}{[\un]}= \hat{g}$. Thus, we only need to show 
    \begin{equation}\label{eq: tildeg}
        \tilde{g}=-\frac{1}{[\un]  \| \partial_\t \g^0 \|}\left( (\partial_t [\un])  \| \partial_\t \g^0 \| + (\partial_t  \| \partial_\t \g^0 \|) [\un] + \partial_\t B \right) =: (\ast).
    \end{equation}
     For this purpose, we first show four auxiliary statements:
    \begin{enumerate}
        \item[$*_{(1)}$ :] $\partial_t  \| \partial_\t \g^0 \| \stackrel{\eqref{eq: RH}}{=} \frac{\partial_\t \g^0}{ \| \partial_\t \g^0 \|  } \cdot \partial_\t( s^0 n^0 ) = s^0 {\tau^0}  \cdot \left( \partial_\t n^0 \right) $ and since $\partial_\t n^0 = \rotmat \partial_\t {\tau^0}$, \cref{eq: shockspeeds}, and $a\cdot \left( \rotmat b \right) = -\left( \rotmat a \right) \cdot b $, we obtain $\partial_t  \| \partial_\t \g^0 \| = - \frac{[f(\un)] \cdot n^0 }{[\un]} \left( n^0 \cdot \left( \partial_\t {\tau^0} \right) \right)$
        \item[$*_{(2)}$ :] $\partial_\t B = (\partial_\t [f(\un)]) \cdot {\tau^0} + [f(\un)] \cdot \left( \partial_\t {\tau^0} \right) = {\tau^0} \cdot \left[f'(\un) \left( (\nabla_x \un) \cdot \left( \partial_\t \g^0 \right)  \right)\right] + [f(\un)] \cdot \left( \partial_\t {\tau^0} \right)$
        \item[$*_{(3)}$ :] $\partial_\t {\tau^0} = \kappa n^0$ for a $\kappa \in \R$, since $0=\partial_\t   \| {\tau^0} \|^2 = 2\left( {\tau^0} \cdot \left( \partial_\t {\tau^0} \right) \right)  $
        \item[$*_{(4)}$ :] We split the gradient of $\un$ into tangential and normal components with respect to $\g^0$: $\nabla_x \un = {\tau^0} \partial_{\tau^0} \un + n^0 \partial_{n^0} \un$, therefore, 
        \begin{enumerate}
            \item[$*_{(4.1)}$ :] $\partial_{n^0} \un = (\nabla_x \un) \cdot n^0$
            \item[$*_{(4.2)}$ :] with $\partial_t \un(\g^0(\t,t),t) = \partial_t \un + (\partial_t \g^0) \cdot (\nabla_x \un) \stackrel{\eqref{eq: IVP},\eqref{eq: RH}}{=} -f'(\un) \cdot \left( \nabla_x \un \right) + s^0 \underbrace{n^0 \cdot \left( \nabla_x \un \right)}_{\stackrel{*_{(4.1)}}{=} \partial_{n^0}\un}$, we obtain $\partial_t [\un] = -[f'(\un) \cdot {\tau^0} \partial_{\tau^0} \un ] + [s^0 \partial_{n^0} \un - f'(\un) \cdot n^0 \partial_{n^0}\un]$
        \end{enumerate}
    \end{enumerate}
    Now we can show that \cref{eq: tildeg} is true: 
    \begin{equation*}
        \begin{split}
            &(\ast)\stackrel{*_{(1)},*_{(2)}}{=} -\frac{1}{[\un]  \| \partial_\t \g^0 \|}\bigg( (\partial_t [\un])  \| \partial_\t \g^0 \| + \left( - \frac{[f(\un)] \cdot n^0 }{[\un]} \left( n^0 \cdot \left( \partial_\t {\tau^0} \right) \right) \right) [\un] \\
            &\hspace{4.2cm}+ {\tau^0} \cdot \left[f'(\un) \left( (\nabla_x \un) \cdot \left( \partial_\t \g^0 \right)  \right)\right] + [f(\un)] \cdot \left( \partial_\t {\tau^0} \right) \bigg) \\ 
            &= -\frac{1}{[\un]  \| \partial_\t \g^0 \|}\bigg( (\partial_t [\un])  \| \partial_\t \g^0 \| + {\tau^0} \cdot \left[f'(\un) \left( (\nabla_x \un) \cdot \left( \partial_\t \g^0 \right)  \right)\right]\\
            &\hspace{3.5cm}+ \underbrace{\left( [f(\un)] - n^0 \left( [f(\un)] \cdot n^0 \right) \right)\cdot \left( \partial_\t {\tau^0} \right)}_{\stackrel{*_{(3)}}{=}0} \bigg) \\ 
            &\!\stackrel{*_{(4.2)}}{=} -\frac{1}{[\un]  \| \partial_\t \g^0 \|}\bigg(  [s^0 \partial_{n^0} \un - f'(\un) \cdot n^0 \partial_{n^0}\un]\| \partial_\t \g^0 \| \\
            &\hspace{3.5cm}\underbrace{-[f'(\un) \cdot \left( \partial_\t \g^0 \right) \partial_{\tau^0} \un ] + {\tau^0} \cdot \big[f'(\un) \underbrace{\left( (\nabla_x \un) \cdot \left( \partial_\t \g^0 \right)  \right)}_{\stackrel{*_{(4)}}{=} {\tau^0} (\partial_{\tau^0} \un) \cdot \left( \partial_\t \g^0 \right) = \| \partial_\t \g^0 \|(\partial_{\tau^0} \un)}\big] }_{=0}\!\bigg) \\ 
            &= \frac{1}{[\un]}\left([f'(\un) \cdot n^0 \partial_{n^0}\un] - [s^0 \partial_{n^0} \un ] \right).
        \end{split}
    \end{equation*}
    Finally, using $*_{(4.1)}$ and \cref{eq: shockspeeds}, we arrive at 
    \begin{equation*}
        \frac{1}{[\un]}\left(\big[f'(\un) \cdot n^0 \left( (\nabla_x \un) \cdot n^0 \right)\big] - \frac{[f(\un)]\cdot n^0}{[\un]} ([\nabla_x \un ]\cdot n^0)\right) = \tilde{g}.
    \end{equation*}
    \end{proof}

\end{document}